\def\multiset#1#2{\ensuremath{\left(\kern-.3em\left(\genfrac{}{}{0pt}{}{#1}{#2}\right)\kern-.3em\right)}}
\newcommand{\midarrow}{\tikz \draw[-triangle 90] (0,0) -- +(.1,0);}
  \newcommand{\Ass}{\operatorname{Ass}}
 \newcommand{\supp}{\operatorname{supp}}
\newcommand{\rad}{\operatorname{rad}}
   \newcommand{\lcm}{\operatorname{lcm}}
\newcommand{\proset}{\,\mathrel{\lower 4pt\hbox{$\scriptscriptstyle/$}
\mkern -14mu\subseteq }\,} 
 \newtheorem{theorem}{Theorem}[section]
 \newtheorem{corollary}[theorem]{Corollary}
 \newtheorem{lemma}[theorem]{Lemma}
\newtheorem{notation}[theorem]{Notation}
 \theoremstyle{definition}
 \newtheorem{remark}[theorem]{Remark}
 \newtheorem{definition}[theorem]{Definition}
 \newtheorem{example}[theorem]{Example}
\title[Properties of symbolic powers of edge ideals of weighted oriented graphs] {Properties of symbolic powers of edge ideals of weighted oriented graphs}
\author[M. Mandal and D.K. Pradhan ]{Mousumi Mandal$^*$ and Dipak Kumar Pradhan }
 \thanks{$^*$ Supported by SERB(DST) grant No.: $\mbox{MTR}/2020/000429$, India}
\thanks{AMS Classification 2010: 05C22, 05C25, 05C38, 05E40}
\address{Department of Mathematics, Indian Institute of Technology Kharagpur, 721302, India} \email{mousumi@maths.iitkgp.ac.in}
\address{Department of Mathematics, Indian Institute of Technology Kharagpur, 721302, India}\email{dipakkumar@iitkgp.ac.in}    
\begin{document}
\maketitle

\begin{abstract}
Let $D$ be a weighted oriented graph and $I(D)$ be its edge ideal.
 We provide one method to find all the minimal generators of $ I_{\subseteq C} $,  where   $ C $ is a  maximal strong vertex cover of  $D$ and  $ I_{\subseteq C} $ is the intersections of irreducible ideals
associated to the strong vertex covers contained in  $C$.
If $ D^{\prime} $ is an induced digraph of $D$, under a certain condition on the strong vertex covers of $ D^{\prime} $ and $D$, we show that  $ {I(D^{\prime})}^{(s)} \neq {I(D^{\prime})}^s $ for some $s \geq 2$ implies $ {I(D)}^{(s)} \neq {I(D)}^s $.  We provide the necessary and sufficient condition for the equality of ordinary and symbolic powers of edge ideal of the union of two naturally oriented paths  with a common sink vertex.  We characterize  all the maximal strong vertex covers of $D$  such that at most one edge is oriented into each of its vertices  and $w(x) \geq 2$ if $\deg_D(x)\geq 2 $ for all $x \in V(D)$.
Finally, if $ D $ is a weighted  rooted tree with the
degree of root is $ 1 $ and $ w(x) \geq 2 $ when $ \deg_D(x) \geq  2 $ for all $ x \in V(D) $, we show that  $ {I(D)}^{(s)} = {I(D)}^s $ for all $s \geq 2$.

\noindent Keywords: Weighted oriented graph, induced digraph, edge ideal, symbolic power,
tree, path.
\end{abstract}

\section{Introduction}

 Let $k$ be a field and $R=k[x_1,\ldots ,x_n]$ be a polynomial ring in $n$ variables. Let $I$ be a homogeneous ideal of $R$. Then for $s\geq 1$, the $s$-th symbolic power of $I$ is defined as $I^{(s)}=\displaystyle{\bigcap_{P\in \Ass I}(I^sR_P\cap R)}.$ We refer  \cite{huneke} to the reader to survey some known results of symbolic powers of ideals. 
\vspace*{0.1cm}\\ 
 A weighted oriented graph  $ D $ with underlying graph  $ G $, is a triplet $ (V (D), E(D), w)$ whose
vertex set is $V(D) = V(G) $, edge set is $E(D)= \{ (x_i, x_j)~|~\{x_i, x_j \} \in E(G)  \}$ and the weight function $ w : V (D) \longrightarrow \mathbb N$. If $ e= (x,y) \in E(D) $, then $ x $ is the initial vertex and $ y $ is the terminal vertex of the directed edge    $ e $.      
The weight of a vertex $x_i\in V(D)$ is $w(x_i)$  denoted by $w_i$ or $ w_{x_i}.$ If a vertex $x_i$ of $D$ is a source (i.e., has only arrows leaving $x_i$), we fix $w_i = 1$. The edge ideal of $D$ is denoted by $I(D)$ and is defined as 
$I(D)=(x_ix_j^{w_j}|(x_i,x_j)\in E(D)).$  This ideal was first studied in \cite{gimenez,pitones}.   Recently in  \cite{grisalde}, the authors give the classification of some normally torsion-free edge ideals of weighted oriented  graphs, where the $s-$th symbolic power of $ I $ defined using the associated
minimal primes of $ I $.

The difficulty in the study of symbolic powers of edge ideals of weighted oriented graphs depends upon the structures of irreducible ideals associated to the strong vertex covers and the number of strong vertex covers. In  general, the number of  strong vertex covers of a weighted oriented graph  is greater  than   the number of minimal vertex covers of its underlying graph and it occurs due to the weights on its vertices and orientation of its edges. So the study of symbolic powers of edge ideals of weighted oriented graphs is harder than simple graphs. In this paper, we provide some methods to study the symbolic powers of their edge ideals.  
In \cite{mandal1}, we see that, if the set of all vertices of a weighted oriented graph  forms a  strong vertex cover,    all the ordinary and symbolic powers
of its edge ideal  coincide. Comparision of ordinary and symbolic powers has been done for several classes of weighted oriented graphs in  \cite{kanoy},  \cite{kanoy1}  and \cite{mandal2}. In all those papers, to compute the symbolic powers, the authors always find  the minimal generators of the intersections of irreducible ideals
associated to the strong vertex covers contained in a maximal strong vertex cover.  
 In this paper,  we  give a direct formula for that in  Theorem \ref{minimalgenerator}  and it works for any weighted oriented graph.   We  compare the ordinary and  symbolic powers of edge ideals of weighted oriented graphs  by studying   the ordinary and  symbolic powers of edge ideals of their  induced digraphs. In \cite{minh}, if $ H $ is an induced subgraph of $G$,  it is known that   $ {I(H)}^{(s)} \neq {I(H)}^s $ for some $s \geq 2$ implies $ {I(G)}^{(s)} \neq {I(G)}^s $. We extend this result to weighted oriented graphs. If $ D^{\prime} $ is an induced digraph of $D$, under a certain condition on the strong vertex covers of $ D^{\prime} $ and $D$, we show that  $ {I(D^{\prime})}^{(s)} \neq {I(D^{\prime})}^s $ for some $s \geq 2$ implies $ {I(D)}^{(s)} \neq {I(D)}^s $ (see  Theorem \ref{induced}). We apply this result to
 compare the ordinary and symbolic powers of edge ideals of weighted oriented paths (see Theorem \ref{inducedpath}). In Theorem \ref{path6r}, we give the necessary and sufficient condition for the equality of ordinary and symbolic powers of edge ideal of union of two naturally oriented paths  with a common sink.

  The main problem in the computation of symbolic power is to find all the maximal strong vertex covers. In  \cite[Lemma 47]{pitones}, 
  Pitones et al. proved  that  $ \{{x_{i_1}},\ldots,{x_{i_s}}\} $ is a vertex cover of $ D $ 	if $  I(D) \subseteq (x_{i_1}^{a_1},\ldots,x_{i_s}^{a_s})$.  We  identify that, if  $a_j = w({x_{i_j}})$ and  $s$ is the least possible value, then $ \{{x_{i_1}},\ldots,{x_{i_s}}\} $ is a maximal strong vertex cover of $ D $  (see Lemma \ref{maximal}). In Theorem \ref{maximal2}, we  prove the converse of Lemma \ref{maximal} is also true under the assumption  `` at most one edge is oriented into each   vertex of $D$  and $w(x) \geq 2$ if $\deg_D(x)\geq 2 $ for all $x \in V(D)$".    Recently in \cite{kanoy}, Banerjee et al. prove the equality of ordinary and symbolic powers of a certain class of weighted rooted trees. 
  In  Theorem \ref{tree},
 we extend this result to prove that ``
   if $ D $ is a weighted  rooted tree with
 degree of root is $ 1 $ and $ w(x) \geq 2 $ whenever $ \deg_D(x) \geq  2 $ for all $ x \in V(D) $, then  $ {I(D)}^{(s)} = {I(D)}^s $ for all $s \geq 2$".
 


\section{Preliminaries}

In this section, we provide some definitions and results for the weighted oriented graphs. By the result of Ha, Nguyen, Trung, and Trung in \cite{H.T-2016}, we may
assume that the underlying graph $ G $ of the weighted oriented graph $ D $ is
connected.

\begin{definition}
	A vertex cover $ C $ of $ D $ is a subset of $  V(D)  $ such that if $ (x, y) \in E(D) ,$ then
	$ x \in C $ or  $ y \in C . $ A vertex cover $ C $ of $ D $ is minimal if each proper subset of $ C $ is not a
	vertex cover of $ D. $ We set $(C)$ to be the ideal generated by the vertices of $C.$
\end{definition}
\begin{definition}
	Let $ x $ be a vertex of a weighted oriented graph $ D, $ then the sets $ N_D^+ (x) =	\{y ~|~ (x, y) \in E(D)\}  $ and  $ N_D^- (x) = \{y ~|~ (y, x) \in E(D)\}  $ are called the out-neighbourhood and the in-neighbourhood of $ x ,$ respectively. Moreover, the neighbourhood of $ x $ is the set $ N_D(x) = N_D^+ (x)\cup N_D^- (x) $ and denote $ N_D[x] = N_D (x)\cup \{x\} .$ Define $\deg_D(x) = |N_D(x)|$ for  $ x \in V(D) $ and $\deg_D(x)$ is known as the degree of the vertex $x$ in $D$. 
	A vertex $ x \in V(D) $ is called a source vertex if $N_D (x)= N_D^+ (x) .$ A vertex $ x \in V(D) $ is called a sink vertex if $N_D (x)= N_D^- (x) .$ We assume
	that the weights of source vertices are trivial.
	We set $V^+(D)$ as the set of vertices of $D$ with non-trivial weights.

\end{definition}
\begin{definition}	
	
	For $T \subset V(D) ,$ we
	define the  induced  digraph $D_T = (V( D_T), E(D_T), w)$ of $D$ on $T$ to be the
	weighted oriented graph such that $V (D_T) = T$ and for any
	$ u, v \in V (D_T),  $  $ (u,v) \in E(D_T)$  if and only if
	$(u,v) \in E(D)$.
	Here $ D_T = (V (D_T), E(D_T), w_T) $
	is a weighted oriented graph with the same orientation  as in $D$ and for any $ u  \in V (D_T), $
	if $ u $ is not a source in $ D_T, $ then its weight equals to the weight of $ u $ in $D,$ otherwise, its
	weight in $ D_T $ is $ 1. $ For a subset $W \subset V(D) $, we  define $ D \setminus  W $ to
	be the  induced digraph of $ D $ on $V(D) \setminus W$.

\end{definition}
\begin{definition}\cite[Definition 4]{pitones}
Let $ C $ be a vertex cover of a weighted oriented graph $ D$. We define the following three sets that form a partition of $ C $\vspace*{0.2cm}\\
\hspace*{3cm}$ L_1^D(C) = \{x \in C ~|~ N_D^+ (x) \cap C^c \neq        \phi \}, $ \vspace*{0.2cm}\\
\hspace*{2.85cm} $L_2^D(C) = \{x \in C ~|~x\notin L_1^D(C) ~\mbox{and}~  N_D^-(x) \cap C^c \neq   \phi \}$ and  \vspace*{0.2cm}\\ 
\hspace*{2.8cm}  $ L_3^D(C) = C \setminus (L_1^D(C) \cup L_2^D(C))$, \vspace*{0.2cm}\\
 where $ C^c $ is the complement of $  C ,$ i.e., $ C^c = V(D) \setminus C. $
\end{definition}
%
 
 \begin{lemma}\cite[Proposition 5]{pitones}\label{L3}  
 If   $ C $ is a vertex cover of $ D, $ then $ L_3^D(C) =\{x\in  C~|~N_D(x) \subset C\}.	 $  
 \end{lemma}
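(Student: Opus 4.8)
The plan is to prove the identity by a direct unwinding of the definitions of $L_1^D(C)$, $L_2^D(C)$, and $L_3^D(C)$, establishing the two inclusions separately. The only subtlety is that the clause ``$x \notin L_1^D(C)$'' is built into the definition of $L_2^D(C)$, so negating membership in $L_2^D(C)$ must be handled with that conditional in mind; everything else is routine set-theoretic bookkeeping with the complement $C^c = V(D) \setminus C$.

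First I would record the translations I intend to use. For a vertex $x \in C$, the condition $x \notin L_1^D(C)$ says $N_D^+(x) \cap C^c = \emptyset$, i.e.\ $N_D^+(x) \subseteq C$. Likewise, $N_D^-(x) \cap C^c = \emptyset$ is the same as $N_D^-(x) \subseteq C$. Since $N_D(x) = N_D^+(x) \cup N_D^-(x)$, having both the out- and in-neighbourhoods inside $C$ is exactly the condition $N_D(x) \subseteq C$ appearing on the right-hand side.

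For the inclusion $L_3^D(C) \subseteq \{x \in C \mid N_D(x) \subseteq C\}$, take $x \in L_3^D(C) = C \setminus (L_1^D(C) \cup L_2^D(C))$. Then $x \in C$, $x \notin L_1^D(C)$, and $x \notin L_2^D(C)$. From $x \notin L_1^D(C)$ I get $N_D^+(x) \subseteq C$. For the in-neighbourhood I use that $x \notin L_2^D(C)$ means the defining conjunction of $L_2^D(C)$ fails at $x$; since the first conjunct ``$x \notin L_1^D(C)$'' does hold, it must be the second conjunct that fails, giving $N_D^-(x) \cap C^c = \emptyset$, i.e.\ $N_D^-(x) \subseteq C$. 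Combining the two, $N_D(x) \subseteq C$.

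For the reverse inclusion, suppose $x \in C$ with $N_D(x) \subseteq C$. Then $N_D^+(x) \subseteq C$ forces $N_D^+(x) \cap C^c = \emptyset$, so $x \notin L_1^D(C)$; and $N_D^-(x) \subseteq C$ makes the second conjunct in the definition of $L_2^D(C)$ fail, so $x \notin L_2^D(C)$. Hence $x \in C \setminus (L_1^D(C) \cup L_2^D(C)) = L_3^D(C)$. I expect no genuine obstacle here: the entire argument reduces to the careful negation of the conditional defining $L_2^D(C)$, and the two inclusions mirror each other symmetrically.
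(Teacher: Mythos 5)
Your proof is correct: the two inclusions follow exactly as you argue, and you handle the one real subtlety properly, namely that negating membership in $L_2^D(C)$ under the standing hypothesis $x \notin L_1^D(C)$ forces $N_D^-(x) \cap C^c = \emptyset$. The paper itself states this lemma as a citation to Proposition 5 of Pitones--Reyes--Toledo without reproducing a proof, and your definitional unwinding is precisely the intended argument, so there is nothing to add.
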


\begin{definition}\cite[Definition 7]{pitones}
A vertex cover $ C $ of $ D $ is strong if for each $ x \in L_3^D(C)  $ there is $ (y, x) \in 
E(D) $ such that $ y \in L_2^D(C) \cup L_3^D(C)$ with $  y \in V^+(D)$   (i.e., $ w(y) \neq  1 $).   
\end{definition}

\begin{remark}\cite[Remark 8, Proposition 5]{pitones}\label{s.v.1}
	A vertex cover	$ C $ of $D$  is strong if and only if for each $ x \in L_3^D(C),$   we have $N_D^{-}(x) \cap V^+(D) \cap [C\setminus {L_1^D{(C)}} ] \neq \phi.$  
\end{remark}


\begin{definition}
A strong vertex cover $C$ of $D$ is said to be a maximal strong vertex cover of $D$ if it is not contained in any  other strong vertex cover of $D.$
\end{definition}



%
%

\begin{definition}\cite[ Definition 19]{pitones}\label{definition19}   
Let $ C $ be a vertex cover of $ D. $  The irreducible ideal associated to $ C $ is the ideal 
$ I_C :=(L_1^D(C)\cup \{x_j^{w(x_j)} |x_j \in  L_2^D(C) \cup  L_3^D(C)    \}).$	
\end{definition}

\begin{lemma}\cite[Lemma 20]{pitones}\label{edge}
	Let $ D $ be a  weighted oriented graph. Then $I(D) \subseteq I_C$, for each vertex cover $ C $ of $ D $.  	
\end{lemma}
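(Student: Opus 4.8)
The plan is to verify the containment at the level of generators. Since $I(D)$ is generated by the monomials $x_i x_j^{w_j}$ ranging over the edges $(x_i, x_j) \in E(D)$, it suffices to show that each such monomial lies in $I_C$; the containment of ideals then follows immediately. So I would fix an arbitrary edge $(x_i, x_j) \in E(D)$ and ask which generator of $I_C$ divides the associated monomial $x_i x_j^{w_j}$. Recalling Definition \ref{definition19}, the generators of $I_C$ are the linear forms $x$ for $x \in L_1^D(C)$ together with the pure powers $x^{w(x)}$ for $x \in L_2^D(C) \cup L_3^D(C)$, so the whole argument reduces to locating one endpoint of the edge inside the appropriate block of the partition $C = L_1^D(C) \cup L_2^D(C) \cup L_3^D(C)$.

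The natural case split is on whether the terminal vertex $x_j$ belongs to $C$. First suppose $x_j \in C$. Then $x_j$ lies in exactly one of the three blocks. If $x_j \in L_1^D(C)$, then $x_j$ itself is a generator of $I_C$ and clearly divides $x_i x_j^{w_j}$. If instead $x_j \in L_2^D(C) \cup L_3^D(C)$, then $x_j^{w(x_j)} = x_j^{w_j}$ is a generator of $I_C$, and since the exponent of $x_j$ in the monomial equals exactly its weight $w_j$, this generator divides $x_i x_j^{w_j}$. Either way the monomial lies in $I_C$.

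The remaining case is $x_j \notin C$. Because $C$ is a vertex cover and $(x_i, x_j) \in E(D)$, the other endpoint must satisfy $x_i \in C$. Moreover $x_j \in N_D^+(x_i)$ by the definition of the edge, while $x_j \in C^c$ by assumption, so $N_D^+(x_i) \cap C^c \neq \emptyset$; this is precisely the condition placing $x_i$ in $L_1^D(C)$. Hence $x_i$ is a generator of $I_C$ and divides $x_i x_j^{w_j}$. Combining the two cases gives $x_i x_j^{w_j} \in I_C$ for every edge, and therefore $I(D) \subseteq I_C$.

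I do not expect a genuine obstacle here; the one point that must be handled with care is the exponent matching in the second subcase, namely that the power $x_j^{w(x_j)}$ appearing among the generators of $I_C$ carries the same exponent $w_j$ as the terminal factor of the edge monomial $x_i x_j^{w_j}$. This agreement is exactly how the irreducible ideal $I_C$ is designed in Definition \ref{definition19}, and it is what makes the terminal vertex (rather than the initial vertex) the correct one to case on. Everything else is a direct unwinding of the definition of a vertex cover and of the sets $L_1^D(C)$, $L_2^D(C)$, $L_3^D(C)$.
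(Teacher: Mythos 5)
Your proof is correct. Note that the paper itself does not prove this lemma --- it is quoted from Pitones--Reyes--Toledo \cite[Lemma 20]{pitones} --- and your argument (reduce to generators $x_ix_j^{w_j}$, split on whether the terminal vertex $x_j$ lies in $C$, and use that $x_j\notin C$ forces $x_i\in L_1^D(C)$ by the vertex-cover property) is exactly the standard verification given in that source, with the exponent-matching point handled correctly.
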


The next lemma gives us the irreducible decomposition of the edge ideal of a  weighted oriented graph $ D $ in terms of irreducible ideals associated with the strong vertex covers of $D$.

\begin{lemma}\cite[Theorem 25, Remark 26]{pitones}\label{s.v.2}    
Let $ D $ be  a weighted oriented graph and $C_1,\ldots, C_s$ are the strong vertex covers of $ D ,$ then  the irredundant irreducible decomposition of $ I(D) $ is
    $$I(D) =  I_{C_1} \cap\cdots\cap I_{C_s} $$ 
where each $ I_{C_i} = ( L_1^D(C_i) \cup \{x_j^{w(x_j)}~|~x_j \in L_2^D(C_i) \cup L_3^D(C_i)\} ) ,$   $ \rad(I_{C_i})=P_i = (C_i)$.
\end{lemma}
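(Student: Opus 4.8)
The plan is to read this as a statement about the irreducible decomposition of the monomial ideal $I(D)=(x_ix_j^{w_j}\mid (x_i,x_j)\in E(D))$, which (being monomial) has a unique irredundant decomposition into irreducible monomial ideals. Two ingredients are immediate: each $I_{C_i}$ is generated by pure powers of distinct variables, hence is irreducible; and the variables occurring in its generators are exactly those of $L_1^D(C_i)\cup L_2^D(C_i)\cup L_3^D(C_i)=C_i$, so $\rad(I_{C_i})=(C_i)=P_i$. It therefore remains to establish the decomposition $I(D)=\bigcap_{C\ \mathrm{strong}} I_C$ together with its irredundancy.

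For the decomposition, the inclusion $\subseteq$ is exactly Lemma \ref{edge}. For the reverse inclusion I would argue directly with monomials. Given $m=\prod_i x_i^{b_i}\notin I(D)$, I would set $B=\{x_i\mid x_i^{w_i}\text{ divides }m\}=\{x_i\mid b_i\ge w_i\}$. If an edge $(x_i,x_j)$ had both endpoints in $B$, then $x_i\mid m$ and $x_j^{w_j}\mid m$, forcing $x_ix_j^{w_j}\mid m$ and $m\in I(D)$, a contradiction; hence $B$ is independent and $C_0:=V(D)\setminus B$ is a vertex cover. Then $m\notin I_{C_0}$: membership would require either some $x_j\in L_2^D(C_0)\cup L_3^D(C_0)$ with $b_j\ge w_j$ (impossible, since $x_j\in C_0$ means $b_j<w_j$) or some $x_i\in L_1^D(C_0)$ with $b_i\ge 1$ (impossible, since such $x_i$ has an out-neighbour $x_k\in C_0^c=B$, and $b_i\ge1$ together with $b_k\ge w_k$ would again give $m\in I(D)$). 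Thus every $m\notin I(D)$ lies outside some $I_{C_0}$.

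Next I would upgrade $C_0$ to a strong cover by pruning. Let $\mathcal{F}_m$ be the set of vertex covers $C$ with $m\notin I_C$, so $C_0\in\mathcal{F}_m$. If some $C\in\mathcal{F}_m$ is not strong, then by Remark \ref{s.v.1} there is $x\in L_3^D(C)$ with $N_D^-(x)\cap V^+(D)\cap[C\setminus L_1^D(C)]=\emptyset$; since $N_D(x)\subseteq C$ by Lemma \ref{L3}, the set $C':=C\setminus\{x\}\ne C$ is still a vertex cover. The key verification is that $C'\in\mathcal{F}_m$: only neighbours of $x$ can change $L$-type, and each in-neighbour $z$ of $x$ pushed into $L_1^D(C')$ satisfies $z\in L_1^D(C)$ or $w_z=1$ (precisely the failure of strongness at $x$), whence $b_z=0$ in either case, while each out-neighbour that lands in $L_2^D(C')$ already had $b_z<w_z$; so $m\notin I_{C'}$. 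Since $|C|$ strictly decreases and minimal vertex covers have $L_3^D(C)=\emptyset$ and are thus vacuously strong, iteration terminates at a strong cover in $\mathcal{F}_m$. This gives $\bigcap_{C\ \mathrm{strong}} I_C\subseteq I(D)$, and with Lemma \ref{edge} the decomposition follows.

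For irredundancy I would exhibit, for each strong cover $C$, a monomial lying in every other component but outside $I_C$; the natural candidate is $m_C=\prod_{x_j\in L_2^D(C)\cup L_3^D(C)} x_j^{w_j-1}$, which satisfies $m_C\notin I_C$ by construction. Showing $m_C\in I_{C'}$ for every strong $C'\ne C$ unwinds to the requirement that each such $C'$ meet $L_1^D(C')\cap(L_2^D(C)\cup L_3^D(C))\cap V^+(D)$. I expect this last point to be the main obstacle, since it is where the strong hypothesis is used in an essential (not merely sufficient) way: one must refine the choice of $m_C$ using, for each $x\in L_3^D(C)$, the defining in-neighbour $y\in L_2^D(C)\cup L_3^D(C)$ with $w(y)\ge2$, which is exactly what guarantees that dropping $(C)$ would enlarge the intersection; equivalently, it amounts to proving that every prime $(C)$ with $C$ strong is a genuine associated prime of $R/I(D)$.
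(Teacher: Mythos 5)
This lemma is imported verbatim from \cite[Theorem 25, Remark 26]{pitones}; the paper contains no proof of it, so your attempt has to be judged on its own merits rather than against an in-paper argument. The good news is that everything up to irredundancy is correct and complete. Each $I_C$ is generated by pure powers of distinct variables, hence irreducible, with $\rad(I_C)=(C)$ since $L_1^D(C)\cup L_2^D(C)\cup L_3^D(C)=C$; the inclusion $I(D)\subseteq\bigcap_i I_{C_i}$ is Lemma \ref{edge}; and your reverse inclusion works: for a monomial $m\notin I(D)$ the set $B=\{x_i\mid x_i^{w_i}\ \text{divides}\ m\}$ is independent, $C_0=V(D)\setminus B$ is a cover with $m\notin I_{C_0}$, and your pruning step is sound. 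Indeed, when you delete a vertex $x\in L_3^D(C)$ at which the SVC condition fails, the only generators of $I_{C'}$ that differ from those of $I_C$ are $x^{w_x}$ (removed) and $z^{w_z}$ replaced by $z$ for in-neighbours $z\in C\setminus L_1^D(C)$ of $x$; the failure of strongness at $x$ forces $w_z=1$ for exactly those $z$, so in fact $I_{C'}\subseteq I_C$ and $m\notin I_{C'}$. Termination at worst at a minimal (hence vacuously strong) cover then yields $I(D)=\bigcap_{C\ \mathrm{strong}}I_C$.

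The genuine gap is irredundancy, and your candidate witness fails already in the smallest example. Take $D$ to be the single edge $(x_1,x_2)$ with $w_2=2$: the strong covers are $\{x_1\}$ and $\{x_2\}$, with $I_{\{x_1\}}=(x_1)$ and $I_{\{x_2\}}=(x_2^2)$, and for $C=\{x_2\}$ your monomial is $m_C=x_2^{w_2-1}=x_2\notin(x_1)$, so it certifies nothing (irredundancy does hold here, witnessed instead by $x_1$ and $x_2^2$). You correctly diagnose that the missing statement is ``$(C)\in\Ass(R/I(D))$ for every strong cover $C$,'' but that is precisely the substantive half of the cited theorem and you leave it unproved. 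The repair is a small but essential enlargement of your candidate: take $m=\bigl(\prod_{y\notin C}y^{w_y}\bigr)\bigl(\prod_{x_j\in L_2^D(C)\cup L_3^D(C)}x_j^{w_j-1}\bigr)$ and prove $(I(D):m)=(C)$. For $x_i\in L_1^D(C)$ use its out-neighbour outside $C$, for $x_i\in L_2^D(C)$ its in-neighbour outside $C$, and for $x_i\in L_3^D(C)$ the in-neighbour in $V^+(D)\cap[L_2^D(C)\cup L_3^D(C)]$ furnished by strongness (the one place the hypothesis enters); conversely no generator $ab^{w_b}$ of $I(D)$ divides $um$ for $u$ supported outside $C$, because $L_1^D(C)$-variables do not occur in $um$, every $C$-variable occurs to order at most $w-1$, and $V(D)\setminus C$ is independent. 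Once each $(C_i)$ is an associated prime, irredundancy follows: dropping $I_{C_i}$ would give a primary decomposition forcing $\Ass(R/I(D))\subseteq\{(C_j)\}_{j\neq i}$, contradicting $(C_i)\in\Ass(R/I(D))$, since distinct strong covers generate distinct primes.
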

\begin{corollary}\cite[Remark 26]{pitones}\label{s.v.3}
	Let $ D $ be a weighted oriented graph. Then $ P $ is an associated 
	prime of $ I(D) $ if and only if $ P = (C) $ for some strong vertex cover $ C $ of $ D. $
\end{corollary}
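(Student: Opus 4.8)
The plan is to read the associated primes directly off the irredundant irreducible decomposition supplied by Lemma~\ref{s.v.2}, so that this corollary becomes an essentially immediate consequence. First I would recall the standard fact from commutative algebra that, for any \emph{minimal} (irredundant) primary decomposition $I = Q_1 \cap \cdots \cap Q_r$ of a monomial ideal $I$, the associated primes of $R/I$ are exactly the radicals $\rad(Q_1),\ldots,\rad(Q_r)$; here minimality means that no component may be omitted and the radicals are pairwise distinct. This set is moreover independent of the chosen decomposition, by the uniqueness of associated primes.

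Next I would check that the decomposition $I(D) = I_{C_1} \cap \cdots \cap I_{C_s}$ of Lemma~\ref{s.v.2}, taken over the strong vertex covers $C_1,\ldots,C_s$ of $D$, is indeed a minimal primary decomposition in this sense. Each component $I_{C_i} = (L_1^D(C_i) \cup \{x_j^{w(x_j)} \mid x_j \in L_2^D(C_i) \cup L_3^D(C_i)\})$ is generated by pure powers of distinct variables, hence is an irreducible monomial ideal; such an ideal is $P_i$-primary with $P_i = \rad(I_{C_i}) = (C_i)$, exactly as recorded in Lemma~\ref{s.v.2}. Because distinct strong vertex covers are distinct vertex subsets of $V(D)$, the primes $(C_i)$ are pairwise distinct, so no grouping by common radical is required and the decomposition is already minimal; Lemma~\ref{s.v.2} furthermore asserts its irredundancy.

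Combining these observations yields $\Ass(I(D)) = \{(C_1),\ldots,(C_s)\}$, the set of primes generated by the strong vertex covers of $D$. Therefore $P$ is an associated prime of $I(D)$ if and only if $P = (C)$ for some strong vertex cover $C$ of $D$, which is exactly the assertion.

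The only point requiring genuine care is confirming that an irreducible monomial ideal generated by pure powers of variables is primary with the stated radical, and that the displayed decomposition qualifies as minimal in the primary-decomposition sense; once this is in place the conclusion follows from the uniqueness of associated primes with no further computation. Since Lemma~\ref{s.v.2} already guarantees both the irredundancy of the decomposition and the identity $\rad(I_{C_i}) = (C_i)$, I do not anticipate a substantive obstacle here, and the remaining work is bookkeeping rather than a real difficulty.
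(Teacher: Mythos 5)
Your proof is correct and follows exactly the route the paper intends: the corollary is stated as an immediate consequence of Lemma \ref{s.v.2}, and reading the associated primes off that irredundant irreducible decomposition (after noting each $I_{C_i}$ is primary with radical $(C_i)$ and that distinct strong vertex covers give distinct primes) is precisely the implicit argument. Your care about grouping components with equal radicals is the right point to check, and it is handled correctly.
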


%


%
%
%

Let $ I \subset R$ and $ I = Q_1\cap \cdots \cap Q_m $ be a primary decomposition of ideal $I$. For $ P \in \Ass(R/I), $
we denote $ Q_{\subseteq P} $ to be the intersection of all $ Q_i $ with 
$ \sqrt{Q_i} \subseteq P.  $ If $C$ is a  strong vertex cover of a weighted oriented graph $ D $, then $(C)  \in \Ass(R/I(D))$. 
We denote   $ I_{\subseteq {C}} $ as  $ I_{\subseteq {(C)}} $. In the following lemma, we express the \cite[Theorem 3.7]{cooper} for  edge ideals of weighted oriented graphs.
\begin{lemma}\cite[Theorem 3.7]{cooper}\label{cooper} Let $I$ be the edge ideal of a weighted oriented graph $ D $ and $C_1,\ldots,C_r$ be the maximal strong vertex covers of $D.$ Then  $$I^{(s)}=(I_{\subseteq {C_1}})^s \cap\cdots\cap (I_{\subseteq {C_r}})^s.$$
\end{lemma}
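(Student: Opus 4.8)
The plan is to deduce the statement from the general symbolic-power formula of \cite[Theorem 3.7]{cooper} for monomial ideals, feeding in the description of the associated primes and the irreducible decomposition already recorded in Corollary \ref{s.v.3} and Lemma \ref{s.v.2}. By Lemma \ref{s.v.2} we may write $I = I(D) = \bigcap_{C} I_C$, the intersection of the irreducible ideals $I_C$ over the strong vertex covers $C$ of $D$, with $\rad(I_C) = (C)$; and by Corollary \ref{s.v.3} the associated primes of $R/I$ are exactly these primes $(C)$. Thus the defining intersection $I^{(s)} = \bigcap_{P \in \Ass(R/I)} (I^s R_P \cap R)$ runs over all strong vertex covers of $D$.

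First I would fix a strong vertex cover $C$, set $P = (C)$, and compute $I^s R_P \cap R$. Localizing at $P$ turns every component $I_{C'}$ with $(C') \not\subseteq P$ into the unit ideal, since such a component contains a power $x^{w(x)}$ of a variable $x \in C' \setminus C$, which is invertible in $R_P$; hence $I R_P = (I_{\subseteq C}) R_P$ and so $I^s R_P = (I_{\subseteq C})^s R_P$. The crux is then the contraction $(I_{\subseteq C})^s R_P \cap R = (I_{\subseteq C})^s$. This holds because every variable occurring in a generator of $I_{\subseteq C}$, hence in $(I_{\subseteq C})^s$, lies in $C$, so every (monomial) associated prime of $(I_{\subseteq C})^s$ is generated by a subset of $C$ and is therefore contained in $P$; consequently no primary component of $(I_{\subseteq C})^s$ is lost upon localizing at $P$. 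This identity $I^s R_P \cap R = (I_{\subseteq C})^s$ is precisely what \cite[Theorem 3.7]{cooper} provides, and I expect the control of $\Ass(R/(I_{\subseteq C})^s)$ under passage to powers to be the main technical point, even though it reduces here to bookkeeping about variable support.

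It remains to discard the redundant terms. If $C \subseteq C''$ are strong vertex covers, then $I_{\subseteq C''}$ is an intersection over a larger family of irreducible components than $I_{\subseteq C}$, so $I_{\subseteq C''} \subseteq I_{\subseteq C}$ and hence $(I_{\subseteq C''})^s \subseteq (I_{\subseteq C})^s$. Therefore, in $\bigcap_{C} (I_{\subseteq C})^s$ the term attached to any non-maximal cover $C$ contains the term attached to a maximal cover $C''$ with $C \subseteq C''$, and may be dropped. The surviving intersection is exactly over the maximal strong vertex covers $C_1, \ldots, C_r$, which yields $I^{(s)} = (I_{\subseteq C_1})^s \cap \cdots \cap (I_{\subseteq C_r})^s$.
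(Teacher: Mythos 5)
Your proposal is correct, but it does more than the paper does: the paper offers no proof of this lemma at all — it is imported verbatim as \cite[Theorem 3.7]{cooper}, specialized to $I(D)$ via Lemma \ref{s.v.2} and Corollary \ref{s.v.3}, which identify $\Ass(R/I(D))$ with the primes $(C)$ for $C$ a strong vertex cover, so that maximal associated primes correspond to maximal strong vertex covers. Your writeup carries out exactly that specialization but then also re-derives the content of the cited theorem: (i) localizing at $P=(C)$ kills every component $I_{C'}$ with $C' \not\subseteq C$, since such a component contains a power of a variable outside $P$, giving $I^sR_P=(I_{\subseteq C})^sR_P$; (ii) the contraction $(I_{\subseteq C})^sR_P\cap R=(I_{\subseteq C})^s$ holds because $(I_{\subseteq C})^s$ is generated by monomials supported on $C$, so all of its associated primes are monomial primes contained in $(C)$ — your ``bookkeeping'' claim is right, and can be made airtight by noting that if a monomial prime $\bigl((I_{\subseteq C})^s : m\bigr)$ contained a variable $x\notin C$, then divisibility of $xm$ by a generator supported on $C$ would force $m\in (I_{\subseteq C})^s$, a contradiction; (iii) the terms indexed by non-maximal strong covers are redundant, since $C\subseteq C''$ implies $(I_{\subseteq C''})^s\subseteq (I_{\subseteq C})^s$ and, by finiteness, every strong vertex cover lies inside a maximal one. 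So relative to this paper your route is genuinely more self-contained: it buys a standalone proof in the monomial setting, at the cost of reproducing an argument that \cite{cooper} already supplies in greater generality.
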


The following three lemmas are useful to get the necessary and sufficient condition for the equality of ordinary and symbolic powers of edge ideals of weighted oriented paths.

\begin{definition}
A path is said to be naturally oriented, if all of its edges are oriented in one direction.

\end{definition}
 
\begin{lemma}\cite[Lemma 3.8]{mandal2}\label{atmost}  
	Let $ D $ be a  weighted oriented graph such that at most one edge is oriented into each vertex.
	Let	$D^{\prime}$ be an induced weighted naturally oriented path  of length $3$ of $D$ with  $V(D^{\prime}) = \{x_{i-1},x_i,x_{i+1},x_{i+2}\}$, $E(D^{\prime}) = \{ (x_{j},x_{j+1}) ~|~ i-1 \leq j \leq  i+1 \}$, $w(x_i) \geq 2$ and $w(x_{i+1}) = 1$.
	Then $ {I(D)}^{(3)} \neq {I(D)}^3 $.           	
\end{lemma}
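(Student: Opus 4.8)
The plan is to exhibit one explicit monomial that lies in the symbolic cube but not in the ordinary cube, building it on the induced path $D^{\prime}$ and then transporting it to $D$. Throughout abbreviate $a=x_{i-1}$, $b=x_i$, $c=x_{i+1}$, $d=x_{i+2}$ and set $p=w(x_i)\ge 2$, $q=w(x_{i+2})\ge 1$; since $a$ is a source and $c$ has weight $1$ in $D^{\prime}$, we have $I(D^{\prime})=(ab^{p},bc,cd^{q})$.

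First I would list the strong vertex covers of $D^{\prime}$. Using Remark \ref{s.v.1} one checks that the only ones are the three minimal covers $C_1=\{b,c\}$, $C_2=\{b,d\}$ and $C_3=\{a,c\}$: any larger vertex cover $C$ puts some vertex into $L_3^{D^{\prime}}(C)$ with no admissible in-neighbour---$a$ has no in-neighbour at all, while the sole in-neighbour of $d$ is $c$ with $w(c)=1$---so the strong condition fails. These three covers are pairwise incomparable, hence each is simultaneously minimal and maximal, giving $I_{\subseteq C_j}=I_{C_j}$. By Definition \ref{definition19} and Lemma \ref{s.v.2}, $I_{C_1}=(c,b^{p})$, $I_{C_2}=(b,d^{q})$ and $I_{C_3}=(a,c)$, so Lemma \ref{cooper} yields
\[
I(D^{\prime})^{(3)}=(c,b^{p})^{3}\cap(b,d^{q})^{3}\cap(a,c)^{3}.
\]
I would then test $f=ab^{p}c^{2}d^{q}=x_{i-1}x_i^{w(x_i)}x_{i+1}^{2}x_{i+2}^{w(x_{i+2})}$. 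The monomials $c^{2}b^{p}\in(c,b^{p})^{3}$, $b^{2}d^{q}\in(b,d^{q})^{3}$ and $ac^{2}\in(a,c)^{3}$ each divide $f$ (using $p\ge 2$), so $f\in I(D^{\prime})^{(3)}$; on the other hand, writing exponents in the order $(a,b,c,d)$, any product of three generators of $I(D^{\prime})$ has exponent vector a sum of three of $(1,p,0,0),(0,1,1,0),(0,0,1,q)$, and a short check (using $p\ge 2$, $q\ge 1$) shows none of these is coordinatewise $\le(1,p,2,q)$, whence $f\notin I(D^{\prime})^{3}$.

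Next I would transport $f$ to $D$. Since $D^{\prime}$ is induced, the only edge generators of $I(D)$ supported on $\{a,b,c,d\}$ are the three edges of $D^{\prime}$; as monomials do not cancel, this gives $I(D)^{3}\cap k[V(D^{\prime})]=I(D^{\prime})^{3}$, and because $f\in k[V(D^{\prime})]\setminus I(D^{\prime})^{3}$ we conclude $f\notin I(D)^{3}$. For the symbolic side I would verify $f\in I(D)^{3}R_{(C)}\cap R$ for every strong vertex cover $C$ of $D$ (Corollary \ref{s.v.3}), i.e.\ that the $C$-restriction of $f$, obtained by deleting the variables outside $C$, is divisible by the restriction of some product of three edge generators of $I(D)$. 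The hypotheses are what make this manageable: ``at most one edge oriented into each vertex'' forces $N_D^{-}(b)=\{a\}$, $N_D^{-}(c)=\{b\}$ and $N_D^{-}(d)=\{c\}$, so every generator meeting $c$ carries $c$ only to the first power, while $w(c)=1$ means $d$ can never be certified inside $L_3^{D}(C)$ and hence, whenever $d\in C$, must have a neighbour outside $C$. A case analysis on $C\cap\{a,b,c,d\}$, using these forced outside-neighbours to supply the missing single-variable restricted generators, then exhibits in each case three restricted generators whose product divides $f|_{C}$, giving $f\in I(D)^{(3)}$.

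The main obstacle will be precisely this verification at every strong vertex cover of $D$: the divisibility must be arranged uniformly, and it is here that all three hypotheses $w(x_i)\ge 2$, $w(x_{i+1})=1$ and the in-degree restriction are indispensable---first to reduce the strong vertex covers of $D^{\prime}$ to $C_1,C_2,C_3$, and then to guarantee that localizing $I(D)$ at an arbitrary strong vertex cover reproduces, near the path, one of the irreducible ideals $(c,b^{p})$, $(b,d^{q})$, $(a,c)$ in whose cube $f$ already sits. Assembling $f\in I(D)^{(3)}$ with $f\notin I(D)^{3}$ then yields $I(D)^{(3)}\ne I(D)^{3}$, as desired.
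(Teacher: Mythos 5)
This lemma is imported by the paper from \cite{mandal2} (Lemma 3.8) and no proof of it appears in the present paper, so there is no in-paper argument to compare against; judged on its own terms, your proposal is correct, and it is the natural witness-monomial argument that the cited source also follows. Your determination of the strong vertex covers of $D^{\prime}$ (only the three minimal covers, by Remark \ref{s.v.1} and Lemma \ref{L3}), the identification $I_{\subseteq C_j}=I_{C_j}$, the membership $f=ab^{p}c^{2}d^{q}\in I(D^{\prime})^{(3)}$, the exponent-vector check that $f\notin I(D^{\prime})^{3}$, and the restriction argument $I(D)^{3}\cap k[V(D^{\prime})]=I(D^{\prime})^{3}$ (valid because $D^{\prime}$ is induced and the weights of $b,c,d$ agree in $D$ and $D^{\prime}$) are all sound.

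The one step you leave unexecuted is the case analysis showing $f\in I(D)^{(3)}$, i.e.\ that for every strong vertex cover $C$ of $D$ some product of three edge generators of $I(D)$ restricts (deleting variables outside $C$) to a divisor of $f|_{C}$. This is a genuine piece of work, but it does go through with exactly the ingredients you name, and in fact more uniformly than your sketch suggests: (i) if $d\notin C$, take $(ab^{p})(cd^{q})(cd^{q})$, whose restriction divides $f|_{C}$ in all sub-cases; (ii) if $d\in C$ and $c\notin C$, take $(bc)(bc)(cd^{q})$, restricting to $b^{2}d^{q}$, which divides $f|_{C}$ since $p\geq 2$; (iii) if $\{c,d\}\subseteq C$, then $d\notin L_{3}^{D}(C)$ --- here both hypotheses enter, since $N_{D}^{-}(d)=\{c\}$ by the in-degree restriction and $w(c)=1$ kills the SVC condition for $d$ --- so by Lemma \ref{L3} there is a forced out-neighbour $e\in N_{D}^{+}(d)\setminus C$, and $(bc)(bc)(de^{w(e)})$ restricts to a divisor of $f|_{C}$ (namely $b^{2}c^{2}d$ or $c^{2}d$ according as $b\in C$ or not), again using $p\geq 2$ and $q\geq 1$. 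So your outline is completable exactly as stated, and the proof is correct.
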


\begin{theorem}\cite[Theorem 3.6]{kanoy}\label{kanoy2}
	Let $ D $ be a weighted naturally oriented path with
	$ V(D) = \{x_1, x_2, x_3, \ldots , x_n\}$  and $E(D) = \{(x_i,x_{i+1}) ~|~ 1 \leq i \leq n-1 \}$.
	Then $ I(D)^{(s)} = I(D)^{s} $ for all $s \geq 2$ 
	if and only if it satisfies the condition ``if $w(x_j) \geq 2$ for some $1 < j < n$ then  $w(x_i) \geq 2$ for some $j \leq i \leq n-1$".  
	
\end{theorem}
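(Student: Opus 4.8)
The plan is to prove the two implications separately, using throughout that the inclusion $I(D)^{s}\subseteq I(D)^{(s)}$ always holds, so only $I(D)^{(s)}\subseteq I(D)^{s}$ (when the condition holds) and the strict inequality (when it fails) require work. Writing $g_i=x_ix_{i+1}^{w_{i+1}}$ for the generators, the first thing I would record is the combinatorial meaning of membership: a monomial $m=\prod_k x_k^{a_k}$ lies in $I(D)^{s}$ if and only if the integer program
\[
\sum_{i=1}^{n-1}c_i=s,\qquad c_i\ge 0,\qquad c_1\le a_1,\qquad c_k+w_k c_{k-1}\le a_k\ (2\le k\le n-1),\qquad w_n c_{n-1}\le a_n
\]
is feasible, where $c_i$ counts the uses of $g_i$. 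I shall also use the condition in its operative form: it is equivalent to the absence of any index $j$ with $2\le j\le n-2$, $w(x_j)\ge 2$ and $w(x_{j+1})=1$ (a heavy interior vertex immediately followed by a light interior vertex); equivalently, the interior vertices of weight $\ge 2$ form a (possibly empty) final block $\{x_t,\dots,x_{n-1}\}$ of the interior, while $w(x_1)=1$ and $w(x_n)$ is unconstrained.

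For the forward direction in contrapositive form (failure of the condition $\Rightarrow$ inequality for some $s$), suppose the forbidden pattern occurs, i.e. $w(x_j)\ge 2$ and $w(x_{j+1})=1$ for some $2\le j\le n-2$. Then $x_{j-1},x_j,x_{j+1},x_{j+2}$ span an induced naturally oriented path of length $3$ in $D$, and since every vertex of a naturally oriented path receives at most one incoming edge, Lemma \ref{atmost} applies directly and yields $I(D)^{(3)}\ne I(D)^{3}$; hence equality cannot hold for all $s\ge 2$. This disposes of one direction with no computation. Note also that for $n\le 3$ the interior is too short to contain the forbidden pattern, so the condition holds automatically and equality is immediate (the ideal is principal for $n=2$, and $n=3$ is a direct check); this supplies the base case of the other direction.

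For the converse I would pass from $I(D)^{(s)}$ to the maximal strong vertex covers. By Lemma \ref{cooper}, $I(D)^{(s)}=\bigcap_{C}(I_{\subseteq C})^{s}$ over the maximal strong covers $C$, and, unwinding the localization description $(I_{\subseteq C})^{s}=I(D)^{s}R_{(C)}\cap R$, membership $m\in (I_{\subseteq C})^{s}$ becomes feasibility of the \emph{relaxed} program obtained from the one displayed above by deleting the constraints indexed by those $k$ with $x_k\notin C$ (these amount to multiplying $m$ by variables that are units in $R_{(C)}$). Using Remark \ref{s.v.1} and Definition \ref{definition19}, or equivalently the formula of Theorem \ref{minimalgenerator}, I would check that under the suffix shape every maximal strong cover contains the tail $\{x_{n-1},x_n\}$ and meets the light prefix in an ordinary vertex cover of the shorter sub-path; this is the structural input supplied by the weight condition. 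The statement to prove then becomes a pure gluing assertion: \emph{if each relaxed program is feasible, then the full program is feasible.}

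The heart of the argument, and the step I expect to be hardest, is this gluing. I would establish it by Farkas/LP duality for the staircase system displayed above: infeasibility of the full program is witnessed by a minimal ``Hall-type'' obstruction, namely a consecutive block of budget constraints whose combined capacity cannot support $s$ generators. The crux is then to show that the support of every minimal obstruction lies inside the vertex set of a single maximal strong cover; granting this, that cover's relaxed program carries the same obstruction and is infeasible, contradicting the hypothesis and forcing $m\in I(D)^{s}$. The heavy-suffix condition is exactly what confines obstructions: the forbidden pattern (a light interior vertex with a further edge out of it) is precisely the configuration that would let an obstruction straddle two strong covers and escape all of them, and its absence keeps every obstruction localized. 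I anticipate that the genuine work is the explicit description of these minimal obstructions and the verification of the containment; as a fallback, the same conclusion can be organized as an induction on $n$ that peels the sink $x_n$ — which preserves the condition — and settles the single ``pendant weighted edge'' step directly.
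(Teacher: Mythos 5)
Two framing remarks before the substance. First, the paper contains no proof of this statement: it is imported verbatim from \cite[Theorem 3.6]{kanoy}, so your proposal can only be measured against the source's argument, whose shape is visible inside this paper in the proof of Theorem \ref{path5} — classify all maximal strong vertex covers of the naturally oriented path explicitly (the three families $\{C_{\alpha}\},\{C_{\beta}\},\{C_{\gamma}\}$), compute each $I_{\subseteq C}$ via Theorem \ref{minimalgenerator}, and prove $\bigcap_{C}(I_{\subseteq C})^{s}\subseteq I^{s}$ by induction on $s$ with an lcm/case analysis of the kind carried out in Theorem \ref{tree}. Second, you correctly repaired the statement: as printed, the condition with ``for some $j\le i\le n-1$'' is vacuous (take $i=j$); the intended quantifier in \cite{kanoy} is ``for all'', and your operative form (no $j$ with $2\le j\le n-2$, $w(x_j)\ge 2$, $w(x_{j+1})=1$; equivalently the heavy interior vertices form a suffix ending at $x_{n-1}$) is the right reading. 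Your contrapositive direction is then complete and correct: the induced path on $\{x_{j-1},x_j,x_{j+1},x_{j+2}\}$ satisfies the hypotheses of Lemma \ref{atmost} (a naturally oriented path has at most one edge into each vertex), which yields $I(D)^{(3)}\ne I(D)^{3}$ for $D$ itself — the same mechanism the paper uses in Lemma \ref{path5r}, and in fact cleaner, since you avoid the transfer through Theorem \ref{inducedpath}. Your reduction of the converse is also sound as far as it goes: since $I^{(s)}\subseteq I^{s}R_{(C)}\cap R$ by the paper's very definition of symbolic power, feasibility of each relaxed program follows for free, and you never actually need the unproved identity $(I_{\subseteq C})^{s}=I^{s}R_{(C)}\cap R$, only the trivial inclusion.

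The genuine gap is that the converse direction stops exactly where the theorem begins. Two announced steps are not performed. (i) The structural claim that under the suffix condition every maximal strong cover contains $\{x_{n-1},x_n\}$ and meets the light prefix in a vertex cover is left ``to be checked''; it is true, but it is precisely the classification quoted from \cite{kanoy} in the proof of Theorem \ref{path5}, with edge cases needing care ($C_{\alpha}$ omits $x_k$, $C_{\gamma}$ omits $x_{k+1}$, and the tails collide when the heavy block is short). (ii) The gluing lemma — every minimal Hall-type infeasibility certificate for your staircase program is supported inside the vertex set of a single maximal strong cover — \emph{is} the theorem: you give neither a description of the minimal obstructions nor a proof of the confinement, and the heavy-suffix hypothesis enters your sketch only as a heuristic (``the forbidden pattern is what lets obstructions straddle covers''), not as an ingredient of an argument. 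The fallback is no safer: peeling the sink does preserve the condition, but the converse fails — appending a trivially weighted sink to the good path with weights $(1,2,1)$ produces the bad path $(1,2,1,1)$ — so the pendant-edge step cannot be a formal consequence of the smaller case and must re-invoke the full strong-cover analysis; ``settled directly'' is exactly where the difficulty lives. In summary: one direction proved, the other a credible programme (LP-duality flavour, genuinely different from the generator-level induction of \cite{kanoy}) whose decisive lemma is missing.
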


\begin{lemma}\cite[Corollary 4.6]{mandal2}\label{cor3} 
Let $ D $ be a  weighted oriented graph. Let $ D^{\prime} $ be the  weighted oriented graph obtained from  $D$ after replacing  the non-trivial weights of sink vertices by
trivial weights.	Then     $ {{I(D)}^{(s)}} = {{I(D)}^{s}} $  if and only if $ {{I(D^{\prime})}^{(s)}} = {{I(D^{\prime})}^{s}} $  for each $s \geq 1$.    	
\end{lemma}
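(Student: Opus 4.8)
The plan is to exploit the fact that $D$ and $D'$ differ only in the weights of sink vertices, and that a sink variable always enters the edge ideal as a single pure power. Throughout, let $y_1,\dots,y_t$ be the sinks of $D$ with $w(y_k)\ge 2$; passing from $D$ to $D'$ replaces each $w(y_k)$ by $1$ and leaves everything else untouched. First I would show that $D$ and $D'$ have exactly the same strong vertex covers. Since $V(D)=V(D')$, the edges and the sets $N^{+},N^{-}$ agree, so for any vertex cover $C$ the partition $C=L_1(C)\cup L_2(C)\cup L_3(C)$ is the same whether computed in $D$ or in $D'$. The only combinatorial quantity that changes is $V^{+}$: a sink $y_k$ lies in $V^{+}(D)$ but not in $V^{+}(D')$. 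However, by Remark~\ref{s.v.1} the strong condition asks, for each $x\in L_3(C)$, for an in-neighbour $z\in N_D^{-}(x)\cap V^{+}\cap[C\setminus L_1(C)]$; any such witness $z$ has the out-edge $(z,x)$, hence is not a sink, hence has the same weight in $D$ and $D'$. Thus the strong condition is insensitive to the change, and $D,D'$ share the same strong vertex covers, the same maximal ones, and (by Corollary~\ref{s.v.3}) the same associated primes.

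Next I would linearise the weight change by a ring endomorphism. Let $\theta\colon R\to R$ be the $k$-algebra map with $\theta(y_k)=y_k^{\,w(y_k)}$ for $1\le k\le t$ and $\theta(x)=x$ for every other variable, and write $J^{e}:=\theta(J)R$ for the extension of an ideal along $\theta$. Because each $y_k$ is a sink it never occurs as an initial vertex, so in $I(D)$ it appears only through generators $z\,y_k^{\,w(y_k)}$ coming from edges $(z,y_k)$; in $I(D')$ the corresponding generator is $z\,y_k$, and $\theta$ carries $z\,y_k$ to $z\,y_k^{\,w(y_k)}$ while fixing all sink-free generators. Hence $I(D)=I(D')^{e}$. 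The same bookkeeping applies to the irreducible ideals: in $I_{C'}=(L_1(C')\cup\{x_j^{w(x_j)}:x_j\in L_2(C')\cup L_3(C')\})$ a sink $y_k\in C'$ can only sit in $L_2(C')\cup L_3(C')$ (again $N^{+}(y_k)=\varnothing$), contributing the pure power $y_k^{w(y_k)}$ in $D$ and $y_k$ in $D'$, while all other generators are sink-free; therefore $I_{C'}(D)=\bigl(I_{C'}(D')\bigr)^{e}$ for every strong vertex cover $C'$.

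Finally I would push these identities through Lemma~\ref{cooper}. The map $\theta$ makes $R$ a free module over its image $k[\dots,y_k^{w(y_k)},\dots]\cong R$ with basis the monomials $\prod_k y_k^{a_k}$, $0\le a_k<w(y_k)$, so $\theta$ is faithfully flat; consequently extension commutes with finite intersections and (being induced by a ring map) with powers, and $J^{ec}=J$. Intersecting the identity $I_{C'}(D)=(I_{C'}(D'))^{e}$ over the strong vertex covers contained in a maximal one gives $I(D)_{\subseteq C}=\bigl(I(D')_{\subseteq C}\bigr)^{e}$, and Lemma~\ref{cooper} then yields $I(D)^{(s)}=\bigcap_C (I(D)_{\subseteq C})^{s}=\bigl(\bigcap_C (I(D')_{\subseteq C})^{s}\bigr)^{e}=\bigl(I(D')^{(s)}\bigr)^{e}$, while $I(D)^{s}=\bigl(I(D')^{s}\bigr)^{e}$ is immediate. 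Since $J\mapsto J^{e}$ is injective on ideals, $I(D)^{(s)}=I(D)^{s}$ holds if and only if $(I(D')^{(s)})^{e}=(I(D')^{s})^{e}$, that is, if and only if $I(D')^{(s)}=I(D')^{s}$, for every $s\ge 1$. The main obstacle I anticipate is the careful justification of the flat-base-change package for the non-standard endomorphism $\theta$ --- faithful flatness of the simultaneous substitutions $y_k\mapsto y_k^{w(y_k)}$, its commuting with intersections, and injectivity on ideals --- together with making the generator bookkeeping airtight when several sinks are modified at once; everything else is formal once the strong vertex covers are seen to coincide.
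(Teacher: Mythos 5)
The paper itself never proves this lemma: it is quoted from \cite[Corollary 4.6]{mandal2} and used as a black box, so there is no in-paper argument to compare yours against; I can only judge your proof on its own, and it is correct and complete. Your three pillars all hold. (i) The partition $L_1,L_2,L_3$ of a vertex cover depends only on the orientation, and any witness $z$ for the SVC condition at $x\in L_3(C)$ has the outgoing edge $(z,x)$, hence is not a sink, hence lies in $V^+(D)$ if and only if it lies in $V^+(D')$; so $D$ and $D'$ have identical strong (and maximal strong) vertex covers. (ii) A sink never occurs as an initial vertex of an edge nor in $L_1$ of any cover, so it enters each generator of $I(D)$, and of each irreducible ideal $I_{C'}$, only as a pure power; consequently the substitution $\theta(y_k)=y_k^{w(y_k)}$ carries the generating sets of $I(D')$ and of each $I_{C'}(D')$ exactly onto those of $I(D)$ and $I_{C'}(D)$. (iii) $R$ is free over the image subring $k[\ldots,y_k^{w(y_k)},\ldots]$ with the monomial basis you name, hence faithfully flat, so extension along $\theta$ commutes with finite intersections, trivially commutes with powers, and satisfies $J^{ec}=J$; pushing this through Lemma \ref{cooper} (legitimate, since both graphs have the same maximal strong vertex covers, so the intersections are indexed identically) gives $I(D)^{(s)}=(I(D')^{(s)})^{e}$ and $I(D)^{s}=(I(D')^{s})^{e}$, and injectivity of extension yields the equivalence. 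The only gain a referee might ask for is to spell out that $I_{\subseteq C}$ is the intersection of the $I_{C'}$ over strong vertex covers $C'\subseteq C$ (via Lemma \ref{s.v.2}), which you use implicitly when intersecting; this is immediate from $\rad(I_{C'})=(C')$. Compared with the source paper's treatment (which handles sink weights by direct manipulation of monomial generators, one sink at a time), your flat-base-change formulation handles all sinks simultaneously and makes the ``if and only if'' formal, which is a genuine tidiness advantage.
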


%


\begin{notation}    
	Let $I \subset  k[x_1,\ldots,x_n]$ be a monomial ideal. We set $ \mathcal{G}(I) $ be the set of minimal generators of the ideal $I.$ If $J$ is a set of some elements of $I$, then $\langle J \rangle$  denoted as the  ideal generated by the elements of $J$.		
\end{notation}

%
%

\begin{notation}
	Let  $g \in  k[x_1,\ldots,x_n]$ be a monomial.  We define support of $g$ $=  \{x_i:x_i \divides g\}  $ and we denote it by $\supp(g).$

\end{notation}

\section{Symbolic powers of weighted oriented graphs}

  In this section,  we give one method to find all the minimal generators of $ I_{\subseteq C} $,  where   $ C $ is any  maximal strong vertex cover of a weighted oriented graph $D$.

Let	$ C $ be a vertex cover of $D$. We call that a vertex  $ x \in {L_3^D{(C)}}$ satisfies the SVC condition on $C$ if and only if $N_D^{-}(x) \cap V^+(D) \cap [{L_2^D{(C)}} \cup   {L_3^D{(C)}} ] \neq \phi.$  	By Remark \ref{s.v.1},  $ C $   is strong if and only if  each $ x \in L_3^D(C)$  satisfies the SVC condition on $C$. 

Next we give some  lemmas, which are  useful to prove our main results.

\begin{lemma}\label{svc1}
	Let $ D $ be  a  weighted  oriented  graph. Let $C_1$  and  $C_2$ be two vertex covers of $D  $ such that $C_1  \subset  C_2$. If $x \in L_3^D{(C_1)}$ satisfies the SVC condition on $C_1$, then  $x \in L_3^D{(C_2)}$ and it satisfies  the    SVC condition on $C_2$.        	
\end{lemma}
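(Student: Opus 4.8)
The plan is to argue directly from the definitions, using two standard facts: the characterization $L_3^D(C) = \{x \in C \mid N_D(x) \subset C\}$ from Lemma~\ref{L3}, and the observation that, since $L_1^D(C), L_2^D(C), L_3^D(C)$ partition $C$, one has $L_2^D(C) \cup L_3^D(C) = C \setminus L_1^D(C)$ for every vertex cover $C$. With these in hand both conclusions reduce to a short monotonicity check as $C_1$ is enlarged to $C_2$.

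First I would establish that $x \in L_3^D(C_2)$. Since $x \in L_3^D(C_1)$, Lemma~\ref{L3} gives $N_D(x) \subset C_1$, and $C_1 \subset C_2$ forces $N_D(x) \subset C_2$; as $x \in C_1 \subset C_2$ as well, applying Lemma~\ref{L3} to $C_2$ yields $x \in L_3^D(C_2)$. Next I would show that the SVC witness carries over verbatim. By hypothesis there is a vertex $y \in N_D^-(x) \cap V^+(D) \cap [L_2^D(C_1) \cup L_3^D(C_1)]$, and I claim this same $y$ witnesses the SVC condition on $C_2$. Indeed $y \in C_1 \subset C_2$, so $y \in C_2$; and $y \in L_2^D(C_1) \cup L_3^D(C_1) = C_1 \setminus L_1^D(C_1)$ means $N_D^+(y) \cap C_1^c = \emptyset$, i.e. $N_D^+(y) \subset C_1 \subset C_2$, so $N_D^+(y) \cap C_2^c = \emptyset$ and hence $y \notin L_1^D(C_2)$. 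Therefore $y \in C_2 \setminus L_1^D(C_2) = L_2^D(C_2) \cup L_3^D(C_2)$, while the conditions $y \in N_D^-(x)$ and $y \in V^+(D)$ are unchanged. Thus $y \in N_D^-(x) \cap V^+(D) \cap [L_2^D(C_2) \cup L_3^D(C_2)] \neq \emptyset$, so $x$ satisfies the SVC condition on $C_2$.

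The only real point to watch is the monotonicity of membership in $L_1^D$ under enlarging the cover: passing from $C_1$ to the larger $C_2$ can only move out-neighbours inside the cover, so a vertex whose out-neighbours already lie in $C_1$ cannot acquire an out-neighbour in $C_2^c$. This is precisely what keeps the chosen witness $y$ out of $L_1^D(C_2)$, and hence in $L_2^D(C_2) \cup L_3^D(C_2)$, and it is the crux of the argument; everything else is a direct translation of the definitions of $L_3^D$ and the SVC condition.
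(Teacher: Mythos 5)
Your proof is correct and follows essentially the same route as the paper's: both establish $x \in L_3^D(C_2)$ via Lemma \ref{L3}, then reuse the same witness $y$, arguing that $y \notin L_1^D(C_1)$ gives $N_D^+(y) \cap C_1^c = \emptyset$, hence $N_D^+(y) \cap C_2^c = \emptyset$ and $y \notin L_1^D(C_2)$, so $y$ still witnesses the SVC condition on $C_2$. The only difference is that you spell out the partition identity $L_2^D(C) \cup L_3^D(C) = C \setminus L_1^D(C)$ and the membership $y \in C_2$, which the paper leaves implicit.
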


\begin{proof}
Since  $x \in L_3^D{(C_1)}$, by Lemma \ref{L3}, we have $x \in L_3^D{(C_2)}$. Given that $ x \in {L_3^D{(C_1)}}$ satisfies the SVC condition on $C_1$. That means there exists  $y \in  N_D^{-}(x) \cap V^+(D) \cap [{L_2^D{(C_1)}} \cup   {L_3^D{(C_1)}} ] .$ Here $y \notin  {L_1^D{(C_1)}}$ implies $N_D^{+}(y) \cap  C_1^c = \phi $. Since   $C_1  \subset  C_2$, we have  $N_D^{+}(y) \cap  C_2^c = \phi $ and so   $y \notin  {L_1^D{(C_2)}}$. Then  $y \in  N_D^{-}(x) \cap V^+(D) \cap [{L_2^D{(C_2)}} \cup   {L_3^D{(C_2)}} ] .$  Hence  $x$ satisfies the SVC condition on $C_2$.   	
\end{proof}

\begin{lemma}\label{svc2}
	Let $ D $ be  a  weighted  oriented  graph. Let $C$ be a  vertex cover of $D.$ Then there exists a strong vertex cover $C^{\prime} \subseteq C$  of $D$ such that there is no strong vertex cover $C^{\prime\prime}  \subseteq C$ of $D$ with $C^{\prime}  \subsetneq C^{\prime\prime}$.     	
\end{lemma}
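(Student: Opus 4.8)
The plan is to realize the family of strong vertex covers contained in $C$ as a nonempty, finite poset under inclusion, so that it automatically possesses a maximal element; the only substantive work is guaranteeing that this family is nonempty.

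First I would record the simple but decisive fact that every \emph{minimal} vertex cover of $D$ is strong. Indeed, if $C_0$ is a minimal vertex cover and $x \in C_0$, then $C_0 \setminus \{x\}$ is not a vertex cover, so some edge of $D$ is covered only by $x$; that edge has its other endpoint outside $C_0$, whence $N_D(x) \not\subseteq C_0$. By Lemma \ref{L3} this gives $x \notin L_3^D(C_0)$, and since $x$ was arbitrary we conclude $L_3^D(C_0) = \phi$. A vertex cover with empty $L_3^D$ satisfies the SVC condition vacuously, so by Remark \ref{s.v.1} it is strong.

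Next, starting from the given vertex cover $C$, I would pass to a minimal vertex cover $C_0 \subseteq C$ by deleting vertices one at a time while the result remains a vertex cover; since $V(D)$ is finite this process terminates at a minimal vertex cover contained in $C$. By the previous paragraph $C_0$ is strong, so the collection
$$\mathcal{S} = \{\, C^{\prime\prime} : C^{\prime\prime} \text{ is a strong vertex cover of } D \text{ with } C^{\prime\prime} \subseteq C \,\}$$
is nonempty. As $\mathcal{S}$ consists of subsets of the finite set $V(D)$, it is itself finite, and hence contains an element $C^{\prime}$ that is maximal with respect to inclusion. By maximality there is no strong vertex cover $C^{\prime\prime}$ with $C^{\prime} \subsetneq C^{\prime\prime} \subseteq C$, which is precisely the claim.

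Since the whole argument reduces to finiteness of $\mathcal{S}$, there is no genuine obstacle; the single point one must get right is the nonemptiness of $\mathcal{S}$, i.e.\ producing at least one strong vertex cover inside $C$. The cleanest route is the observation that minimality of a vertex cover forces $L_3^D = \phi$, rendering it vacuously strong. Note that Lemma \ref{svc1} is not actually needed for this existence statement, though it would be the relevant tool if one instead wished to enlarge a given strong subcover while staying inside $C$.
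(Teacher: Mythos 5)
Your proof is correct, and it reaches the lemma by a genuinely different and more economical route than the paper. You observe that any minimal vertex cover $C_0 \subseteq C$ has $L_3^D(C_0) = \phi$ (minimality forces every $x \in C_0$ to have a neighbour outside $C_0$), hence is vacuously strong; so the family $\mathcal{S}$ of strong vertex covers contained in $C$ is nonempty, and being finite it has a maximal element, which is exactly what the lemma asserts. The paper instead runs an explicit deletion process: it removes, one at a time, vertices of $L_3^D(C)$ that fail the SVC condition on $C$, shows that the resulting $C' = C \setminus \{x_{j_1},\ldots,x_{j_m}\}$ is strong, and uses Lemma \ref{svc1} to rule out any strong $C'' \subseteq C$ strictly containing $C'$. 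The trade-off is that the paper's construction produces a maximal element with an additional property: every vertex of $C \setminus C'$ fails the SVC condition on $C$. This is strictly stronger than maximality. For instance, take $D$ with edges $(u,x)$ and $(x,v)$, $w(u) \geq 2$, $w(x)=w(v)=1$, and $C = \{u,x,v\}$: then $\{u,v\}$ is a maximal element of $\mathcal{S}$ (the only larger cover inside $C$ is $C$ itself, which is not strong), yet the discarded vertex $x$ \emph{does} satisfy the SVC condition on $C$, witnessed by $u$. It is precisely the stronger property of the construction, not the statement of the lemma, that the paper invokes at the start of the proof of Lemma \ref{svc5} (``there exists a strong vertex cover $C' \subseteq C$ of $D$ whose each element of $C \setminus C'$ does not satisfy the SVC condition on $C$''), and in the example above the maximal element $\{u,v\}$ indeed violates the conclusion of Lemma \ref{svc5}, since $u \notin L_1^D(C)$ and $w(u) \neq 1$ but $u \in L_1^D(\{u,v\})$. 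So your argument is a clean, correct proof of Lemma \ref{svc2} as stated --- and it rightly identifies nonemptiness of $\mathcal{S}$ as the only substantive point --- but it could not be substituted into the paper wholesale: one would have to either reprove Lemma \ref{svc5} by a different argument or restate Lemma \ref{svc2} so that it records the extra conclusion that the paper's deletion construction actually provides.
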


\begin{proof}
Let $C$ be a  vertex cover of $D$. Let $J $ $\subseteq {L_3^D{(C)}}$ be the set of vertices which does not satisfy the SVC condition on $C$.
If $J= \phi $, then we take  $C^{\prime} = C$. Now we assume $J  \neq \phi $. Let $J = \{x_{l_1},\ldots,x_{l_r}\}$. Choose one element $x_{j_1} \in J  \cap {L_3^D{(C)}}$ and set $C_1= C \setminus \{x_{j_1}\}$. By Remark \ref{L3}, $N_D[x_{j_1}] \subseteq C$ and  so $C_1$ is a vertex cover of $D$. Now, we suppose that there are vertex
covers $C_0,\ldots,C_k$ such that $C_i= C_{i-1} \setminus \{x_{j_i}\}$ and $x_{j_i} \in J  \cap {L_3^D{(C_{i-1})}}   $ for $i=1,\ldots,k$, where $C_0 = C$ and we give the following recursively process: If $ J  \cap {L_3^D{(C_{i-1})}} = \phi,$ then we take $C^{\prime} = C_{i-1}$. Since $|V(D)|$ is finite, the process is finite. Hence there exists $m  \leq   r$ such that $ J  \cap {L_3^D{(C_{m})}} = \phi.$ Then we take $C^{\prime} = C_{m} = C \setminus \{x_{j_1},\ldots, x_{j_m} \} $, where $ \{x_{j_1},\ldots, x_{j_m} \}  \subseteq J$.
\vspace*{0.2cm}\\
Now we claim that $C^{\prime}$ is strong. Let $x_{p} \in {L_3^D{(C^{\prime})}}  $.
 By Lemma \ref{L3},  $x_{p} \in {L_3^D{(C)}}  $   because   $C^{\prime}  \subseteq C$.  Since  $ J  \cap {L_3^D{(C^{\prime})}} = \phi$, we have $x_{p} \notin J  $. 
Then $x_{p} $  satisfies the SVC condition on $C$. Thus there is some  $y_{p} \in N_D^{-}(x_{p}) \cap V^+(D) \cap [{L_2^D{(C)}} \cup   {L_3^D{(C)}} ].$  Note that  $y_{p} \notin  L_1^D{(C)}.$   Suppose $y_{p} \in  L_1^D{(C^{\prime})}.$ That means  $x_{j_i} \in  N_D^{+}(y_{p})  \cap {C^{\prime}}^c   $ for some $i  \in [m]$. Now  $y_{p} \in N_D^{-}(x_{j_i}) \cap V^+(D) \cap [{L_2^D{(C)}} \cup   {L_3^D{(C)}} ]$ and so $x_{j_i} \in {L_3^D{(C)}}  $  satisfies the SVC condition on $C$, which is a contradiction. Therefore $y_{p} \notin  L_1^D{(C^{\prime})}$ and  $y_{p} \in N_D^{-}(x_{p}) \cap V^+(D) \cap [{L_2^D{(C^{\prime})}} \cup   {L_3^D{(C^{\prime})}} ]$, i.e., 
$x_{p} \in {L_3^D{(C^{\prime})}}  $  satisfies the SVC condition on $C^{\prime}$. Hence $C^{\prime}$ is strong.
\vspace*{0.1cm}\\
Suppose  there exists a strong vertex cover $C^{\prime\prime}  \subseteq C$  such that $C^{\prime}  \subsetneq C^{\prime\prime}$. This implies  
  $x_{j_i}  \in     C^{\prime\prime}$ for some $i \in [m]$.  Since $x_{j_i} \notin C^{\prime}  $, we have $N_D(x_{j_i}) \subset C^{\prime}  $.
By Lemma \ref{L3},  $x_{j_i} \in {L_3^D{(C^{\prime\prime})}}  $  and  it  satisfies the SVC condition on $C^{\prime\prime}$ because $C^{\prime\prime}$ is strong.  By Lemma \ref{svc1},  $x_{j_i} \in {L_3^D{(C)}}  $ and it   satisfies the SVC condition on $C$, which is a contradiction.  Hence the proof follows.  
\end{proof}    

\begin{lemma}\label{svc5}
	Let $ D $ be  a  weighted  oriented  graph. Let $C$ be a  vertex cover of $D$ . Then there exists a strong vertex cover $C^{\prime} \subseteq C$  of $D$, where  $x \notin  L_1^{D}(C)$ with $w(x)  \neq 1$  implies $x \notin   L_1^{D}(C^{\prime}) $.           	
\end{lemma}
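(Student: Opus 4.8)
The plan is to take the strong vertex cover $C'$ produced in the proof of Lemma \ref{svc2} and simply verify that it already enjoys the extra $L_1^D$-property demanded here; no fresh construction is needed. Recall that there one obtains $C' = C \setminus \{x_{j_1}, \ldots, x_{j_m}\}$, where every deleted vertex $x_{j_i}$ lies in the set $J \subseteq L_3^D(C)$ of vertices that fail the SVC condition on $C$. The one fact I will exploit is precisely this failure: for each $i$, since $x_{j_i} \in J$, we have $N_D^{-}(x_{j_i}) \cap V^+(D) \cap [L_2^D(C) \cup L_3^D(C)] = \phi$. I will also use the elementary identity ${C'}^c = C^c \cup \{x_{j_1}, \ldots, x_{j_m}\}$, a disjoint union since the removed vertices all lay in $C$.

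From here I would argue by contradiction. Suppose some vertex $x$ satisfies $w(x) \neq 1$ and $x \notin L_1^D(C)$, yet $x \in L_1^D(C')$. From $x \in L_1^D(C') \subseteq C' \subseteq C$ we get $x \in C$, and then $x \notin L_1^D(C)$ forces $x \in L_2^D(C) \cup L_3^D(C)$. On the other hand, $x \notin L_1^D(C)$ means $N_D^{+}(x) \cap C^c = \phi$, while $x \in L_1^D(C')$ means $N_D^{+}(x) \cap {C'}^c \neq \phi$; combining these with the identity for ${C'}^c$, the only way an out-neighbour of $x$ can leave $C'$ is through a deleted vertex, so $N_D^{+}(x) \cap \{x_{j_1}, \ldots, x_{j_m}\} \neq \phi$. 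Picking such an index, say $(x, x_{j_i}) \in E(D)$, gives $x \in N_D^{-}(x_{j_i})$. Since $w(x) \neq 1$ puts $x \in V^+(D)$ and we already have $x \in L_2^D(C) \cup L_3^D(C)$, this places $x$ in $N_D^{-}(x_{j_i}) \cap V^+(D) \cap [L_2^D(C) \cup L_3^D(C)]$, contradicting the emptiness recorded above. Hence no such $x$ exists, and $C'$ is as required. Note that the case $x \notin C$ never even arises, since $x \in L_1^D(C')$ already implies $x \in C' \subseteq C$.

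The only real subtlety — and the reason the statement holds — is the choice of the witnessing cover: one must delete only vertices that fail the SVC condition on the \emph{original} $C$, so that the SVC-failure is available with respect to $C$ itself rather than with respect to some intermediate $C_{i-1}$. This is exactly what the recursive construction in Lemma \ref{svc2} guarantees, since each $x_{j_i}$ is chosen from $J$. Consequently I do not expect any genuine obstacle: the entire proof reduces to recognizing that the hypothetical out-edge $(x, x_{j_i})$ would itself certify the SVC condition for the deleted neighbour $x_{j_i}$, which is forbidden.
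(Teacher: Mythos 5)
Your proposal is correct and follows essentially the same route as the paper's own proof: both invoke the construction inside Lemma \ref{svc2} (deleting only vertices of $C$ that fail the SVC condition on $C$ itself) and then derive a contradiction by observing that a hypothetical out-edge $(x, x_{j_i})$ with $x \in V^+(D) \cap [L_2^D(C) \cup L_3^D(C)]$ would certify the SVC condition for the deleted vertex $x_{j_i}$. Your write-up is in fact slightly more explicit than the paper's (spelling out ${C'}^c = C^c \cup \{x_{j_1},\ldots,x_{j_m}\}$ and the step $x \in C' \subseteq C$, which the paper leaves implicit), but the underlying argument is identical.
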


\begin{proof}
By Lemma \ref{svc2}, there exists a strong
vertex cover $C^{\prime} \subset C$ of D whose each element of $  C \setminus  C^{\prime}$ does not satisfy the
SVC condition on $ C $.	We may assume that $C^{\prime} = C \setminus \{x_{1},\ldots, x_{m} \} $ of $D$.  Let  $x \notin  L_1^{D}(C)$ with $w(x)  \neq 1$.     Suppose $x \in  L_1^{D}(C^{\prime}) $. That means  $x_{j} \in  N_D^{+}(x)  \cap {C^{\prime}}^c   $  for some $j  \in [m]$. Then  $x \in N_D^{-}(x_{j}) \cap V^+(D) \cap [{L_2^D{(C)}} \cup   {L_3^D{(C)}} ]$ and so $x_{j} \in {L_3^D{(C)}}  $  satisfies the SVC condition on $C$, which is a contradiction. Hence $x \notin   L_1^{D}(C^{\prime}) $.
\end{proof}

\begin{corollary}\label{svc3}
	Let $ D $ be  a  weighted  oriented  graph. Let $C$ be a  vertex cover of $D$ with $x_i \notin  L_3^{D}(C) $. Then there exists a strong vertex cover $C^{\prime} \subseteq C$  of $D$ such that $x_i \in C^{\prime}$.
\end{corollary}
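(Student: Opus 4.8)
The plan is to reuse the \emph{explicit construction} carried out in the proof of Lemma \ref{svc2}, rather than merely its statement, since the statement alone records only that some maximal strong vertex cover $C' \subseteq C$ exists and says nothing about which vertices survive. Recall that, starting from $C_0 = C$, that construction produces a strong vertex cover $C'$ by successively deleting vertices $x_{j_1}, \ldots, x_{j_m}$, where at the $i$-th stage $x_{j_i}$ is chosen from $J \cap L_3^D(C_{i-1})$ and $J \subseteq L_3^D(C)$ is the subset of $L_3^D(C)$ consisting of those vertices that fail the SVC condition on $C$. In particular the final cover is $C' = C \setminus \{x_{j_1}, \ldots, x_{j_m}\}$ with $\{x_{j_1}, \ldots, x_{j_m}\} \subseteq J \subseteq L_3^D(C)$, so that $C \setminus C' \subseteq L_3^D(C)$.

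First I would observe that the hypothesis $x_i \notin L_3^D(C)$ forces $x_i \notin J$, because $J$ is by definition contained in $L_3^D(C)$. Consequently $x_i$ can never occur among the deleted vertices $\{x_{j_1}, \ldots, x_{j_m}\}$, and therefore $x_i \in C'$. Since Lemma \ref{svc2} already certifies that this $C'$ is a strong vertex cover with $C' \subseteq C$, this $C'$ is exactly the strong vertex cover contained in $C$ and containing $x_i$ that the corollary asks for. (Here I am reading the hypothesis in the only way that makes the conclusion $x_i \in C' \subseteq C$ possible, namely $x_i \in C$, so that $x_i \in L_1^D(C) \cup L_2^D(C)$.)

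The main point requiring care — and the closest thing to an obstacle — is to rule out the possibility that $x_i$ gets deleted at a \emph{later} stage of the iteration, even though the membership of a vertex in $L_3^D(\,\cdot\,)$ can change as vertices are removed from the cover. This is settled purely by the bookkeeping of the construction: the pool of vertices eligible for deletion is fixed once and for all as the set $J$ determined by the original cover $C$, and at every stage one deletes only elements of $J \cap L_3^D(C_{i-1}) \subseteq J$. Hence no vertex lying outside $J$ is ever removed, regardless of how the sets $L_3^D(C_{i-1})$ evolve, and $x_i \notin J$ is preserved throughout. Thus the corollary falls out of the proof of Lemma \ref{svc2} with essentially no additional work, the only real content being the identity $C \setminus C' \subseteq L_3^D(C)$ recorded above.
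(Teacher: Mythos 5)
Your proof is correct, but it takes a genuinely different route from the paper's. The paper never re-enters the proof of Lemma \ref{svc2}: it takes \emph{any} strong vertex cover $C^{\prime} \subseteq C$ supplied by that lemma and argues directly that $x_i$ must lie in it. Indeed, since $x_i \in C$ and $x_i \notin L_3^{D}(C)$, Lemma \ref{L3} gives $N_D(x_i) \nsubseteq C$, hence $N_D(x_i) \nsubseteq C^{\prime}$; choosing a neighbour $y$ of $x_i$ with $y \notin C^{\prime}$, the edge joining $x_i$ and $y$ must be covered by $C^{\prime}$, which forces $x_i \in C^{\prime}$. That argument is more modular --- it uses only the \emph{statement} of Lemma \ref{svc2} --- and it proves something stronger: $x_i$ belongs to \emph{every} vertex cover of $D$ contained in $C$, not just one. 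Your argument instead opens up the construction inside the proof of Lemma \ref{svc2} and observes that the deleted vertices all come from $J \subseteq L_3^{D}(C)$, so $C \setminus C^{\prime} \subseteq L_3^{D}(C)$ and $x_i$ survives; the bookkeeping point you flag (the eligible pool is fixed once and for all as $J$, so no later stage can delete $x_i$ even though the sets $L_3^{D}(C_{i-1})$ evolve) is exactly right, and the identity $C \setminus C^{\prime} \subseteq L_3^{D}(C)$ is a correct and potentially useful by-product. The trade-off is fragility: your proof depends on the internals of another proof rather than on its statement, so any reformulation of the construction in Lemma \ref{svc2} would force a recheck, whereas the paper's neighbourhood argument is shorter and self-contained. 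Both proofs share the same implicit reading of the hypothesis, namely $x_i \in C$, which you correctly make explicit.
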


\begin{proof}
	By Lemma \ref{svc2},  there exists a strong vertex cover $C^{\prime} \subseteq C$  of $D$.
	By Lemma \ref{L3}, $N_D(x_i)  \nsubseteq  C$  and so $N_D(x_i)  \nsubseteq  C^{\prime}$.  Since  $C^{\prime}  $  is a  vertex cover  of $D$,  we have  $x_i \in  C^{\prime}$.    
\end{proof}

We are now ready for the
main result of this section which describes the minimal generators of $ I_{\subseteq C} $ for a
maximal strong vertex cover $ C $.

 \begin{theorem}\label{minimalgenerator}
	Let $ D $ be a  weighted oriented graph on the vertex set $\{x_1,\ldots,x_n\}$. Let $I = I(D)$ and  $w_i = w(x_i)$ for all $x_i \in V(D)$.  Let $ C $ be a  maximal strong vertex cover of $D$. Then
	$ I_{\subseteq C}  = (L_1^D(C))  + (x_i^{w_i} ~|~ x_i \in  L_2^D(C)     ) + (x_ix_j^{w_j} ~|~ (x_i,x_j) \in E(D),~ x_i \in 
	L_2^{D}(C) \cup 	L_3^{D}(C) ~\mbox{and}~ x_j \in 
	L_3^{D}(C)  )   $.

\end{theorem}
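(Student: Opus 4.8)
The plan is to bypass the intersection $\bigcap_{C'\subseteq C} I_{C'}$ and instead recognize $I_{\subseteq C}$ as a monomial localization. By the definition of $I_{\subseteq C}$ together with the irredundant decomposition of Lemma \ref{s.v.2}, we have $I_{\subseteq C}=\bigcap_{C'} I_{C'}$, the intersection over the strong vertex covers $C'\subseteq C$ (these are exactly the components with $\sqrt{I_{C'}}=(C')\subseteq (C)$). Standard primary decomposition theory then identifies this intersection with a contraction: $I_{\subseteq C}=I(D)R_{(C)}\cap R$, the localization of $I(D)$ at the monomial prime $(C)=(x_i\mid x_i\in C)$ contracted back to $R$. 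Strongness and maximality of $C$ are used only to ensure $(C)\in\Ass(R/I(D))$; the resulting formula needs only that $C$ is a vertex cover.

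First I would record the general shape of such a contraction. For a monomial ideal $I$ and a monomial prime $(C)$, the ideal $I R_{(C)}\cap R$ is generated by the restrictions $m_{|C}$ of the minimal generators $m\in\mathcal{G}(I)$, where $m_{|C}$ is obtained from $m$ by deleting every variable outside $C$. This is a one-line divisibility check: a monomial $g$ lies in $IR_{(C)}\cap R$ if and only if some generator $m$ divides $g$ times a monomial supported on $C^c$, i.e.\ if and only if $m_{|C}\mid g$. Next I would apply this to $\mathcal{G}(I(D))=\{x_ix_j^{w_j}\mid (x_i,x_j)\in E(D)\}$. Because $C$ is a vertex cover, every edge meets $C$, so each restriction $(x_ix_j^{w_j})_{|C}$ equals $x_ix_j^{w_j}$ when both endpoints lie in $C$, equals $x_i$ when $x_j\notin C$, and equals $x_j^{w_j}$ when $x_i\notin C$ (never $1$). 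Reading these three outcomes against the partition $C=L_1^D(C)\cup L_2^D(C)\cup L_3^D(C)$: the variables $x_i$ occur precisely for $x_i\in L_1^D(C)$; a pure power $x_j^{w_j}$ is produced exactly when $x_j$ has an in-neighbour outside $C$, and it is non-redundant exactly when moreover $x_j\notin L_1^D(C)$, i.e.\ $x_j\in L_2^D(C)$; and among the two-endpoint products $x_ix_j^{w_j}$, those with $x_i\in L_1^D(C)$ or with $x_j\in L_1^D(C)\cup L_2^D(C)$ are divisible by a variable $x_i$ or by a power $x_j^{w_j}$ already listed, hence redundant, leaving exactly the edges with $x_j\in L_3^D(C)$ and $x_i\in L_2^D(C)\cup L_3^D(C)$. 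Assembling the three families yields precisely the asserted description of $I_{\subseteq C}$.

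For completeness I would also sketch a second route staying inside the authors' combinatorial framework, in case the localization identity is to be avoided. Write $J$ for the ideal on the right-hand side of the statement. The inclusion $J\subseteq I_{\subseteq C}$ is routine: the products $x_ix_j^{w_j}$ are edge generators, hence lie in every $I_{C'}$ by Lemma \ref{edge}, while for the $L_1^D(C)$-variables and the $L_2^D(C)$-powers the vertex-cover property forces the relevant vertex into $C'$ and into the appropriate $L_k^D(C')$. For $I_{\subseteq C}\subseteq J$ one reduces, via $I_{\subseteq C}\subseteq I_C$, to a monomial $f$ divisible by some $x_j^{w_j}$ with $x_j\in L_3^D(C)$ but $f\notin J$, sets $S=\{x_j\in L_3^D(C)\mid x_j^{w_j}\mid f\}$, notes that $N_D^-(x_j)\cap\supp(f)=\emptyset$ for $x_j\in S$ (otherwise an edge generator divides $f$) so that $S$ spans no edge and $C\setminus S$ is still a vertex cover, then extracts a strong $C'\subseteq C\setminus S$ from Lemma \ref{svc2} and tries to show $f\notin I_{C'}$.

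The hard part of this second route is controlling the extra vertices that Lemma \ref{svc2} deletes when passing from $C\setminus S$ to $C'$: a deleted vertex $z$ may lie in $L_3^D(C)\setminus S$ and turn some $x_i\in\supp(f)$ into an $L_1^D(C')$-generator dividing $f$, while the matching edge generator $x_iz^{w_z}$ need not divide $f$ since $z^{w_z}\nmid f$. Tracking the successive svc-removals precisely is exactly the difficulty that the localization argument avoids, so I would present the localization proof as the main one and keep the combinatorial version only as a cross-check.
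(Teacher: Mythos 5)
Your localization proof is correct, and it takes a genuinely different route from the paper's own argument. The key identification $I_{\subseteq C}=I(D)R_{(C)}\cap R$ is legitimate: the ideals $I_{C'}$ in Lemma \ref{s.v.2} are irreducible monomial ideals, hence primary, so localizing that decomposition at the monomial prime $(C)$ kills exactly the components with $C'\nsubseteq C$ and fixes the rest, giving $I(D)R_{(C)}\cap R=\bigcap_{C'\subseteq C}I_{C'}=I_{\subseteq C}$. Your divisibility computation of the contraction (restrict each generator $x_ix_j^{w_j}$ to the variables in $C$) and the subsequent $L_1/L_2/L_3$ bookkeeping are also correct; the one caveat is that your final list need not be a \emph{minimal} generating set (e.g., if $x_i\in L_2^D(C)$ has $w_i=1$, the edge monomial $x_ix_j^{w_j}$ is still redundant, being divisible by $x_i^{w_i}=x_i$), but this is harmless because the theorem asserts only an equality of ideals. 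The paper instead argues entirely inside its strong-vertex-cover framework: it shows each monomial on the right-hand side lies in $I_{C'}$ for every strong $C'\subseteq C$ by explicitly constructing suitable strong covers (Corollary \ref{svc3}, Lemma \ref{svc5}), and then excludes minimal generators supported on three or more vertices, or on two non-adjacent vertices, by exhibiting a strong $C'\subseteq C$ whose irreducible ideal misses the candidate monomial. Notably, the obstruction you flag in your combinatorial cross-check --- a vertex deleted when passing to a strong subcover may convert some $x_i\in\supp(f)$ into an $L_1^D(C')$-generator --- is precisely what the paper's Lemma \ref{svc5} is designed to control (for vertices of non-trivial weight), so your second sketch completed by that lemma is essentially the paper's proof. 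What each approach buys: your route is shorter, relies only on standard facts about localization of primary decompositions and monomial ideals, and in fact establishes the formula for an arbitrary strong vertex cover $C$, since maximality is used only to guarantee $(C)\in\Ass(R/I(D))$, which holds for every strong cover by Corollary \ref{s.v.3}; the paper's route is self-contained in its combinatorial machinery, produces the auxiliary Lemmas \ref{svc1}, \ref{svc2}, \ref{svc3}, \ref{svc5} that are reused later (e.g., in Theorem \ref{induced}), and identifies the actual minimal generators rather than just a generating set.
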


\begin{proof}  
Suppose $x_i \in L_1^{D}(C)$.  Then for any strong vertex cover $C^{\prime} \subseteq C,$ $ N_D^+ (x_i) \cap {C^{\prime}}^c \neq        \phi $   and so $x_i \in L_1^{D}(C^{\prime}).$ Thus $ L_1^{D}(C)  \subseteq     L_1^D(C^{\prime})  $  and hence $   (L_1^D(C))  \subseteq       I_{\subseteq C}  $.  Note that each element of $ L_1^D(C) $  is a minimal generator of $I_{\subseteq C}$.
\vspace*{0.1cm}\\
Suppose $x_i \in L_2^{D}(C)$.
Then for any strong vertex cover $C^{\prime} \subseteq C,$ $ N_D^- (x_i) \cap {C^{\prime}}^c \neq        \phi $   and so $x_i \in L_1^{D}(C^{\prime})\cup L_2^{D}(C^{\prime})$.  Thus  $L_2^{D}(C) \subseteq L_1^{D}(C^{\prime})\cup L_2^{D}(C^{\prime})$  and hence $(x_i^{w_i} ~|~ x_i \in  L_2^D(C)     ) \subseteq       I_{\subseteq C}   $. Notice that that each element of the set $ \{x_i^{w_i} ~|~ x_i \in  L_2^D(C)        \} $  is a minimal generator of $I_{\subseteq C}$.
\vspace*{0.1cm}\\
Suppose $(x_i,x_j)  \in E(D)$ where $x_j \in L_3^{D}(C)$. By Lemma \ref{L3}, $N_D[x_j] \subseteq C$ and so $C_1  =  C \setminus \{x_j\}$  is a vertex cover of $D$.  By Lemma \ref{L3}, $x_i  \notin  L_3^D(C_1)$    and hence by  Corollary \ref{svc3}, there exists a strong vertex cover $C^{\prime} \subseteq C_1$ 
such that $ x_i \in C^{\prime}.$ Here  $x_{j} \in  N_D^{+}(x_i)  \cap {C^{\prime}}^c   $. Thus $x_i \in L_1^{D}(C^{\prime})$ and so $x_i \in \mathcal{G}(I_{C^{\prime} })$. Note that $x_i \notin  I_{C} $,  $x_j^{w_j} \in \mathcal{G}(I_{C})$  and  $x_j^{w_j}  \notin     I_{C^{\prime} }$.
\vspace*{0.1cm}\\
Then $x_ix_j^{w_j} \in \mathcal{G}(I_{C} \cap I_{C^{\prime} }).$ By Lemma \ref{edge}, for any  strong vertex cover $C^{\prime\prime} \subseteq C$,   $x_ix_j^{w_j} \in I_{C^{\prime\prime} }$ and hence  $x_ix_j^{w_j} \in \mathcal{G}(I_{\subseteq C}).$
If $w_j \neq 1$, we have $x_ix_j \notin I_{\subseteq C}$ and so $ x_ix_j^{w_j} $  is the only minimal generator of $I_{\subseteq C}$, which  involves both $x_i$ and $x_j$.  If $w_j = 1$, $ x_ix_j $  is the only minimal generator of $I_{\subseteq C}$, which  involves both $x_i$ and $x_j$.
\vspace*{0.1cm}\\
Suppose $x_i \in L_3^{D}(C)$. Suppose $(x_i,x_j)  \in E(D)$ where $x_j \in L_3^{D}(C)$.
By the previous argument, $C_1  =  C \setminus \{x_j\}$  and   $C_2  =  C \setminus \{x_i\}$  are  vertex covers of $D$.     By Lemma \ref{L3}, $x_i  \notin  L_3^D(C_1)$. Thus by Corollary \ref{svc3}, there exists a strong vertex cover $C^{\prime} \subseteq C_1$ 
such that $ x_i \in C^{\prime}.$ Here  
$x_{j} \in  N_D^{+}(x_i)  \cap {C^{\prime}}^c   $ and so $x_i \in L_1^{D}(C^{\prime})$. Thus $x_i \in \mathcal{G}(I_{C^{\prime} })$. 
By Lemma \ref{L3}, $  N_D(x_j)  \subset  C$.  Since $(x_i,x_j)  \in E(D)$, we have $ N_D^+ (x_j) \cap  {C_2}^c  =  \phi$ and $x_i \in N_D^- (x_j) \cap  {C_2}^c$. Then   $x_j \in L_2^{D}(C_2)$.  Here $x_j \notin L_1^{D}(C_2)  \cup  L_3^{D}(C_2)  $. By Corollary \ref{svc3}, there exists a strong vertex cover $C^{\prime\prime} \subseteq C_2$  such that $ x_j \in C^{\prime\prime}$. If $w_j = 1$,  then
$x_j \in \mathcal{G}(I_{C^{\prime\prime} })$.    
If $w_j \neq 1$,  by Lemma \ref{svc5}, we get $x_j \notin L_1^{D}(C^{\prime\prime})$ and so  
$x_j^{w_j} \in \mathcal{G}(I_{C^{\prime\prime} })$.  In both cases $x_j^{w_j} \in \mathcal{G}(I_{C^{\prime\prime} })$. Note that $x_i \in \mathcal{G}(I_{C^{\prime} }),x_i \notin  I_{C^{\prime\prime} } $  and  $x_j^{w_j}  \notin     I_{C^{\prime} }$.    Then  $x_ix_j^{w_j} \in \mathcal{G}(I_{C^{\prime} } \cap I_{C^{\prime\prime} }).$
By Lemma \ref{edge}, for any  strong vertex cover $C^{\prime\prime\prime} \subseteq C$,   $x_ix_j^{w_j} \in I_{C^{\prime\prime\prime}}$ and hence  $x_ix_j^{w_j} \in \mathcal{G}(I_{\subseteq C}).$
Note that there is no other   minimal generator of $I_{\subseteq C}$, which  involves both $x_i$ and $x_j$.

Hence  
$  (L_1^D(C))  + (x_i^{w_i} ~|~ x_i \in  L_2^D(C)     ) + (x_ix_j^{w_j} ~|~ (x_i,x_j) \in E(D),~ x_i \in 
L_2^{D}(C) \cup 	L_3^{D}(C) ~\mbox{and}~ x_j \in 
L_3^{D}(C)  )  \subseteq  I_{\subseteq C}   $.
\vspace*{0.1cm}\\
To complete the proof, it is enough to prove the following two statements:
  
$ (1) $ There  is no minimal generator  of  $I_{\subseteq C}$,  which  involves more than two vertices.	

$ (2) $ There  is no minimal generator  of  $I_{\subseteq C}$,  which  involves  two non-adjacent vertices.

$ (1) $  Suppose there  exists a minimal generator $f$ of $I_{\subseteq C}$,  which  involves more than two vertices.  Since $f$ is   minimal, we can assume that no element of $\supp(f)$ $\in$ $L_1^{D}(C)$  and  no element of $\supp(f)$ with trivial weight $\in$ $L_2^{D}(C)$.        
Let $f=x_1^{a_1} \cdots x_r^{a_r} y_1^{b_1} \cdots y_s^{b_s} z_1^{c_1} \cdots z_t^{c_t}$ where $\{x_1,\ldots,x_r\} \subseteq L_2^{D}(C)$,  $\{y_1,\ldots,y_s,z_1,\ldots,z_t \} \subseteq L_3^{D}(C)$, $a_i = 1 $  or $ w_{x_i}  $  for $1 \leq i \leq r$, $b_i = 1$ or $ w_{y_i} $  for $1 \leq i \leq s$, $c_i = 1 $ or $ w_{z_i} $  for $1 \leq i \leq t$  and $r+s+t \geq 3$.
Without loss of generality we can assume that  $b_i = 1$ with $w_{y_i} \neq 1$   for $1 \leq i \leq s$  and  $c_i = w_{z_i}$ for $1 \leq i \leq t$. By our assumption   $w_{x_i} \neq 1$ and    so $x_i^{w_{x_i}} \in \mathcal{G}(I_{\subseteq C})$  for $1 \leq i \leq r$. Since $f$ is minimal,  $a_i = 1$  for $1 \leq i \leq r$. Hence $f=x_1\cdots x_r y_1\cdots y_s z_1^{w_{z_1}} \cdots z_t^{w_{z_t}}$. If $t=0,$ $f \notin I_C$,  which is a contradiction.  Now we assume $t  \neq    0.$
If two  $z_i$'s (say $z_k$ and $z_l$) are adjacent,  $z_kz_l^{w_{z_l}}$ or  $z_lz_k^{w_{z_k}} \in \mathcal{G}(I_{\subseteq C})$ and so  $f$ is not minimal. Therefore no two $z_i$'s are adjacent. By Lemma \ref{L3}, $N_D[z_i] \subset C$  for $1 \leq i \leq t$. Thus $C_1 = C \setminus \{z_1,\ldots,z_t\}$ is a vertex cover of $D$. If  $x_i \in L_1^{D}(C_1)$ for some $i \in [r]$, that means there  exists some  $  z_j \in N_D^+ (x_i) \cap {C_1}^c.$ Since $(x_i,z_j) \in E(D)$,  $x_iz_j^{w_j} \in \mathcal{G}(I_{\subseteq C})$ and it contradicts the fact that, $f$ is minimal. By the similar argument, if  $y_i \in L_1^{D}(C_1)$ for some $i \in [s]$,  we get a contradiction. Therefore each of $x_i$  and   $y_i \notin L_1^{D}(C_1)$.  By Lemma \ref{svc5}, there exists  a strong vertex cover $C^{\prime} \subseteq C_1 $, where each of $x_i$  and $y_i \notin L_1^{D}(C^{\prime})$. So $I_{{C^{\prime}}} = (x_1^{w_{x_1}},\ldots, x_r^{w_{x_r}}, y_1^{w_{y_1}},\ldots,   y_s^{w_{y_s}},\ldots  )$. Note that $f \notin I_{C^{\prime}}$, which is a contradiction. Hence there does not  exist any minimal generator of $I_{\subseteq C}$, which involves more than two vertices.

$(2)$ By the similar argument as in $(1)$,  we can show that  there is  no minimal generator  of $I_{\subseteq C}$, which involves    two  non-adjacent vertices of $D$.      
\end{proof}

%
%
%
%
%
%
%
%
%

Next we see  some  applications of the above theorem.

\begin{definition}
	A rooted tree is
	an oriented tree in which all edges are oriented  away from  the root.
\end{definition}
\begin{example} 
Consider the  weighted rooted  tree $D$ with degree of root is $1$, 	
\begin{figure}[!ht]
		\begin{tikzpicture}[scale=01]
			\begin{scope}[ thick, every node/.style={sloped,allow upside down}] 
				\definecolor{ultramarine}{rgb}{0.07, 0.04, 0.56} 
				\definecolor{zaffre}{rgb}{0.0, 0.08, 0.66}
				
			\draw [fill, black](-1.5,0) --node {\midarrow}(0.5,0);  
			\draw [fill, black](0.5,0) --node {\midarrow}(2,1);
			\draw [fill, black](0.5,0) --node {\midarrow}(2,-1);	
			\draw [fill, black](2,1) --node {\midarrow}(3.3,1.5);
			\draw [fill, black](2,1) --node {\midarrow}(3.3,0.5);
			\draw [fill, black](2,-1) --node {\midarrow}(3.3,-1.5);
			\draw [fill, black](2,-1) --node {\midarrow}(3.3,-0.5);

	        \draw [fill, black](3.3,1.5) --node {\midarrow}(4.6,2);
	        \draw [fill, black](3.3,1.5) --node {\midarrow}(4.6,1.3);   
		    \draw [fill, black](3.3,0.5) --node {\midarrow}(4.6,0.9);
			\draw [fill, black](3.3,0.5) --node {\midarrow}(4.6,0.2);
			
			\draw [fill, black](3.3,-1.5) --node {\midarrow}(4.6,-2);
			\draw [fill, black](3.3,-1.5) --node {\midarrow}(4.6,-1.3);   
			\draw [fill, black](3.3,-0.5) --node {\midarrow}(4.6,-0.9);
			\draw [fill, black](3.3,-0.5) --node {\midarrow}(4.6,-0.2);

	\draw[fill] [fill] (-1.5,0) circle [radius=0.05];
	\draw[fill] [fill] (0.5,0) circle [radius=0.05];
	\draw[fill] [fill] (2,1) circle [radius=0.05];
	\draw[fill] [fill] (2,-1) circle [radius=0.05];
	\draw[fill] [fill] (3.3,1.5) circle [radius=0.05];
	\draw[fill] [fill] (3.3,0.5) circle [radius=0.05];
	\draw[fill] [fill] (3.3,-1.5) circle [radius=0.05];
    \draw[fill] [fill] (3.3,-0.5) circle [radius=0.05];

   \draw[fill] [fill] (4.6,2) circle [radius=0.05];
   \draw[fill] [fill] (4.6,1.3) circle [radius=0.05];
   \draw[fill] [fill] (4.6,0.9) circle [radius=0.05];
   \draw[fill] [fill] (4.6,0.2) circle [radius=0.05];
  \draw[fill] [fill] (4.6,-2) circle [radius=0.05];
  \draw[fill] [fill] (4.6,-1.3) circle [radius=0.05];
  \draw[fill] [fill] (4.6,-0.9) circle [radius=0.05];
  \draw[fill] [fill] (4.6,-0.2) circle [radius=0.05];  
	
%
			    \node at (-1.5,0.5) {$x_0$};
				\node at (0.5,0.5) {$x_1$};
				\node at (2,1.5) {$x_{2}$};  
				\node at (2,-1.5) {$x_{3}$};
				
\node at (-1.5,-0.5) {$w=1$};
\node at (0.5,-0.5) {$w \neq 1$};
\node at (2,0.5) {$w  \neq  1$};  
\node at (2,-0.5) {$w \neq 1$};    				
				
\node at (3.3,1.9) {$w  \neq  1$};  
\node at (3.3,0.9) {$w \neq 1$};				
\node at (3.3,-1.9) {$w  \neq  1$};  
\node at (3.3,-0.9) {$w \neq 1$};

%
%

				\node at (2,-2.8) {$D$};
				

			\end{scope}
		\end{tikzpicture}
		\caption{A weighted  rooted  tree  $D$}\label{fig.1}
	\end{figure}
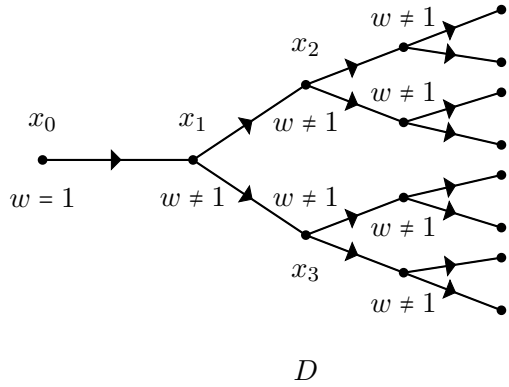
as in  Figure \ref{fig.1}. Let $I=I(D)$. Note that $C = V(D) \setminus  \{x_1\}$ is a   vertex  cover of $D$. Here $ L_1^{D}(C) = \{x_0\}$, $ L_2^{D}(C) = \{x_2,x_3\}$ and  $ L_3^{D}(C) = V(D) \setminus  \{x_0,x_1,x_2,x_3\}$.  By the definition of $D$,  we can see that each element of $ L_3^{D}(C) $ satisfies the SVC condition on $C$  and so $C$ is  strong in $D$.  Notice that $ V(D) $ is a   vertex  cover of $D$ and  $ L_3^{D}(V(D)) = V(D) $.  Since $N_D^-(x_0) = \phi$, $x_0$ does not satisfy the SVC condition on $V(D)$. Therefore $V(D)$ is not    strong in $D$.    Hence  $C$ is  maximal.  Let $D_1 = D \setminus \{x_0,x_1\}$.  Then by  Theorem \ref{minimalgenerator},   we have   $\displaystyle{I_{\subseteq C}   = (x_0,x_2^{w_2},x_3^{w_3})+I(D_1)}$.	
\end{example}

\pagebreak

\begin{remark}
	In a weighted oriented graph, if we know  all the maximal strong vertex covers, then by the Theorem \ref{minimalgenerator} and Lemma \ref{cooper}, we can find the symbolic powers of its edge ideal.

\end{remark}

\section{Symbolic powers of induced weighted oriented graphs}
In this section,  we see that, by studying    the symbolic powers of edge ideal of an  induced digraph of a weighted oriented graph  $D$, we can get information about the symbolic powers  of edge ideal of  $D$.

By \cite[Corollary 2.7]{minh}, if $ H $ be an induced subgraph of a simple graph $G$, then $ {I(H)}^{(s)} \neq {I(H)}^s $ for some $s \geq 2,$ implies $ {I(G)}^{(s)} \neq {I(G)}^s $. In general, this property may not hold for weighted oriented graphs.  But under certain condition on the strong vertex covers,  we extend this result for weighted oriented graphs  in Theorem \ref{induced}.

\begin{remark}
Let  $D^{\prime}$  is an induced  digraph of $D$. If one source vertex of $D^{\prime}$ is not source in  $D$,    then  $ {I(D^{\prime})}^{(s)} \neq {I(D^{\prime})}^s $ for some $s \geq 2,$ may not imply  $ {I(D)}^{(s)} \neq {I(D)}^s $.  For example consider the  weighted oriented  paths $D$ and $D^{\prime}$ as in Figure \ref{fig.2}.   Then $I(D) = (x_1x_2^3,x_2x_3,x_3x_4,x_4^3x_5)$ and $I(D^{\prime}) = (x_1x_2^3,x_2x_3,x_3x_4)$. Note that $D^{\prime}$  is an induced  path of $D$.  Here $w(x_4) = 1$ in $D^{\prime}$  but  $w(x_4)=3$ in $D$. 
Using Macaulay 2, we see  that $ {I(D^{\prime})}^{(2)} \neq {I(D^{\prime})}^2 $, but   $ {I(D)}^{(2)} = {I(D)}^2 $.

\begin{figure}[!ht]
		\begin{tikzpicture}[scale=0.7]
			\begin{scope}[ thick, every node/.style={sloped,allow upside down}] 
				\definecolor{ultramarine}{rgb}{0.07, 0.04, 0.56} 
				\definecolor{zaffre}{rgb}{0.0, 0.08, 0.66}
				
				\draw [fill, black](0,0) --node {\midarrow}(1.5,0);  
				\draw [fill, black](1.5,0) --node {\midarrow}(3,0);
				\draw [fill, black](4.5,0) --node {\midarrow}(3,0);
				\draw[fill, black]  (6,0) --node {\midarrow}(4.5,0);

				\draw[fill] [fill] (0,0) circle [radius=0.04];
				\draw[fill] [fill] (1.5,0) circle [radius=0.04];
				\draw[fill] [fill] (3,0) circle [radius=0.04];
				\draw[fill] [fill] (4.5,0) circle [radius=0.04];
				\draw[fill] [fill] (6,0) circle [radius=0.04];

				\node at (0,0.5) {$x_1$};
				\node at (1.5,0.5) {$x_2$};
				\node at (3,0.5) {$x_{3}$};  
				\node at (4.5,0.5) {$x_{4}$};
				\node at (6,0.5) {$x_{5}$};    
				
				\node at (0,-0.5) {$1$};
				\node at (1.5,-0.5) {$3$};
				\node at (3,-0.5) {$1$};  
				\node at (4.5,-0.5) {$3$};
				\node at (6,-0.5) {$1$};

				\draw [fill, black](8,0) --node {\midarrow}(9.5,0);  
				\draw [fill, black](9.5,0) --node {\midarrow}(11,0);
				\draw [fill, black](12.5,0) --node {\midarrow}(11,0);

				\draw[fill] [fill] (8,0) circle [radius=0.04];
				\draw[fill] [fill] (9.5,0) circle [radius=0.04];
				\draw[fill] [fill] (11,0) circle [radius=0.04];
				\draw[fill] [fill] (12.5,0) circle [radius=0.04];

				\node at (8,0.5) {$x_1$};
				\node at (9.5,0.5) {$x_2$};
				\node at (11,0.5) {$x_{3}$};  
				\node at (12.5,0.5) {$x_{4}$};
				
				\node at (8,-0.5) {$1$};
				\node at (9.5,-0.5) {$3$};
				\node at (11,-0.5) {$1$};  
				\node at (12.5,-0.5) {$1$};

				\node at (3.1,-1.2) {$D$};
				\node at (10.3,-1.2) {$D^{\prime}$};	

			\end{scope}
		\end{tikzpicture}
		\caption{A weighted oriented path $D$ containing an induced weighted oriented path  $D^{\prime}$.}\label{fig.2}
	\end{figure}
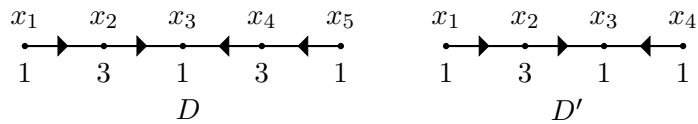

	\hspace*{0.5cm} If all source vertices of $D^{\prime}$ are source in $D$, we may have $ {I(D^{\prime})}^{(s)} \neq {I(D^{\prime})}^s $, but   $ {I(D)}^{(s)} = {I(D)}^s $ for some $s > 1$.  For example consider the  weighted oriented  paths $D$ and $D^{\prime}$ as in Figure \ref{fig.4}. Then $I(D) = (x_1x_2^2,x_2x_3,x_3x_4^2,x_5x_4^2,x_6x_5^2)$ and $I(D^{\prime}) = (x_1x_2^2,x_2x_3,x_3x_4^2)$. Note that $D^{\prime}$  is an induced  path of $D$.  Using Macaulay 2, we see that $ {I(D^{\prime})}^{(2)} \neq {I(D^{\prime})}^2 $ and   $ {I(D)}^{(2)} = {I(D)}^2 $. 
	\begin{figure}[!ht]
		\begin{tikzpicture}[scale=0.7]  
			\begin{scope}[ thick, every node/.style={sloped,allow upside down}] 
				\definecolor{ultramarine}{rgb}{0.07, 0.04, 0.56} 
				\definecolor{zaffre}{rgb}{0.0, 0.08, 0.66}
				
				\draw [fill, black](0,0) --node {\midarrow}(1.5,0);  
				\draw [fill, black](1.5,0) --node {\midarrow}(3,0);
				\draw [fill, black](3,0) --node {\midarrow}(4.5,0);
				\draw[fill, black]  (6,0) --node {\midarrow}(4.5,0);
				\draw[fill, black]  (7.5,0) --node {\midarrow}(6,0);

				\draw[fill] [fill] (0,0) circle [radius=0.04];
				\draw[fill] [fill] (1.5,0) circle [radius=0.04];
				\draw[fill] [fill] (3,0) circle [radius=0.04];
				\draw[fill] [fill] (4.5,0) circle [radius=0.04];
				\draw[fill] [fill] (6,0) circle [radius=0.04];
				\draw[fill] [fill] (7.5,0) circle [radius=0.04];

				\node at (0,0.5) {$x_1$};
				\node at (1.5,0.5) {$x_2$};
				\node at (3,0.5) {$x_{3}$};  
				\node at (4.5,0.5) {$x_{4}$};
				\node at (6,0.5) {$x_{5}$};  
				\node at (7.5,0.5) {$x_{6}$};

				\node at (0,-0.5) {$1$};
				\node at (1.5,-0.5) {$2$};
				\node at (3,-0.5) {$1$};  
				\node at (4.5,-0.5) {$2$};
				\node at (6,-0.5) {$2$};     
				\node at (7.5,-0.5) {$1$};

				\draw [fill, black](9.5,0) --node {\midarrow}(11,0);
				\draw [fill, black](11,0) --node {\midarrow}(12.5,0);  
					\draw [fill, black](12.5,0) --node {\midarrow}(14,0);

				\draw[fill] [fill] (9.5,0) circle [radius=0.04];
				\draw[fill] [fill] (11,0) circle [radius=0.04];
				\draw[fill] [fill] (12.5,0) circle [radius=0.04];
				\draw[fill] [fill] (14,0) circle [radius=0.04];

				\node at (9.5,0.5) {$x_1$};
				\node at (11,0.5) {$x_{2}$};  
				\node at (12.5,0.5) {$x_{3}$};
				\node at (14,0.5) {$x_{4}$};
				
				\node at (9.5,-0.5) {$1$};
				\node at (11,-0.5) {$2$};  
				\node at (12.5,-0.5) {$1$};
				\node at (14,-0.5) {$2$};

				\node at (3,-1.2) {$D$};
				\node at (11,-1.2) {$D^{\prime}$};	

			\end{scope}
		\end{tikzpicture}
		\caption{A weighted naturally oriented path $D$ containing an induced weighted oriented path  $D^{\prime}$.}\label{fig.4}
	\end{figure}

\end{remark}

If $ D^{\prime} $ is an induced digraph of $D$ and $ C $ is a maximal strong vertex cover of $ D $,  we see that $ C \cap V(D^{\prime}) $ may not contain any maximal strong vertex cover of  $D^{\prime}$  in the next example. 

\begin{example}
Consider the  weighted oriented paths $D$ and $D^{\prime}$ as in Figure \ref{fig.3}. Note that $D^{\prime}$  is an induced path of $D$. Using Macaulay $2$,  the strong vertex covers of $ D $ are $ \{x_1, x_3, x_5\},$ $ \{x_2, x_3, x_5\},$ $   \{x_2,x_4,x_5\},$ $   \{x_2, x_4, x_6\},$ $ \{x_1, x_3, x_4, x_6\},$ $ \{x_2, x_3, x_4, x_6\}  $ and  the strong vertex covers of  $D^{\prime}$ are $\{x_2\},$ $ \{x_1, x_3\},$ $ \{x_2, x_3\}$. 
Note that $ C = \{x_2, x_4, x_5\} $ is a maximal strong vertex cover of $ D $,
but $ C \cap V(D^{\prime})= \{x_2\} $ does not contain any maximal strong vertex cover of  $D^{\prime}$.	
	
 \begin{figure}[!ht]
		\begin{tikzpicture}[scale=0.7]  
		\begin{scope}[ thick, every node/.style={sloped,allow upside down}] 
		\definecolor{ultramarine}{rgb}{0.07, 0.04, 0.56} 
		\definecolor{zaffre}{rgb}{0.0, 0.08, 0.66}
		
		\draw [fill, black](0,0) --node {\midarrow}(1.5,0);  
		\draw [fill, black](1.5,0) --node {\midarrow}(3,0);
		\draw [fill, black](3,0) --node {\midarrow}(4.5,0);
		\draw[fill, black]  (4.5,0) --node {\midarrow}(6,0);
		\draw[fill, black]  (6,0) --node {\midarrow}(7.5,0);

		\draw[fill] [fill] (0,0) circle [radius=0.04];
		\draw[fill] [fill] (1.5,0) circle [radius=0.04];
		\draw[fill] [fill] (3,0) circle [radius=0.04];
		\draw[fill] [fill] (4.5,0) circle [radius=0.04];
		\draw[fill] [fill] (6,0) circle [radius=0.04];
		\draw[fill] [fill] (7.5,0) circle [radius=0.04];

		\node at (0,0.5) {$x_1$};
		\node at (1.5,0.5) {$x_2$};
		\node at (3,0.5) {$x_{3}$};  
		\node at (4.5,0.5) {$x_{4}$};
		\node at (6,0.5) {$x_{5}$};  
		\node at (7.5,0.5) {$x_{6}$};

		\node at (0,-0.5) {$1$};
		\node at (1.5,-0.5) {$7$};
		\node at (3,-0.5) {$1$};  
		\node at (4.5,-0.5) {$1$};
		\node at (6,-0.5) {$1$};
		\node at (7.5,-0.5) {$1$};

		\draw [fill, black](9.5,0) --node {\midarrow}(11,0);
		\draw [fill, black](11,0) --node {\midarrow}(12.5,0);

		\draw[fill] [fill] (9.5,0) circle [radius=0.04];
		\draw[fill] [fill] (11,0) circle [radius=0.04];
		\draw[fill] [fill] (12.5,0) circle [radius=0.04];

		\node at (9.5,0.5) {$x_1$};
		\node at (11,0.5) {$x_{2}$};  
		\node at (12.5,0.5) {$x_{3}$};

		\node at (9.5,-0.5) {$1$};
		\node at (11,-0.5) {$7$};  
		\node at (12.5,-0.5) {$1$};

		\node at (4,-1.2) {$D$};
		\node at (11,-1.2) {$D^{\prime}$};	

		\end{scope}
		\end{tikzpicture}
		\caption{A weighted naturally oriented path $D$ containing an induced weighted oriented path  $D^{\prime}$.}\label{fig.3}
	\end{figure}

\end{example}
\vspace*{0.5cm}

\begin{theorem}\label{induced}
	Let $ D $ be a  weighted oriented graph.  Let $ D^{\prime} $ be an induced digraph of $D$ and it satisfies the condition  ``if $C$ is  a maximal strong vertex cover of $D$, then every strong vertex cover of $ D^{\prime} $ contained in $C$, is subset of at most one maximal strong vertex cover of $ D^{\prime} $  contained in $C$".   	
 If $ {I(D^{\prime})}^{(s)} \neq {I(D^{\prime})}^s $ for some $s \geq 2,$ then  $ {I(D)}^{(s)} \neq {I(D)}^s $.

\end{theorem}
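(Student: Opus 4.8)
The plan is to produce a single monomial witness that serves for both graphs. Since $I(D')^{s}$ and $I(D')^{(s)}$ are monomial ideals, the strict inequality $I(D')^{(s)}\neq I(D')^{s}$ is witnessed by a monomial $f\in I(D')^{(s)}\setminus I(D')^{s}$, and necessarily $\supp(f)\subseteq V(D')$. I would show that this same $f$ lies in $I(D)^{(s)}\setminus I(D)^{s}$, which immediately gives $I(D)^{(s)}\neq I(D)^{s}$.

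First I would verify $f\notin I(D)^{s}$. Because $D'$ is induced, any edge $(x_i,x_j)\in E(D)$ with $x_i,x_j\in V(D')$ is an edge of $D'$, and its terminal vertex $x_j$ is not a source of $D'$, so its $D'$-weight equals $w(x_j)$ and the generator $x_ix_j^{w_j}$ of $I(D)$ is also a generator of $I(D')$. Hence the minimal generators of $I(D)$ whose support lies in $V(D')$ are exactly the minimal generators of $I(D')$. If $f\in I(D)^{s}$, then $f$ is divisible by a product of $s$ minimal generators of $I(D)$; as $\supp(f)\subseteq V(D')$ each such factor is $V(D')$-supported, hence a generator of $I(D')$, forcing $f\in I(D')^{s}$, a contradiction.

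The substance is to prove $f\in I(D)^{(s)}$. By Lemma \ref{cooper} it suffices to check $f\in (I_{\subseteq C})^{s}$ for every maximal strong vertex cover $C$ of $D$, so fix such a $C$. Using Theorem \ref{minimalgenerator} I would single out the minimal generators of $I_{\subseteq C}$ supported on $V(D')$; since $\supp(f)\subseteq V(D')$, membership $f\in(I_{\subseteq C})^{s}$ is equivalent to $f$ lying in the $s$-th power of the subideal $J_C$ they generate. The goal of this step is to match $J_C$ with an intersection of ideals $I_{\subseteq C'}^{D'}$, where the $C'$ are maximal strong vertex covers of $D'$ contained in $C$. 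Here the lemmas of the previous section do the bookkeeping: $C\cap V(D')$ is a vertex cover of $D'$, Lemma \ref{svc2} extracts strong vertex covers of $D'$ inside $C$, and Corollary \ref{svc3} together with Lemma \ref{svc5} let me control their $L_1/L_2/L_3$ partitions so that the $V(D')$-supported part of the generator list in Theorem \ref{minimalgenerator} for $C$ reproduces that for each such $C'$. Since every such $C'$ is a maximal strong vertex cover of $D'$, the hypothesis $f\in I(D')^{(s)}$ already yields $f\in(I_{\subseteq C'}^{D'})^{s}$.

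The hypothesis on $C$, and the main obstacle, both live in the final passage from \emph{``$f\in(I_{\subseteq C'}^{D'})^{s}$ for every relevant $C'$''} to \emph{``$f\in J_C^{s}$''}. In general $\bigcap_{C'}(I_{\subseteq C'}^{D'})^{s}$ is strictly larger than $\big(\bigcap_{C'}I_{\subseteq C'}^{D'}\big)^{s}$, so membership in each factor's power need not yield membership in the power of the intersection; this gap is precisely what the counterexamples of the Remark exploit. The assumption that every strong vertex cover of $D'$ contained in $C$ is a subset of at most one maximal strong vertex cover of $D'$ contained in $C$ is exactly what closes it: it prevents two of the covers $C'$ from sharing a common strong subcover, so the corresponding ideals involve essentially disjoint sets of generators, and for such ideals intersection commutes with taking $s$-th powers, giving $J_C^{s}=\bigcap_{C'}(I_{\subseteq C'}^{D'})^{s}\ni f$. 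Thus $f\in J_C^{s}\subseteq(I_{\subseteq C})^{s}$, and ranging over all maximal strong vertex covers $C$ of $D$ we conclude $f\in I(D)^{(s)}$. I expect the generator matchup of the third paragraph and the ``disjointness forces commutation'' step to be the technical heart; the weight subtlety for vertices that become sources in $D'$ is harmless here, since such a vertex has only out-edges in $D'$ and so its altered weight never enters the $V(D')$-supported generators in play.
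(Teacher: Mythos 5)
Your opening reductions are sound and agree with the paper's: a monomial witness $f\in I(D')^{(s)}\setminus I(D')^{s}$ has $\supp(f)\subseteq V(D')$; since $D'$ is induced, the minimal generators of $I(D)$ supported in $V(D')$ are exactly those of $I(D')$, so $f\notin I(D)^{s}$; and by Lemma \ref{cooper} everything comes down to showing $f\in (I_{\subseteq C})^{s}$ for each maximal strong vertex cover $C$ of $D$, which (as you correctly note) is the same as $f$ lying in the $s$-th power of the $V(D')$-supported part of $I_{\subseteq C}$. The gap is in how you then use the hypothesis. Your mechanism requires at least one maximal strong vertex cover of $D'$ to be contained in $C$, but this can fail: in the paper's Figure \ref{fig.3}, $C=\{x_2,x_4,x_5\}$ is a maximal strong vertex cover of $D$ and $C\cap V(D')=\{x_2\}$ contains no maximal strong vertex cover of $D'$. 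In that situation your family of covers $C'$ is empty, the proposed identity $J_C=\bigcap_{C'} I^{D'}_{\subseteq C'}$ degenerates to the unit ideal, and the hypothesis $f\in I(D')^{(s)}$ gives you nothing: every factor of the intersection defining $I(D')^{(s)}$ corresponds to a maximal cover of $D'$ that is \emph{not} contained in $C$. Half of the paper's proof (its Case II) is devoted exactly to this situation: one takes a strong vertex cover $C'\subseteq C\cap V(D')$ (Lemma \ref{svc2}), extends it to a maximal strong vertex cover $\tilde{C}$ of $D'$ with $\tilde{C}\nsubseteq C$, and proves $\tilde{I}_{\subseteq\tilde{C}}\subseteq I_{\subseteq C}$ nonetheless; the saving fact is that every vertex of $\tilde{C}\setminus C$ lies in $L_3^{D'}(\tilde{C})$, so the generators it contributes are edge monomials, which lie in $I(D)\subseteq I_{\subseteq C}$ automatically, while the $L_1$- and $L_2$-type generators are shown to sit inside the corresponding parts of $I_{\subseteq C}$ by comparing the partitions $L_i^{D'}(\tilde{C})$ with $L_i^{D}(C)$.

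The second problem is the ``disjointness forces commutation'' step, which is both unjustified and a misreading of what the hypothesis does. As the paper uses the hypothesis, a maximal strong vertex cover $C$ of $D$ contains \emph{at most one} maximal strong vertex cover of $D'$, so there is never more than one ideal to intersect and no commutation question ever arises; all one needs is the single containment $\tilde{I}_{\subseteq\tilde{C}}\subseteq I_{\subseteq C}$, which immediately gives $f\in(\tilde{I}_{\subseteq\tilde{C}})^{s}\subseteq(I_{\subseteq C})^{s}$ (your stronger equality matching of $J_C$ with an intersection is not needed). If instead you read the hypothesis so as to allow several maximal covers $C'$ inside $C$, your claim that their ideals ``involve essentially disjoint sets of generators'' is simply false: in the paper's Remark following the theorem, the two maximal covers $\{x_2,x_3\}$ and $\{x_2,x_4\}$ of $D'$ inside $C=\{x_2,x_3,x_4\}$ share the vertex $x_2$, their associated ideals interact, and the conclusion of the theorem genuinely fails there -- so no disjointness argument can close that gap. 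Finally, note that the hypothesis is not used only at the end: the paper invokes it repeatedly inside the generator comparison itself (e.g., to rule out $x_i\in L_3^{D}(C)$ for $x_i\in L_1^{D'}(\tilde{C})$ by producing two strong covers of $D'$ in $C$ that cannot lie under a common maximal one), so the ``bookkeeping'' you defer to the earlier lemmas in fact needs the hypothesis as an active ingredient.
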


\begin{proof}
	
Let $I=I(D)$ and $\tilde{I}= I(D^{\prime})$. Since
$ I(D^{\prime})^s $
is the restriction of $ I(D)^s $ to $ D^{\prime} $,
it suffices to show that $\tilde{I}^{(s)} \subseteq  I^{(s)}.$  
Equivalently, by Lemma \ref{cooper}, it suffices to show that ``if $ f $ is a minimal generator of
$ \tilde{I}^{(s)} $,
then $f \in I_{\subseteq C}^s $ for each maximal strong vertex cover $ C $ of $ D $.”

Let $C$ be a maximal strong  vertex cover of $D$.  If $C$ contains two maximal  strong vertex covers  of $D^{\prime}$,  then those two maximal strong vertex covers can not be subset of one   maximal strong vertex cover of $ D^{\prime} $  contained in $C$, by our assumption.  Therefore  the proof follows from the following two cases.

\textbf{Case-I}:   $C$ contains exactly one maximal  strong vertex cover  of $D^{\prime}$.    

Let  ${\tilde{C}}$ be the  maximal  strong vertex cover  of $D^{\prime}$ contained in $C$. First we claim that $ \tilde{I}_{\subseteq \tilde{C}} \subseteq  I_{\subseteq C} $.

\textbf{Case (1)} Let $x_i \in L_1^{D^{\prime}}({\tilde{C}})$.    

$\textbf{\underline{\mbox{Case  (1.a)}}}$ Assume that $ w_i = 1  $ in $D^{\prime}$.

$\textbf{\underline{\mbox{Case  (1.a.i)}}}$ Assume $ w_i = 1  $ in $D$.
We claim $x_i \in L_1^{D}(C)  \cup  L_2^{D}(C)$. Suppose $x_i \in  L_3^{D}(C).$  Since $x_i \in L_1^{D^{\prime}}({\tilde{C}})$,  $ N_{D^{\prime}}^+ (x_i) \cap {\tilde{C}}^c \neq        \phi $ and so  $ N_{D^{\prime}} (x_i) \cap {\tilde{C}}^c \neq        \phi $.  Let $N_{D^{\prime}} (x_i) \cap {\tilde{C}}^c = \{x_{j_1},x_{j_2},\ldots,x_{j_q}\}$. Note that  $N_{D^{\prime}}(x_i) \subseteq  N_D(x_i)$ and by Lemma \ref{L3}, $N_D(x_i) \subset C$. Then $\{x_{j_1},x_{j_2},\ldots,x_{j_q}\} \subset C$.  Let $C_1  = \tilde{C}\cup  \{x_{j_1},x_{j_2},\ldots,x_{j_q}\} $. Since $ N_{D^{\prime}}[x_i]  \subset C_1$, $C_1 \setminus \{x_i\}$ is a vertex cover of $D^{\prime}$. Observe that $x_{j_1} \notin  L_3^{D^{\prime}}(C_1 \setminus \{x_i\})$ and hence by Corollary \ref{svc3}, there exists a strong vertex cover  $\tilde{\tilde{C}} \subseteq [C_1 \setminus \{x_i\}] $  of $D^{\prime}$ such that $x_{j_1}  \in  \tilde{\tilde{C}}$.    Here $ \tilde{\tilde{C}} \subset C  $, $x_i \in \tilde{C}   $, $x_i \notin \tilde{\tilde{C}}   $, $x_{j_1} \in \tilde{\tilde{C}}   $ and $x_{j_1} \notin {\tilde{C}}$.
  Since   $\tilde{C}$ is maximal, $\tilde{C}$ and  $\tilde{\tilde{C}}$ can not be subsets of at most one maximal strong vertex cover  of $ D^{\prime} $ contained in $C$.  So it contradicts our assumption. Thus the claim follows.

$\textbf{\underline{\mbox{Case  (1.a.ii)}}}$ Assume $ w_i \neq 1  $ in $D$. That means $N_D^-(x_i)  \neq \phi$. Since $ w_i = 1  $ in $D^{\prime}$, $N_{D^{\prime}}^-(x_i)  = \phi$.  
We claim $x_i \in L_1^{D}(C)  $. Suppose $x_i \in    L_2^{D}(C) \cup L_3^{D}(C).$
Here $ N_{D^{\prime}}^+ (x_i) \cap {\tilde{C}}^c \neq        \phi $.  Let $N_{D^{\prime}}^+ (x_i) \cap {\tilde{C}}^c = \{x_{j_1},x_{j_2},\ldots,x_{j_r}\}$
and we know $N_{D^{\prime}}^- (x_i) =\phi  $.  Note that  $N^+_{D^{\prime}}(x_i) \subseteq  N^+_D(x_i)$, and  $N^+_D(x_i) \subset C$ because $x_i \notin  L_1^{D}(C) .$
Then $\{x_{j_1},x_{j_2},\ldots,x_{j_r}\} \subset C$.   Let $C_1  = \tilde{C}\cup  \{x_{j_1},x_{j_2},\ldots,x_{j_r}\} $. Since $ N_{D^{\prime}}[x_i]  \subset C_1$, $C_1 \setminus \{x_i\}$ is a vertex cover of $D^{\prime}$. By the similar argument as in  ${{\mbox{Case  (1.a.i)}}}$,  every strong vertex cover of $ D^{\prime} $ contained in $C$, is not  subset of  at most one maximal  strong vertex cover of $ D^{\prime} $  contained in $C$, which contradicts our assumption. Thus the claim follows.

$\textbf{\underline{\mbox{Case  (1.b)}}}$ Assume that $ w_i \neq 1 $ in $D^{\prime}$. We claim $x_i \in L_1^{D}(C)$. Suppose $x_i \in L_2^{D}(C) \cup  L_3^{D}(C).$ Here $ N_{D^{\prime}}^+ (x_i) \cap {\tilde{C}}^c \neq        \phi $. Let $N_{D^{\prime}}^+ (x_i) \cap {\tilde{C}}^c = \{x_{j_1},x_{j_2},\ldots,x_{j_r}\}$. Note that $\{x_{j_1},x_{j_2},\ldots,x_{j_r}\} \subset C$ because $x_i \notin  L_1^{D}(C) $. 
Let $C_1  = \tilde{C}  \cup  \{x_{j_1},x_{j_2},\ldots,x_{j_r}\} $. Then $x_i \notin L_1^{D^{\prime}}({{C_1}})$ and by Lemma \ref{L3}, we have  $L_3^{D^{\prime}}({{\tilde{C}}}) \cup \{x_{j_1},x_{j_2},\ldots,x_{j_r}\} \subseteq L_3^{D^{\prime}}(C_1)$.       Let $x_l \in L_3^{D^{\prime}}({{\tilde{C}}})$. Then $ x_l $ satisfies SVC condition
on  $ \tilde{C} $ because $ \tilde{C} $ is strong.  By Lemma \ref{svc1},  $x_l \in L_3^{D^{\prime}}(C_1)$  and  $ x_l $ satisfies SVC condition
on  $ C_1 $.  Since $ x_i \in  N_{D^{\prime}}^- (x_{j_t}) \cap   V^+(D^{\prime})\cap  [L_2^{D^{\prime}}(C_1) \cup  L_3^{D^{\prime}}(C_1)]$,  $x_{j_t}$
satisfies SVC condition on $ C_1 $ for $1 \leq t \leq r$.  
If $L_3^{D^{\prime}}({{\tilde{C}}}) \cup \{x_{j_1},x_{j_2},\ldots,x_{j_r}\}= L_3^{D^{\prime}}(C_1)$,  then each element of $L_3^{D^{\prime}}({{C_1}})$  satisfies SVC condition on $ C_1 $. So $ C_1 $ is a strong
vertex cover of $D^{\prime}$. But it contradicts the fact that $ \tilde{C} $  is one  maximal  strong vertex cover of $D^{\prime}$.  Now  we assume that $L_3^{D^{\prime}}({{\tilde{C}}}) \cup \{x_{j_1},x_{j_2},\ldots,x_{j_r}\} \subsetneq L_3^{D^{\prime}}({{C_1}})$. Let $x_k  \in   L_3^{D^{\prime}}({{C_1}}) \setminus [L_3^{D^{\prime}} ({{\tilde{C}}}) \cup \{x_{j_1},x_{j_2},\ldots,x_{j_r}\} ]$. That means $ x_k $ lies in the neighbourhood of  $ x_{j_t} $ for
some $ t \in [r]$.  Without loss of generality let $x_k \in N_{D^{\prime}} (x_{j_1})$.   By Lemma \ref{L3}, $C_1 \setminus \{x_k\}$ is a vertex cover of $ D^{\prime} $. Then $x_{j_1} \notin    L_3^{D^{\prime}}({{C_1 \setminus \{x_k\}}})$ and so by Corollary \ref{svc3}, there exists a strong vertex cover  $\tilde{\tilde{C}} \subseteq [C_1 \setminus \{x_k\}]  $  of $D^{\prime}$ such that $x_{j_1} \in  \tilde{\tilde{C}}$. Here $x_k \in \tilde{C}   $, $x_k \notin \tilde{\tilde{C}}   $, $x_{j_1} \in \tilde{\tilde{C}}   $ and $x_{j_1} \notin {\tilde{C}}.$  Then by the  same  argument as in ${{\mbox{Case  (1.a.i)}}}$,  it contradicts our assumption. Thus the claim follows.

\textbf{Case (2)} Let $x_i \in L_2^{D^{\prime}}({\tilde{C}})$.

   We claim $x_i \in L_1^{D}(C)  \cup  L_2^{D}(C)$. Suppose $x_i \in  L_3^{D}(C).$
Since $x_i \in L_2^{D^{\prime}}({\tilde{C}})$, we have $ N_{D^{\prime}}^+ (x_i) \cap {\tilde{C}}^c =        \phi $ and  $ N_{D^{\prime}}^- (x_i) \cap {\tilde{C}}^c \neq        \phi $.  Let $N_{D^{\prime}}^- (x_i) \cap {\tilde{C}}^c = \{x_{j_1},x_{j_2},\ldots,x_{j_r}\}$.
Note that $\{x_{j_1},x_{j_2},\ldots,x_{j_r}\} \subset C$ because $x_i \in  L_3^{D}(C).$  Let $C_1  = \tilde{C}\cup  \{x_{j_1},x_{j_2},\ldots,x_{j_r}\} $. Since $ N_{D^{\prime}}[x_i]  \subset C_1$, $C_1 \setminus \{x_i\}$ is a vertex cover of $D^{\prime}$. By the similar argument as in  ${{\mbox{Case  (1.a.i)}}}$,  we get a contradiction. Thus  $x_i \in L_1^{D}(C)  \cup  L_2^{D}(C)$. 

\textbf{Case (3)} Let $x_i \in L_3^{D^{\prime}}({\tilde{C}})$.    Then  $x_i \in L_1^{D}(C)  \cup  L_2^{D}(C)  \cup  L_3^{D}(C)$.

Here $ \tilde{I}_{\subseteq \tilde{C}}  = (L_1^{D^{\prime}}(\tilde{C}))  + (x_i^{w_i} | x_i \in  L_2^{D^{\prime}}(\tilde{C})     ) + (x_ix_j^{w_j} ~|~ (x_i,x_j) \in E({D^{\prime}}), x_i \in 
L_2^{{D^{\prime}}}(\tilde{C}) \cup 	L_3^{{D^{\prime}}}(\tilde{C}) ~\mbox{and}~ x_j \in 
L_3^{{D^{\prime}}}(\tilde{C})  )   $  and  $ I_{\subseteq C}  = (L_1^D(C))  + (x_i^{w_i} ~|~ x_i \in  L_2^D(C)     ) + (x_ix_j^{w_j} ~|~ (x_i,x_j) \in E(D),~ x_i \in 
L_2^{D}(C) \cup 	L_3^{D}(C) ~\mbox{and}~ x_j \in 
L_3^{D}(C)  )   $.  Hence $ \tilde{I}_{\subseteq \tilde{C}} \subseteq  I_{\subseteq C} $.  
\vspace*{0.2cm}\\
If $f \in \mathcal{G}(\tilde{I}^{(s)})$, by Lemma \ref{cooper}, we have $f \in   \tilde{I}^s_{\subseteq \tilde{C}}  $, and so $f \in    I^s_{\subseteq C} $.
\vspace*{0.2cm}\\
\textbf{Case-II}:  $C$ contains no maximal  strong vertex cover  of $D^{\prime}$.

Since $ D^{\prime} $ is an induced digraph of $D$,  $C \cap V(D^{\prime}) $ is a  vertex cover of $D^{\prime}.$ By Lemma \ref{svc2}, there exists a strong vertex cover  ${C^\prime} \subseteq [C \cap V(D^{\prime})]$  of $D^{\prime}$. By our assumption, ${C^\prime}$  is not a maximal strong vertex cover of $D^{\prime}$. Thus there exists a maximal strong vertex cover ${\tilde{C}}$ of $D^{\prime}$ such that  ${C^\prime}  \subsetneq {\tilde{C}}$. Again by our assumption, $ {\tilde{C}}  \nsubseteq C$. We claim  $ \tilde{I}_{\subseteq \tilde{C}} \subseteq  I_{\subseteq C} $.

Consider one element $x  \in \tilde{C} \setminus C $. Since $x \notin C$, we have $x \notin  C^\prime  $.  This implies $N_{D^{\prime}}(x)  \subset C^\prime$ and so   $N_{D^{\prime}}(x)  \subset {\tilde{C}}$ because ${C^\prime}  \subset {\tilde{C}}$. Thus by Lemma \ref{L3}, $x\in L_3^{D^{\prime}}({\tilde{C}})$. Hence if $x\in L_1^{D^{\prime}}({\tilde{C}}) \cup L_2^{D^{\prime}}({\tilde{C}})$, then $x \in C$.

\textbf{Case (1)} Let $x_i \in L_1^{D^{\prime}}({\tilde{C}})$. Then $x_i \in C$.

$\textbf{\underline{\mbox{Case  (1.a)}}}$ Assume that $ w_i = 1  $ in $D^{\prime}$.

$\textbf{\underline{\mbox{Case  (1.a.i)}}}$ Assume $ w_i = 1  $ in $D$.
We claim $x_i \in L_1^{D}(C)  \cup  L_2^{D}(C)$. Suppose $x_i \in  L_3^{D}(C).$   Since  ${C^\prime}  \subset {\tilde{C}}$,  $x_i \in L_1^{D^{\prime}}({\tilde{C}})$  implies  $x_i \in L_1^{D^{\prime}}({{C^\prime}})$.
Note that $ N_{D^{\prime}} (x_i) \cap {\tilde{C}}^c \neq        \phi $  and     $ N_{D^{\prime}} (x_i) \cap {\tilde{C}}^c  \subseteq  N_{D^{\prime}} (x_i) \cap {{C^\prime}}^c  $.  Let $N_{D^{\prime}} (x_i) \cap {{C^\prime}}^c  = \{x_{j_{1}},x_{j_{2}},\ldots,x_{j_q}\}$. Without loss of generality we can assume that $ x_{j_{1}}  \in  N_{D^{\prime}} (x_i) \cap {\tilde{C}}^c   $.
 Since $x_i \in  L_3^{D}(C)$, we have  $\{x_{j_1},x_{j_2},\ldots,x_{j_q}\} \subset C$. Let $C_1  = {C^\prime}  \cup  \{x_{j_1},x_{j_2},\ldots,x_{j_q}\} $.    Since $ N_{D^{\prime}}[x_i]  \subset C_1$, $C_1 \setminus \{x_i\}$ is a vertex cover of $D^{\prime}$. Observe that $x_{j_1} \notin  L_3^{D^{\prime}}(C_1 \setminus \{x_i\})$ and hence by Corollary \ref{svc3}, there exists a strong vertex cover  $\tilde{\tilde{C}} \subseteq [C_1 \setminus \{x_i\}] $  of $D^{\prime}$ such that $x_{j_1}  \in  \tilde{\tilde{C}}$.  Here ${C^\prime}  \subset {\tilde{C}}$,  $ \tilde{\tilde{C}} \subset C  $,  $x_i \in {C^\prime}   $, $x_i \notin \tilde{\tilde{C}}   $, $x_{j_1} \in \tilde{\tilde{C}}   $ and $x_{j_1} \notin {C^\prime}$. If ${C^\prime}  $ and  $ \tilde{\tilde{C}}   $ are  subset of one maximal strong vertex cover $ C^{\prime\prime} $ of $D^{\prime}$ contained in $C$, then $C^{\prime}  \subsetneq C^{\prime\prime}$ and it is a contradiction by Lemma \ref{svc2}.  
Thus the claim follows.

$\textbf{\underline{\mbox{Case  (1.a.ii)}}}$ Assume $ w_i \neq 1  $ in $D$.   
We claim $x_i \in L_1^{D}(C)  $. Suppose $x_i \in    L_2^{D}(C) \cup L_3^{D}(C).$
Note that $x_i \in L_1^{D^{\prime}}({{C^\prime}})$. Then by the similar argument as in (1.a.i) of Case-II and  Case (1.a.ii) of Case-I, our claim follows.

$\textbf{\underline{\mbox{Case  (1.b)}}}$ Assume that $ w_i \neq 1 $ in $D^{\prime}$. We claim $x_i \in L_1^{D}(C)$. Suppose $x_i \in L_2^{D}(C) \cup  L_3^{D}(C).$  Note that $x_i \in L_1^{D^{\prime}}({{C^\prime}})$. Then by the similar argument as in (1.a.i) of Case-II and  Case (1.b) of Case-I, our claim follows.

\textbf{Case (2)} Let $x_i \in L_2^{D^{\prime}}({\tilde{C}})$. Then $x_i \in C$.

We claim $x_i \in L_1^{D}(C)  \cup  L_2^{D}(C)$. Suppose $x_i \in  L_3^{D}(C).$
Since ${C^\prime}  \subset {\tilde{C}}$, $x_i \in L_1^{D^{\prime}}({{C^\prime}})  \cup L_2^{D^{\prime}}({{C^\prime}}) $. If $x_i \in L_1^{D^{\prime}}({{C^\prime}})  $,
by the similar argument as in Case  (1.a.i) of Case-II, we get a contradiction.  If $x_i \in L_2^{D^{\prime}}({{C^\prime}})  $,
by the similar argument as in Case  (1.a.i) of Case-II and  Case (2) of Case-I, we get a contradiction. Thus the claim follows.

\textbf{Case (3)} Let $x_i \in L_3^{D^{\prime}}({\tilde{C}})$.    Then  $x_i \in L_1^{D}(C)  \cup  L_2^{D}(C)  \cup  L_3^{D}(C)$.

By the similar argument as in Case-I, we have  $ \tilde{I}_{\subseteq \tilde{C}} \subseteq  I_{\subseteq C} $.
\vspace*{0.2cm}\\
If $f \in \mathcal{G}(\tilde{I}^{(s)})$, by Lemma \ref{cooper}, we have $f \in   \tilde{I}^s_{\subseteq \tilde{C}}  $, and so $f \in    I^s_{\subseteq C} $.
\end{proof}

\begin{remark}
	The above theorem may not be true if we remove the given condition.
	
\hspace*{0.5cm}For example consider the  weighted oriented  paths $D$ and $D^{\prime}$ as in Figure \ref{fig.3}. Here $I(D) = (x_1x_2^3,x_2x_3,x_3x_4,x_4^3x_5)$ and $I(D^{\prime}) = (x_1x_2^3,x_2x_3,x_3x_4)$.  
Using Macaulay 2, the strong vertex covers of $D$ are $\{x_2,x_4\},$  $\{x_1,x_3,x_4\},$ $\{x_1,x_3,x_5\},$ $\{x_2,x_3,x_4\}$, $\{x_2,x_3,x_5\}$ and the strong vertex covers of $D^{\prime}$ are $\{x_1,x_3\},$  $\{x_2,x_3\}$,  $\{x_2,x_4\}$.  Let  $C =\{x_2,x_3,x_4\}$.
Note that $C$  contains  the two maximal strong vertex covers $\{x_2,x_3\}$ and $\{x_2,x_4\}$ of $D^{\prime}$. Those two maximal strong vertex covers are not  subset of one  strong vertex cover of $ D^{\prime} $  contained in $C$.  Using Macaulay 2, we see that $I(D^{\prime})^{(2)} \neq I(D^{\prime})^2$ but $I(D)^{(2)} = I(D)^2$.

\end{remark}

Next  we see some applications of  Theorem \ref{induced} for induced weighted oriented  paths.

\begin{theorem}\label{inducedpath} Let $ D $ be a weighted oriented path with $ V(D) = \{x_1, x_2, \ldots , x_n, y_1, y_2,\\ \ldots , y_m\}. $ Let $ D^{\prime} $ be the induced weighted oriented path of $D$ with $ V(D^{\prime}) = \{x_1, x_2,  \ldots , x_n\} $ and $N_{D}^- (x_{n-1}) \cap V^+(D) = \phi.$ If $ {I(D^{\prime})}^{(s)} \neq {I(D^{\prime})}^s $ for some $s \geq 2,$ then $ {I(D)}^{(s)} \neq {I(D)}^s $.
\end{theorem}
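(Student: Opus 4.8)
The plan is to deduce this theorem directly from Theorem \ref{induced}. Since $D'$ is by hypothesis an induced digraph of $D$, the only thing to check is that $D'$ satisfies the combinatorial condition in that theorem, after which $I(D')^{(s)} \neq I(D')^s$ forces $I(D)^{(s)} \neq I(D)^s$ with no further work. I would reduce the condition to the cleaner statement that \emph{every maximal strong vertex cover $C$ of $D$ contains at most one maximal strong vertex cover of $D'$}: if this holds, then trivially each strong vertex cover of $D'$ contained in $C$ lies in at most one maximal strong vertex cover of $D'$ contained in $C$, which is exactly the hypothesis of Theorem \ref{induced} (and matches the ``Case-I/Case-II'' dichotomy used in its proof).

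The first substantive step is to extract the meaning of the hypothesis $N_D^-(x_{n-1}) \cap V^+(D) = \phi$. Because $D'$ is induced and $x_{n-1}$ has the same neighbours $\{x_{n-2},x_n\}$ in $D$ and in $D'$, this says that no in-neighbour of $x_{n-1}$ carries a non-trivial weight. By Remark \ref{s.v.1}, any vertex lying in $L_3$ of a strong cover must possess a non-trivially weighted in-neighbour; hence I would record the key consequence that $x_{n-1} \notin L_3^{D'}(C')$ for \emph{every} strong vertex cover $C'$ of $D'$, equivalently (by Lemma \ref{L3}) that $N_{D'}(x_{n-1}) = \{x_{n-2},x_n\} \not\subseteq C'$ for every such $C'$. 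Since $x_n$ is precisely the endpoint of $D'$ adjacent to the deleted vertices $y_1,\ldots,y_m$, this rigidity sits at exactly the place where the strong-cover structure of $D'$ can branch.

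Next I would prove the uniqueness claim by contradiction. Suppose $M_1 \neq M_2$ are maximal strong vertex covers of $D'$ with $M_1,M_2 \subseteq C$. Looking at the path from the right, let $x_j$ be the largest-index vertex with $x_j \in M_1 \triangle M_2$; say $x_j \in M_2 \setminus M_1$, so that $x_{j-1},x_{j+1} \in M_1$ and $M_1,M_2$ agree on $x_{j+1},\ldots,x_n$. The aim is to show, using Lemma \ref{svc1} and Corollary \ref{svc3}, that toggling membership of $x_j$ while staying inside the common cover $C$ either uncovers an edge (Lemma \ref{L3}) or forces a genuine $L_3^{D'}$-vertex to acquire a trivially weighted in-neighbour, \emph{unless} the branch occurs at the extreme right, i.e. $j \in \{n-1,n\}$. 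There the key consequence applies: the only configuration producing two distinct maximal covers would place $x_{n-1}$ into $L_3^{D'}$ of a strong cover, which is impossible. This contradiction gives $M_1 = M_2$, and Theorem \ref{induced} then finishes the proof.

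The main obstacle I anticipate is establishing the interior rigidity, namely that distinct maximal strong vertex covers of $D'$ sitting inside a common $C$ can only differ at the boundary pair $x_{n-1},x_n$. For the interior of a path this should follow because deleting a vertex from a strong cover immediately creates either an uncovered edge or an unwitnessed $L_3$-vertex (Lemma \ref{L3} and Remark \ref{s.v.1}), so the rightmost discrepancy is forced toward $x_n$; but making this propagation argument airtight requires a careful case analysis organized by the orientation of the edge between $x_{n-1}$ and $x_n$ and by whether $w(x_n)=1$, checking in each case that the branching is exactly the one ruled out by $N_D^-(x_{n-1}) \cap V^+(D) = \phi$. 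Once this boundary bookkeeping is complete, the hypothesis of Theorem \ref{induced} is verified and the theorem follows.
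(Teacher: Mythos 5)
Your high-level plan is the same as the paper's: verify the hypothesis of Theorem \ref{induced} for the pair $(D,D')$, and your reduction to ``each maximal strong vertex cover $C$ of $D$ contains at most one maximal strong vertex cover of $D'$'' is a legitimate sufficient condition (it is also what the paper's own argument establishes), as is your extraction of the key consequence that $x_{n-1}$ cannot lie in $L_3^{D'}(C')$ for any strong vertex cover $C'$ of $D'$ (though your ``equivalently, $N_{D'}(x_{n-1})\not\subseteq C'$'' is not equivalent: it fails whenever $x_{n-1}\notin C'$; the correct consequence is that no strong cover contains all three of $x_{n-2},x_{n-1},x_n$). The genuine gap is that the entire combinatorial content of the theorem --- your ``interior rigidity'' --- is announced but not proven, and the mechanism you sketch for it does not work. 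If $x_j\in M_2\setminus M_1$ is the rightmost discrepancy with $j\le n-2$ and you toggle $x_j$ into $M_1$, then discovering that $M_1\cup\{x_j\}$ has an unwitnessed $L_3^{D'}$-vertex is not a contradiction: it is exactly what maximality of $M_1$ predicts. Any contradiction must instead come from strongness of $C$ inside the ambient graph $D$ (note $N_D[x_j]\subseteq M_1\cup M_2\subseteq C$, so $x_j\in L_3^{D}(C)$ and $C$ supplies an SVC witness in $D$), and the obstruction you never confront is that SVC witnesses only transfer upward, from a cover to a larger cover (Lemma \ref{svc1}); a witness for $x_j$ inside the large cover $C$ cannot in general be pulled down to the smaller cover $M_1\cup\{x_j\}$, because $L_1$-membership is relative to the cover. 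Moreover, interior rigidity is genuinely not a property of $D'$ alone: maximal strong vertex covers of a weighted oriented path can differ arbitrarily deep in the interior (see the families $C_{\alpha},C_{\beta},C_{\gamma}$ in the proof of Theorem \ref{path5}), so your purely $D'$-local toggling, which invokes only Lemma \ref{L3} and Remark \ref{s.v.1}, cannot suffice.

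The paper avoids this difficulty by never comparing two maximal covers at all: from $x_{n-1}\notin L_3^D(C)$ it concludes that one of $x_{n-2},x_{n-1},x_n$ lies outside $C$, and in each of the three resulting cases it shows that the full trace $C\cap V(D')$ (or, in the hardest subcase, $[C\cap V(D')]\setminus\{x_n\}$) is itself a strong vertex cover of $D'$; since that set contains every strong cover of $D'$ contained in $C$, uniqueness of the maximal one is immediate. The reason the trace works where toggling fails is that for $C_1=C\cap V(D')$ one has $N_{D'}^{+}(x_j)\cap C_1^{c}=N_{D}^{+}(x_j)\cap C^{c}\cap V(D')$, so non-membership in $L_1$ transfers from $(D,C)$ to $(D',C_1)$ and the $D$-witnesses descend to $D'$. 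Finally, your boundary step is also too quick: at $j=n$ the key consequence gives no contradiction, since the hypothesis restricts the weights of in-neighbours of $x_{n-1}$, not of $x_{n-1}$ itself, and a configuration with $x_n\in L_3^{D'}(M_2)$ witnessed by $x_{n-1}$ is perfectly possible; disposing of it is precisely the delicate Case (3) (subcases (3.a) and (3.b)) of the paper's proof, where one must show $x_n$ belongs to no strong cover of $D'$ inside $C$ before removing it from the trace. As it stands, your proposal is a plausible outline whose two hardest steps are missing, and the sketched route to the first of them is based on an incorrect contradiction mechanism.
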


\begin{proof}
	Let $ C $ be a maximal strong vertex cover of $ D.  $  We claim  every strong vertex cover of $ D^{\prime} $ contained in $C$, is subset of at most one maximal strong vertex cover of $ D^{\prime} $  contained in $C$. Since  $N_{D}^- (x_{n-1}) \cap V^+(D) = \phi$, by Remark \ref{s.v.1}, we have $x_{n-1} \notin L_3^{D}(C)$. Then by Lemma \ref{L3},    
  one of $x_{n-2},$ $x_{n-1}$  and  $x_{n} \notin C$. Therefore we consider the following three cases.

	\textbf{Case (1)} Assume that $x_n \notin C$. We claim $ C_1 =    C \cap V(D^{\prime}) $ is a strong vertex
	cover of $  D^{\prime} $. Note that $ C_1  $ is a  vertex
	cover of $  D^{\prime} $. Since $x_n \notin C_1$,  $x_{n-1} \in C_1$. Let $x_i \in L_3^{D^{\prime}}({{C_1}})$.
	Then  by Lemma \ref{L3}, $x_i \in \{x_1, x_2, \ldots , x_{n-2}\}$ and   $x_i \in L_3^{D}({C})$.
	Since $ C $ is strong  in $D$,  there exists  some  $x_j \in  N_{D}^- (x_{i}) \cap V^+(D)\cap  [L_2^{D}(C) \cup  L_3^{D}({C})]$.   Note that  orientations of edges from $x_1$ to $x_{n}$  and  weights  of  vertices from $x_1$ to $x_{n-1}$  are same in both the paths $D$  and  $D^{\prime}$. Here    $x_j \notin L_1^{D}(C)$ implies $ N_{D}^+ (x_j)\cap {{C}}^c = \phi $   and $x_j \in \{x_1, x_2, \ldots , x_{n-1}\}$.  Since  $C_1 =    C \cap V(D^{\prime})$, we have $ N_{D^{\prime}}^+ (x_j)\cap {{C_1}}^c = N_{D}^+ (x_j)\cap {{C}}^c = \phi $. Thus $x_j \notin L_1^{D^{\prime}}(C_1)$.
	So  
	$x_j \in N_{D^{\prime}}^- (x_{i}) \cap V^+(D^{\prime})\cap  [L_2^{D^{\prime}}({{C_1}}) \cup  L_3^{D^{\prime}}({{C_1}})]$. Hence $ C_1 $ is a strong vertex cover of $D^{\prime}$ and it contains  each strong vertex cover of $ D^{\prime} $ contained in $C$.    Therefore 
	every strong vertex cover of $ D^{\prime} $ contained in $C$, is subset of one   strong vertex cover of $ D^{\prime} $  contained in $C$.

	\textbf{Case (2)}	Assume  that $x_{n-1} \notin C$. We claim $ C_1 =    C \cap V(D^{\prime}) $ is a strong vertex
	cover of $  D^{\prime} $. Note that $ C_1  $ is a  vertex
	cover of $  D^{\prime} $. Here $x_{n-1} \notin C_1$.  This implies $x_{n-2}$ and $x_{n} \in C_1$. Let $x_i \in L_3^{D^{\prime}}({{C_1}})$. By Lemma \ref{L3}, $x_i \in \{x_1, x_2,  \ldots , x_{n-3}\}$.
	Then  by the same
	argument as in Case (1),   our claim follows and every strong vertex cover of $ D^{\prime} $ contained in $C$, is subset of one   strong vertex cover of $ D^{\prime} $ contained in $C$.

	\textbf{Case (3)}	Assume that $x_{n-2} \notin C$. If $x_n \notin C$, then by Case (1),   every strong vertex cover of $ D^{\prime} $ contained in $C$, is subset of one  strong vertex cover of $ D^{\prime} $ contained in $C$. Now we assume  $x_n \in C$.
	
	$\textbf{\underline{\mbox{Case  (3.a)}}}$  Suppose $N_{D^{\prime}}^- (x_{n}) \cap V^+(D^{\prime})\cap  [L_2^{D^{\prime}}({{C_1}}) \cup  L_3^{D^{\prime}}({{C_1}})] \neq \phi$. We claim $ C_1 =    C \cap V(D^{\prime}) $ is a strong vertex
	cover of $  D^{\prime} $. Note that $ C_1  $ is a  vertex
	cover of $  D^{\prime} $. Here $x_{n-2} \notin C_1$.  This implies $x_{n-3}$ and  $x_{n-1} \in C_1$. Let $x_i \in L_3^{D^{\prime}}(C_1)$. By Lemma \ref{L3}, $x_i \in \{x_1, x_2,  \ldots , x_{n-4},x_n\}$.  If  $x_i \in \{x_1, x_2,  \ldots , x_{n-4}\}$, then by the similar
	argument as in Case (1),  $x_i$ satisfies SVC condition on $C_1$. Also $x_n \in L_3^{D^{\prime}}(C_1)$ satisfies SVC condition on $C_1$, by our assumption. Hence $ C_1 $ is strong in $D^{\prime}$ and    every strong vertex cover of $ D^{\prime} $ contained in $C$, is subset of one   strong vertex cover of $ D^{\prime} $ contained in $C$.
	
	$\textbf{\underline{\mbox{Case  (3.b)}}}$ Suppose $N_{D^{\prime}}^- (x_{n}) \cap V^+(D^{\prime})\cap  [L_2^{D^{\prime}}({{C_1}}) \cup  L_3^{D^{\prime}}({{C_1}})] = \phi$.  Let $ C^{\prime}$  is a strong vertex  cover  of  $ D^{\prime} $  contained in $C$.  Suppose  $x_n \in C^{\prime}$.
	Since    $x_{n-2} \notin C^{\prime}$,
	we have  $  x_{n-1} \in C^{\prime}$ and so by Lemma \ref{L3}, $x_n \in L_3^{D^{\prime}}(C^{\prime})$. Since $C^{\prime}  $  is strong,  $x_n$ satisfies SVC condition on $C^{\prime}$.  By Lemma \ref{svc1},  $x_n \in L_3^{D^{\prime}}(C_1)$ and it satisfies SVC condition on $C_1$, i.e.,
 $N_{D^{\prime}}^- (x_{n}) \cap V^+(D^{\prime})\cap  [L_2^{D^{\prime}}({{C_1}}) \cup  L_3^{D^{\prime}}({{C_1}})] \neq \phi$. But it  contradicts our assumption. Therefore   $x_n \notin C^{\prime}$. Hence we can say that    $x_n$ does not belong to any  strong vertex  cover  of  $ D^{\prime} $  contained in $C$.  Let $C_2 = [C \cap V(D^{\prime})] \setminus \{x_n\}$. We claim that $ C_2 $ is a strong vertex cover of $  D^{\prime} $. Since $ [C \cap V(D^{\prime})]  $ is a  vertex
 cover of $  D^{\prime} $ and $ x_{n-2} \notin  [C \cap V(D^{\prime})]  $, we have $ x_{n-1} \in  [C \cap V(D^{\prime})]  $ and    so $ C_2 $ is a  vertex cover of $  D^{\prime} $.  Here $x_{n-2} \notin C_2$ implies $x_{n-3}$ and  $x_{n-1} \in C_2$.  Let $x_i \in L_3^{D^{\prime}}({{C_2}})$. By Lemma \ref{L3},  $x_i \in \{x_1, x_2,  \ldots , x_{n-4}\}$.  By the similar
 argument as in Case (1), we can show $ C_2 $ is  strong and  every strong vertex cover of $ D^{\prime} $ contained in $C$, is subset of one   strong vertex cover of $ D^{\prime} $ contained in $C$.

In all the cases, our claim follows  and the proof follows from Theorem \ref{induced}.        
\end{proof}

\begin{corollary}\label{inducedpath2} Let $ D $ be a weighted oriented path with $ V(D) = \{x_1, x_2,  \ldots , x_n, y_1, y_2,  \ldots ,\\ y_m\}. $ Let $ D^{\prime} $ be the induced weighted oriented path of $D$ with $ V(D^{\prime}) = \{y_1, y_2,  \ldots , y_m\} $ and $N_{D}^- (y_{2}) \cap V^+(D) = \phi.$ If $ {I(D^{\prime})}^{(s)} \neq {I(D^{\prime})}^s $ for some $s \geq 2,$ then $ {I(D)}^{(s)} \neq {I(D)}^s $.
\end{corollary}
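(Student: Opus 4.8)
The plan is to deduce this Corollary from Theorem \ref{inducedpath} by exploiting the reflective symmetry of a path: a path carries no preferred direction of labelling, so relabelling its vertices in the reverse order is an isomorphism of weighted oriented graphs. Concretely, reading the path as $x_1 - x_2 - \cdots - x_n - y_1 - y_2 - \cdots - y_m$, the induced path $D^{\prime}$ is the terminal segment $y_1 - \cdots - y_m$, and $y_1$ is the unique vertex of $D^{\prime}$ adjacent in $D$ to a vertex outside $D^{\prime}$ (namely $x_n$). I would first record that any relabelling of the vertices induces a $k$-algebra automorphism $\varphi$ of $R$ which sends $I(D)$ and $I(D^{\prime})$ to the edge ideals of the relabelled graphs, and which commutes with the formation of ordinary and symbolic powers (an automorphism permutes the associated primes and the corresponding localizations). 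Hence such a relabelling preserves the truth value of ``$I(D)^{(s)} = I(D)^s$'' and of ``$I(D^{\prime})^{(s)} = I(D^{\prime})^s$.''

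Next I would set up the reversal explicitly. Relabel by $x_i^{\prime} := y_{m+1-i}$ for $1 \leq i \leq m$ and $y_j^{\prime} := x_{n+1-j}$ for $1 \leq j \leq n$, so that in the new names the path reads $x_1^{\prime} - \cdots - x_m^{\prime} - y_1^{\prime} - \cdots - y_n^{\prime}$. Under this relabelling the induced path $D^{\prime}$ becomes the \emph{initial} segment on $\{x_1^{\prime}, \ldots, x_m^{\prime}\} = \{y_m, \ldots, y_1\}$, with the same underlying vertex set as $\{y_1, \ldots, y_m\}$. Its second-from-the-end vertex is $x_{m-1}^{\prime} = y_{m+1-(m-1)} = y_2$. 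Since a relabelling is a bijection, in-neighbourhoods and the set $V^+(D)$ transport along with it, so the hypothesis $N_D^-(y_2) \cap V^+(D) = \phi$ is literally the statement $N_D^-(x_{m-1}^{\prime}) \cap V^+(D) = \phi$. Thus the relabelled pair $(D, D^{\prime})$ satisfies exactly the hypotheses of Theorem \ref{inducedpath}, with the roles of $n$ and $m$ interchanged.

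I would then apply Theorem \ref{inducedpath} to the relabelled data: $I(D^{\prime})^{(s)} \neq I(D^{\prime})^s$ for some $s \geq 2$ forces $I(D)^{(s)} \neq I(D)^s$. Transporting this inequality back through $\varphi$ gives the conclusion for the original labelling, which is the statement of the Corollary. The only point deserving real care is the index bookkeeping in the reversal, where I must confirm that $y_2$ (and not $y_{m-1}$) is the vertex occupying the ``second-from-the-boundary'' position of the induced initial segment, so that Theorem \ref{inducedpath}'s neighbourhood condition matches the Corollary's hypothesis; every other ingredient — the preservation of symbolic and ordinary powers under a variable renaming — is formal and immediate.
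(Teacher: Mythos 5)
Your proof is correct, and it reaches the corollary by a different mechanism than the paper does. The paper's entire proof is ``It follows by the similar argument as in Theorem \ref{inducedpath}'': the authors intend the reader to re-run that theorem's case analysis on the terminal segment, with $y_1$, $y_2$, $y_3$ playing the roles of $x_n$, $x_{n-1}$, $x_{n-2}$ (the hypothesis $N_D^-(y_2)\cap V^+(D)=\phi$ forces $y_2\notin L_3^D(C)$ for every maximal strong vertex cover $C$, so one of $y_1,y_2,y_3$ lies outside $C$, and the three cases are then mirrored). You instead use Theorem \ref{inducedpath} as a black box: the reversal relabelling $x_i':=y_{m+1-i}$, $y_j':=x_{n+1-j}$ is an isomorphism of weighted oriented graphs; the induced $k$-algebra automorphism of $R$ carries $I(D)$ and $I(D')$ to the edge ideals of the relabelled graphs, commutes with ordinary powers, and commutes with symbolic powers because it permutes associated primes and the corresponding localizations in the definition $I^{(s)}=\bigcap_{P\in \Ass I}(I^sR_P\cap R)$; and your index check $x_{m-1}'=y_{m+1-(m-1)}=y_2$ correctly identifies the corollary's hypothesis with the theorem's condition $N_D^-(x_{n-1})\cap V^+(D)=\phi$ after interchanging $(n,m)$. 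Both arguments rest on the same left--right symmetry of a path; what your reduction buys is that the symmetry is made rigorous in one step, so nothing from the long proof of Theorem \ref{inducedpath} needs to be repeated or re-verified (and the same device would equally streamline Corollary \ref{inducedpath3}), whereas the paper's version stays entirely inside the strong-vertex-cover formalism, avoiding any relabelling apparatus, but at the cost of leaving the mirrored case analysis to the reader.
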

\begin{proof}
It follows by the similar argument as in Theorem \ref{inducedpath}.
\end{proof}

\begin{corollary}\label{inducedpath3} Let $ D $ be a weighted oriented path with $ V(D) = \{x_1, x_2,  \ldots , x_n, y_1, y_2,  \ldots ,\\ y_m,z_1,z_2,\ldots,z_l\}.$  Let $ D^{\prime} $ be the induced weighted oriented path of $D$ with $ V(D^{\prime}) = \{y_1, y_2,  \ldots , y_m\} $, where $N_{D}^- (y_{2}) \cap V^+(D) = \phi$  and $N_{D}^- (y_{m-1}) \cap V^+(D) = \phi.$ If $ {I(D^{\prime})}^{(s)} \neq {I(D^{\prime})}^s $ for some $s \geq 2,$ then $ {I(D)}^{(s)} \neq {I(D)}^s $.
\end{corollary}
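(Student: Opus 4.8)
The plan is to deduce this two-sided statement from the two one-sided results already in hand, namely Theorem \ref{inducedpath} and Corollary \ref{inducedpath2}, by factoring the inclusion $D^{\prime} \subseteq D$ through an intermediate induced path. Concretely, I would let $D^{\prime\prime}$ be the induced weighted oriented path of $D$ on the vertex set $\{y_1,\ldots,y_m,z_1,\ldots,z_l\}$, so that $D^{\prime}$ sits inside $D^{\prime\prime}$ as a prefix and $D^{\prime\prime}$ sits inside $D$ as a suffix. The strategy is to propagate the inequality $I(D^{\prime})^{(s)} \neq I(D^{\prime})^s$ first up to $D^{\prime\prime}$ and then up to $D$, each step being a direct application of one of the one-sided theorems.

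First I would handle the step $D^{\prime} \subseteq D^{\prime\prime}$. Here $D^{\prime}$ is the induced path on the first $m$ vertices of $D^{\prime\prime}$, so Theorem \ref{inducedpath} applies provided $N_{D^{\prime\prime}}^-(y_{m-1}) \cap V^+(D^{\prime\prime}) = \phi$ (the vertex $y_{m-1}$ playing the role of $x_{n-1}$). I would check that this condition descends from the hypothesis $N_D^-(y_{m-1}) \cap V^+(D) = \phi$: since $y_{m-1}$ is an interior vertex of $D^{\prime\prime}$, its neighbours $y_{m-2},y_m$ already lie in $D^{\prime\prime}$ and hence $N_{D^{\prime\prime}}^-(y_{m-1}) = N_D^-(y_{m-1})$; moreover passing to an induced digraph can only turn non-sources into sources and so can only shrink $V^+$, whence $N_{D^{\prime\prime}}^-(y_{m-1}) \cap V^+(D^{\prime\prime}) \subseteq N_D^-(y_{m-1}) \cap V^+(D) = \phi$. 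Thus Theorem \ref{inducedpath} yields $I(D^{\prime\prime})^{(s)} \neq I(D^{\prime\prime})^s$.

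Next I would handle the step $D^{\prime\prime} \subseteq D$. Now $D^{\prime\prime}$ is the induced path on the last $m+l$ vertices of $D$, i.e.\ a suffix, and its second vertex is $y_2$; Corollary \ref{inducedpath2} applies verbatim, since its hypothesis $N_D^-(y_2) \cap V^+(D) = \phi$ is exactly the first assumption of the present statement. This gives $I(D)^{(s)} \neq I(D)^s$, and chaining the two implications completes the argument.

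The only genuine obstacle is the weight bookkeeping in the first step: one must be sure the boundary condition at $y_{m-1}$ is not destroyed when the vertices $x_1,\ldots,x_n$ are deleted to form $D^{\prime\prime}$. This is precisely where I would be careful, but it is benign, as deleting vertices never raises a weight, so $V^+$ can only lose elements while the in-neighbourhood of the interior vertex $y_{m-1}$ is untouched. Alternatively, one could give a direct proof mirroring the case analysis of Theorem \ref{inducedpath} but carried out simultaneously at both junctions $y_1$ and $y_m$: the two hypotheses force, via Remark \ref{s.v.1} and Lemma \ref{L3}, that one of $y_1,y_2,y_3$ and one of $y_{m-2},y_{m-1},y_m$ lies outside any given maximal strong vertex cover $C$ of $D$, which insulates $D^{\prime}$ at both ends and lets one show that $C \cap V(D^{\prime})$, after possibly deleting $y_1$ and/or $y_m$, is the unique maximal strong vertex cover of $D^{\prime}$ contained in $C$, thereby verifying the hypothesis of Theorem \ref{induced}.
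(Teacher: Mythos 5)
Your proof is correct and is essentially the paper's own argument run in the opposite order: the paper factors through the intermediate induced path on $\{x_1,\ldots,x_n,y_1,\ldots,y_m\}$, applying Corollary \ref{inducedpath2} first and then Theorem \ref{inducedpath}, whereas you factor through $\{y_1,\ldots,y_m,z_1,\ldots,z_l\}$ and apply the same two results in the reverse order. Your explicit verification that the condition at $y_{m-1}$ descends to the intermediate path (in-neighbourhood unchanged, $V^+$ only shrinks) is sound, and the paper needs the analogous descent for $y_2$ at its first step but leaves it implicit.
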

\begin{proof}
Let $ D_1 $ be the induced weighted oriented path of $D$ with $ V(D_1) = \{x_1, x_2,  \ldots , x_n,\\y_1, y_2,  \ldots , y_m\} $. Here  $ D^{\prime} $ is an induced weighted oriented path of $D_1$. 
Assume that  $ {I(D^{\prime})}^{(s)} \neq {I(D^{\prime})}^s $ for some $s \geq 2$. Since $N_{D}^- (y_{2}) \cap V^+(D) = \phi$, by  Corollary \ref{inducedpath2},  $ {I(D_1)}^{(s)} \neq {I(D_1)}^s $. Note that $N_{D}^- (y_{m-1}) \cap V^+(D) = \phi$. Then by Theorem \ref{inducedpath},  $ {I(D_1)}^{(s)} \neq {I(D_1)}^s $  implies $ {I(D)}^{(s)} \neq {I(D)}^s $. 	
\end{proof}

\begin{remark}
 When we try to find the necessary and sufficient condition for the equality of ordinary and symbolic powers of edge ideals of weighted oriented paths, 	using the above results, we can show the inequality of ordinary and symbolic powers of edge ideals of a larger class of weighted oriented paths by studying the  inequality of ordinary and symbolic powers of edge ideals of a smaller class of weighted oriented paths.
 
 \hspace*{0.5cm}For example consider the  weighted oriented path $D^{\prime}$ as in Figure \ref{fig.5}.  Then $I(D^{\prime}) =   (x_1x_2^{w_2},x_2x_3,x_3x_4,x_4x_5)$.
 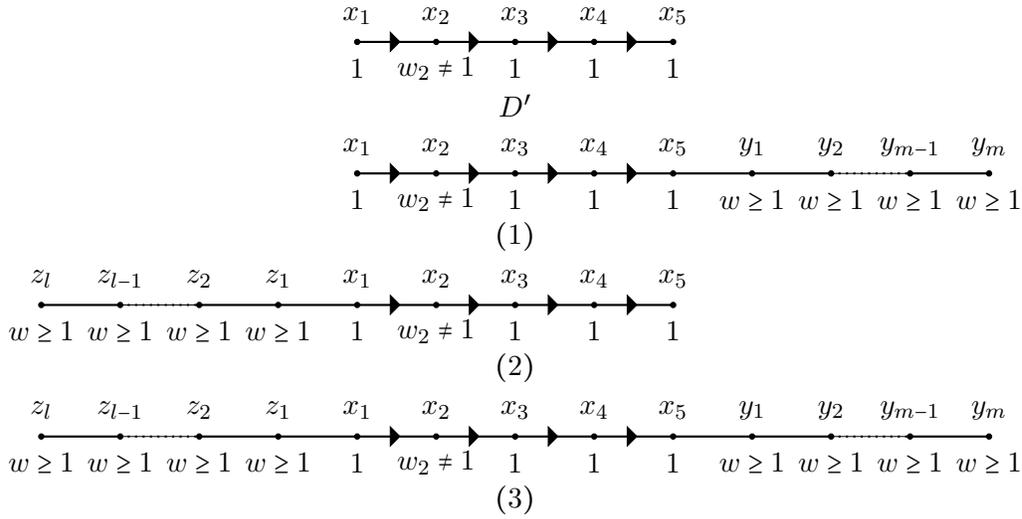
\begin{figure}[!ht]
 	\begin{tikzpicture}[scale=0.7]
 		\begin{scope}[ thick, every node/.style={sloped,allow upside down}] 
 			\definecolor{ultramarine}{rgb}{0.07, 0.04, 0.56} 
 			\definecolor{zaffre}{rgb}{0.0, 0.08, 0.66}
 			
 			\draw [fill, black](0,0) --node {\midarrow}(1.5,0);  
 			\draw [fill, black](1.5,0) --node {\midarrow}(3,0);
 			\draw [fill, black](3,0) --node {\midarrow}(4.5,0);
 			\draw [fill, black](4.5,0) --node {\midarrow}(6,0);

 			\draw[fill] [fill] (0,0) circle [radius=0.04];
 			\draw[fill] [fill] (1.5,0) circle [radius=0.04];
 			\draw[fill] [fill] (3,0) circle [radius=0.04];
 			\draw[fill] [fill] (4.5,0) circle [radius=0.04];
 			\draw[fill] [fill] (6,0) circle [radius=0.04];

 			\node at (0,0.5) {$x_1$};
 			\node at (1.5,0.5) {$x_2$};    
 			\node at (3,0.5) {$x_{3}$};  
 			\node at (4.5,0.5) {$x_{4}$};
 			\node at (6,0.5) {$x_{5}$};

 			\node at (0,-0.5) {$1$};
 			\node at (1.5,-0.5) {$w_2 \neq 1$};
 			\node at (3,-0.5) {$1$};  
 			\node at (4.5,-0.5) {$1$};
 			\node at (6,-0.5) {$1$};

 			\node at (3,-1.2) {$D^{\prime}$};

 			\draw [fill, black](0,-2.5) --node {\midarrow}(1.5,-2.5);  
 			\draw [fill, black](1.5,-2.5) --node {\midarrow}(3,-2.5);
 			\draw [fill, black](3,-2.5) --node {\midarrow}(4.5,-2.5);
 			\draw [fill, black](4.5,-2.5) --node {\midarrow}(6,-2.5);
 			\draw [fill, black](6,-2.5) --(7.5,-2.5);  
 			\draw [fill, black](7.5,-2.5) --(9,-2.5);
 			\draw [fill, black] [dotted,thick] (9,-2.5) -- (10.5,-2.5);
 			\draw [fill, black](10.5,-2.5) -- (12,-2.5);
 			
 			\draw[fill] [fill] (0,-2.5) circle [radius=0.04];
 			\draw[fill] [fill] (1.5,-2.5) circle [radius=0.04];
 			\draw[fill] [fill] (3,-2.5) circle [radius=0.04];
 			\draw[fill] [fill] (4.5,-2.5) circle [radius=0.04];
 			\draw[fill] [fill] (6,-2.5) circle [radius=0.04];
 			\draw[fill] [fill] (7.5,-2.5) circle [radius=0.04];
 			\draw[fill] [fill] (9,-2.5) circle [radius=0.04];
 			\draw[fill] [fill] (10.5,-2.5) circle [radius=0.04];
 			\draw[fill] [fill] (12,-2.5) circle [radius=0.04];
 			
 			\node at (0,-2) {$x_1$};
 			\node at (1.5,-2) {$x_2$};    
 			\node at (3,-2) {$x_{3}$};  
 			\node at (4.5,-2) {$x_{4}$};
 			\node at (6,-2) {$x_{5}$};
 			\node at (7.5,-2) {$y_1$};    
 			\node at (9,-2) {$y_{2}$};  
 			\node at (10.5,-2) {$y_{m-1}$};
 			\node at (12,-2) {$y_{m}$};
 			
 			\node at (0,-3) {$1$};
 			\node at (1.5,-3) {$w_2 \neq 1$};
 			\node at (3,-3) {$1$};  
 			\node at (4.5,-3) {$1$};
 			\node at (6,-3) {$1$};
 			\node at (7.5,-3) {$ w \geq 1$};    
 			\node at (9,-3) {$w \geq 1$};  
 			\node at (10.5,-3) {$w \geq 1$};
 			\node at (12,-3) {$w \geq 1$};
 			
 			\node at (3,-3.7) {$(1)$};

 	 \draw [fill, black](-6,-5) --(-4.5,-5);  
 	\draw [fill, black] [dotted,thick]   (-4.5,-5) -- (-3,-5);
 	\draw [fill, black](-3,-5) --(-1.5,-5);
 	\draw [fill, black](-1.5,-5) --(0,-5);			
 	\draw [fill, black](0,-5) --node {\midarrow}(1.5,-5);  
 	\draw [fill, black](1.5,-5) --node {\midarrow}(3,-5);
 	\draw [fill, black](3,-5) --node {\midarrow}(4.5,-5);
 	\draw [fill, black](4.5,-5) --node {\midarrow}(6,-5);

	\draw[fill] [fill] (-6,-5) circle [radius=0.04];
\draw[fill] [fill] (-4.5,-5) circle [radius=0.04];
\draw[fill] [fill] (-3,-5) circle [radius=0.04];
\draw[fill] [fill] (-1.5,-5) circle [radius=0.04]; 
 	\draw[fill] [fill] (0,-5) circle [radius=0.04];
 	\draw[fill] [fill] (1.5,-5) circle [radius=0.04];
 	\draw[fill] [fill] (3,-5) circle [radius=0.04];
 	\draw[fill] [fill] (4.5,-5) circle [radius=0.04];
 	\draw[fill] [fill] (6,-5) circle [radius=0.04];

 \node at (-6,-4.5) {$z_l$};
 \node at (-4.5,-4.5) {$z_{l-1}$};    
 \node at (-3,-4.5) {$z_{2}$};  
 \node at (-1.5,-4.5) {$z_{1}$};
 	\node at (0,-4.5) {$x_1$};
 	\node at (1.5,-4.5) {$x_2$};    
 	\node at (3,-4.5) {$x_{3}$};  
 	\node at (4.5,-4.5) {$x_{4}$};
 	\node at (6,-4.5) {$x_{5}$};

 	\node at (-6,-5.5) {$w \geq 1$};
 	\node at (-4.5,-5.5) {$w \geq 1$};    
 	\node at (-3,-5.5) {$w \geq 1$};  
 	\node at (-1.5,-5.5) {$w \geq 1$};
 	\node at (0,-5.5) {$1$};
 	\node at (1.5,-5.5) {$w_2 \neq 1$};
 	\node at (3,-5.5) {$1$};  
 	\node at (4.5,-5.5) {$1$};
 	\node at (6,-5.5) {$1$};

 	\node at (3,-6.2) {$(2)$};
 	
 \draw [fill, black](-6,-7.5) --(-4.5,-7.5);  
 \draw [fill, black] [dotted,thick]   (-4.5,-7.5) -- (-3,-7.5);
 \draw [fill, black](-3,-7.5) --(-1.5,-7.5);
 \draw [fill, black](-1.5,-7.5) --(0,-7.5);	
 \draw [fill, black](0,-7.5) --node {\midarrow}(1.5,-7.5);  
 \draw [fill, black](1.5,-7.5) --node {\midarrow}(3,-7.5);
 \draw [fill, black](3,-7.5) --node {\midarrow}(4.5,-7.5);
 \draw [fill, black](4.5,-7.5) --node {\midarrow}(6,-7.5);
 \draw [fill, black](6,-7.5) --(7.5,-7.5);  
 \draw [fill, black](7.5,-7.5) --(9,-7.5);
 \draw [fill, black] [dotted,thick] (9,-7.5) -- (10.5,-7.5);
 \draw [fill, black](10.5,-7.5) -- (12,-7.5);

 \draw[fill] [fill] (-6,-7.5) circle [radius=0.04];
 \draw[fill] [fill] (-4.5,-7.5) circle [radius=0.04];
 \draw[fill] [fill] (-3,-7.5) circle [radius=0.04];
 \draw[fill] [fill] (-1.5,-7.5) circle [radius=0.04];
 \draw[fill] [fill] (0,-7.5) circle [radius=0.04];
 \draw[fill] [fill] (1.5,-7.5) circle [radius=0.04];
 \draw[fill] [fill] (3,-7.5) circle [radius=0.04];
 \draw[fill] [fill] (4.5,-7.5) circle [radius=0.04];
 \draw[fill] [fill] (6,-7.5) circle [radius=0.04];
 \draw[fill] [fill] (7.5,-7.5) circle [radius=0.04];
 \draw[fill] [fill] (9,-7.5) circle [radius=0.04];
 \draw[fill] [fill] (10.5,-7.5) circle [radius=0.04];
 \draw[fill] [fill] (12,-7.5) circle [radius=0.04];

 \node at (-6,-7) {$z_l$};
 \node at (-4.5,-7) {$z_{l-1}$};    
 \node at (-3,-7) {$z_{2}$};  
 \node at (-1.5,-7) {$z_{1}$};
 \node at (0,-7) {$x_1$};
 \node at (1.5,-7) {$x_2$};    
 \node at (3,-7) {$x_{3}$};  
 \node at (4.5,-7) {$x_{4}$};
 \node at (6,-7) {$x_{5}$};
 \node at (7.5,-7) {$y_1$};    
 \node at (9,-7) {$y_{2}$};  
 \node at (10.5,-7) {$y_{m-1}$};
 \node at (12,-7) {$y_{m}$};

 \node at (-6,-8) {$w \geq 1$};
 \node at (-4.5,-8) {$w \geq 1$};    
 \node at (-3,-8) {$w \geq 1$};  
 \node at (-1.5,-8) {$w \geq 1$};
 \node at (0,-8) {$1$};
 \node at (1.5,-8) {$w_2 \neq 1$};
 \node at (3,-8) {$1$};  
 \node at (4.5,-8) {$1$};
 \node at (6,-8) {$1$};
 \node at (7.5,-8) {$ w \geq 1$};    
 \node at (9,-8) {$w \geq 1$};  
 \node at (10.5,-8) {$w \geq 1$};
 \node at (12,-8) {$w \geq 1$};
 
 \node at (3,-8.7) {$(3)$};


 		\end{scope}
 	\end{tikzpicture}
 	\caption{Three classes of weighted oriented paths with common induced weighted oriented path $D^{\prime}$.}\label{fig.5}
\end{figure} 	
By Lemma \ref{atmost}, $I(D^{\prime})^{(3)} \neq I(D^{\prime})^3$.  Now  consider the following three class of paths:

Class (1):  Set of all weighted oriented paths on the vertex set $\{x_1, x_2,  \ldots , x_5, y_1, y_2,  \ldots , y_m\}$ containing the induced weighted oriented path $D^{\prime}$ (as in  Figure \ref{fig.5}),

Class (2):  Set of all weighted oriented paths on the vertex set $\{ z_l, \ldots , z_2 , z_1, x_1, x_2,  \ldots , x_5\}$ containing the induced weighted oriented path $D^{\prime}$ (as in Figure \ref{fig.5}),

Class (3):  Set of all weighted oriented paths on the vertex set $\{ z_l, \ldots , z_2 , z_1, x_1, x_2,  \ldots , x_5,\\ y_1, y_2,  \ldots , y_m\}$ containing the induced weighted oriented path $D^{\prime}$ (as in Figure \ref{fig.5}), 

and in  Figure \ref{fig.5}, where the directions are not mentioned, it can be any direction.

Let $D_1,D_2$  and $D_3$ be any weighted oriented paths of class (1), (2) and (3), respectively.  By  Theorem \ref{inducedpath},    Corollary \ref{inducedpath2}  and  Corollary \ref{inducedpath3}, $ {I(D^{\prime})}^{(3)} \neq {I(D^{\prime})}^3 $   implies    $  {I(D_1)}^{(3)} \neq {I(D_1)}^3 $,  $ {I(D_2)}^{(3)} \neq {I(D_2)}^3 $  and    $ {I(D_3)}^{(3)} \neq {I(D_3)}^3 $, respectively.        
\end{remark}

\section{Symbolic powers of union of two naturally oriented paths  with a common sink vertex}

In this section, we give the necessary and sufficient condition for the equality of ordinary and symbolic powers of edge ideal of union of two naturally oriented paths  with a common sink vertex.

\begin{notation}\label{notation1}
	Let	$ D $ be a weighted oriented  path with
	$ V(D) = \{y_1, y_2, \ldots , y_m, z_1, x_n,   \ldots ,\\  x_2, x_1\}$, $E(D) = \{(y_i,y_{i+1}) ~|~ 1 \leq i \leq m-1 \} \cup \{(y_m,z_{1}), (x_n,z_{1})\} \cup  \{( x_i,  x_{i+1}) ~|~ 1 \leq i \leq n-1 \}$   and $ w(z_1)=1 $ (see Figure \ref{fig.7}).

\begin{figure}[!ht]
	\begin{tikzpicture}[scale=0.8]
		\begin{scope}[ thick, every node/.style={sloped,allow upside down}] 
			\definecolor{ultramarine}{rgb}{0.07, 0.04, 0.56} 
			\definecolor{zaffre}{rgb}{0.0, 0.08, 0.66}
			\draw[fill, black] (-6,0) --node {\midarrow}(-4.5,0);
			\draw[fill, black][dotted,thick] (-3,0)   -- (-4.5,0);
			\draw[fill, black]  (-3,0)  --node {\midarrow}(-1.5,0);
			\draw[fill, black][dotted,thick] (0,0) --(-1.5,0);   
			\draw[fill, black]  (0,0)--node {\midarrow}(1.5,0);
			\draw[fill, black] (1.5,0) --node {\midarrow}(3,0);
			\draw[fill, black] (4.5,0) --node {\midarrow}(3,0);
			\draw[fill, black] (6,0) --node {\midarrow}(4.5,0);
			\draw[fill, black] [dotted,thick](6,0) -- (7.5,0);
			\draw [fill, black](9,0) --node {\midarrow}(7.5,0);
			\draw[fill, black] [dotted,thick] (9,0) --(10.5,0);
			\draw[fill, black]  (12,0) --node {\midarrow}(10.5,0);
			
			\draw[fill] [fill] (-1.5,0) circle [radius=0.04];
			\draw[fill] [fill] (-3,0) circle [radius=0.04];
			\draw[fill] [fill] (-4.5,0) circle [radius=0.04];
			\draw[fill] [fill] (-6,0) circle [radius=0.04];  
			\draw [fill] [fill] (0,0) circle [radius=0.04];
			\draw[fill] [fill] (1.5,0) circle [radius=0.04];
			\draw[fill] [fill] (3,0) circle [radius=0.04];
			\draw[fill] [fill] (4.5,0) circle [radius=0.04];
			\draw[fill] [fill] (6,0) circle [radius=0.04];
			\draw[fill] [fill] (7.5,0) circle [radius=0.04];
			\draw[fill] [fill] (9,0) circle [radius=0.04];
			\draw[fill] [fill] (10.5,0) circle [radius=0.04];
			\draw[fill] [fill] (12,0) circle [radius=0.04];
			
			\node at (-6,0.5) {$y_1$};
			\node at (-4.5,0.5) {$y_{2}$};
			\node at (-3,0.5) {$y_{l}$};
			\node at (-1.5,0.5) {$y_{l+1}$};
			\node at (0,0.5) {$y_{m-1}$};
			\node at (1.5,0.5) {$y_m$};
			\node at (3,0.5) {$z_1$};
			\node at (4.5,0.5) {$x_{n}$};
			\node at (6,0.5) {$x_{n-1}$};
			\node at (7.5,0.5) {$x_{k+1}$};
			\node at (9,0.5) {$x_{k}$};  
			\node at (10.5,0.5) {$x_{2}$};
			\node at (12,0.5) {$x_{1}$};   
		\end{scope}
	\end{tikzpicture}
	\caption{An oriented path which is union of two naturally oriented paths joined at a sink  vertex.}\label{fig.7}
\end{figure}
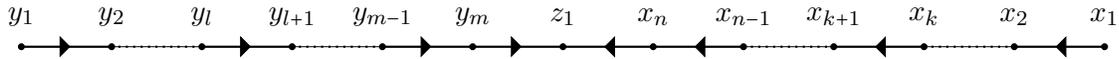

\end{notation}

The following lemma is useful for the proof of Theorem \ref{path5}.
\begin{lemma}\label{pathD1}
	Let	$ D $ be the weighted oriented path same as defined in Notation \ref{notation1}.   Assume that
	there exist two indices $ 1 < l < m $ and $ 1 < k < n $ such that the set of vertices with non-trivial
	weights are precisely $ \{y_l
	,\ldots, y_m\} \cup \{x_k,\ldots, x_n\}. $ Let	$ D_1 $  be the induced path of $D$ with	$ V(D_1) = \{x_1, x_2,  \ldots , x_n\}$ and  $E(D_1) =  \{(x_i,x_{i+1}) ~|~ 1 \leq i \leq n-1 \}$. If $C$ is a maximal strong vertex cover of $ D $,  then prove that $ C \cap V(D_1) $  is a  maximal strong vertex cover of $ D_1 $.

\end{lemma}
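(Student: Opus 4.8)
The plan is to verify the two defining properties separately: first that $C \cap V(D_1)$ is a strong vertex cover of $D_1$, and then that it is maximal. Throughout I would use the description of strongness via the SVC condition (Remark \ref{s.v.1}): a vertex cover is strong exactly when every vertex of its $L_3$ satisfies the SVC condition. Since $D_1$ is the induced digraph of $D$ on $\{x_1,\ldots,x_n\}$, the set $C \cap V(D_1)$ is automatically a vertex cover of $D_1$, and for every interior index $2 \le i \le n-1$ the neighbourhoods agree, $N_{D_1}(x_i)=N_D(x_i)=\{x_{i-1},x_{i+1}\}$, so $x_i \in L_3^{D_1}(C\cap V(D_1))$ if and only if $x_i \in L_3^{D}(C)$. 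The only vertices whose local structure changes on passing to $D_1$ are the source $x_1$ and the vertex $x_n$, which is a sink of $D_1$ although $(x_n,z_1)\in E(D)$.

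For strongness I would argue as follows. If $x_1 \in L_3^{D_1}(C\cap V(D_1))$, then also $x_1\in L_3^D(C)$; but $x_1$ is a source of $D$, so $N_D^-(x_1)=\emptyset$ and the SVC condition fails, contradicting that $C$ is strong in $D$. Hence $x_1 \notin L_3^{D_1}(C\cap V(D_1))$. For an interior vertex $x_i \in L_3^{D_1}(C\cap V(D_1))$ we have $x_i \in L_3^D(C)$, and strongness of $C$ furnishes a witness $x_{i-1}\in N_D^-(x_i)\cap V^+(D)$ with $x_{i-1}\notin L_1^D(C)$; since $x_{i-1}$ carries a non-trivial weight it is not a source, so its weight is unchanged in $D_1$, and because $N_{D_1}^+(x_{i-1})=\{x_i\}\subseteq C$ we get $x_{i-1}\notin L_1^{D_1}(C\cap V(D_1))$, which is exactly the SVC witness in $D_1$. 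Finally, if $x_n \in L_3^{D_1}(C\cap V(D_1))$ then $x_{n-1}\in C$; here $x_{n-1}$ has non-trivial weight because $k \le n-1$, and $N_{D_1}^+(x_{n-1})=\{x_n\}\subseteq C$ forces $x_{n-1}\notin L_1^{D_1}(C\cap V(D_1))$, so $x_n$ satisfies the SVC condition. Thus $C\cap V(D_1)$ is strong.

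For maximality I would argue by contradiction: assume there is a strong vertex cover $C'$ of $D_1$ with $C\cap V(D_1)\subsetneq C'$, and set $\tilde C := C' \cup \big(C\cap\{y_1,\ldots,y_m,z_1\}\big)$, so that $C\subsetneq\tilde C$, $\tilde C\cap V(D_1)=C'$, and $\tilde C$ agrees with $C$ on $\{y_1,\ldots,y_m,z_1\}$. A short check on each type of edge shows $\tilde C$ is a vertex cover of $D$. The heart of the argument is to prove that $\tilde C$ is strong, which I would do case by case on $L_3^D(\tilde C)$: on the $y$-vertices $\tilde C$ agrees with $C$, so their $L_3$-membership and SVC witnesses are inherited from $C$; whenever $z_1\in L_3^D(\tilde C)$ the in-neighbour $y_m$ has non-trivial weight and satisfies $N_D^+(y_m)=\{z_1\}\subseteq\tilde C$, giving the SVC witness; the source $x_1$ never lies in $L_3^D(\tilde C)$ by strongness of $C'$; each interior $x_i\in L_3^D(\tilde C)$ coincides with $x_i\in L_3^{D_1}(C')$ and inherits its witness $x_{i-1}$ from $C'$; and $x_n\in L_3^D(\tilde C)$ again receives the witness $x_{n-1}$ exactly as in the strongness proof. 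Hence $\tilde C$ is a strong vertex cover of $D$ strictly containing $C$, contradicting maximality of $C$, so $C\cap V(D_1)$ must be maximal.

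The main obstacle, and the part needing the most care, is verifying that the extension $\tilde C$ is strong precisely at the junction vertices $z_1$ and $x_n$, where adjoining the extra $x$-vertices of $C'$ can newly place $z_1$ (or $x_n$) into $L_3^D(\tilde C)$. The structural facts that rescue these cases are that $w(z_1)=1$ while both in-neighbours $y_m,x_n$ of $z_1$ carry non-trivial weights, and that $x_{n-1}$ has non-trivial weight because $k<n$; these are exactly where the hypotheses $1<l<m$ and $1<k<n$ enter. I would also keep careful track of the identities $\tilde C\cap V(D_1)=C'$ and $\tilde C\cap\{y_1,\ldots,y_m,z_1\}=C\cap\{y_1,\ldots,y_m,z_1\}$, since these are what let me transfer $L_1^{}$, $L_2^{}$, $L_3^{}$ information between $D$ and $D_1$ cleanly.
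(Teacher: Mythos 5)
Your proposal is correct, and it follows the same two-step skeleton as the paper's proof: first show $C\cap V(D_1)$ is strong in $D_1$ by transferring SVC witnesses down from $C$, then prove maximality by extending any strictly larger strong cover of $D_1$ to a strong cover of $D$ strictly containing $C$ (your $\tilde C = C'\cup\bigl(C\cap\{y_1,\ldots,y_m,z_1\}\bigr)$ is exactly the paper's set $C\sqcup(C_2\setminus C_1)$, just described differently). The one genuine difference is how the junction is handled. The paper begins by proving $z_1\in C$ (otherwise $C\cup\{z_1\}$ would be a strong cover strictly containing $C$), and uses this to get the inclusion $L_3^{D_1}(C\cap V(D_1))\subseteq L_3^{D}(C)$ — without $z_1\in C$, the vertex $x_n$ could lie in $L_3^{D_1}$ while lying in $L_1^{D}(C)$, and the witness-transfer argument (borrowed from Case (1) of Theorem \ref{inducedpath}) would break at $x_n$. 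You bypass this preliminary step entirely by producing explicit SVC witnesses at the junction: $x_{n-1}$ for $x_n$ and $y_m$ for $z_1$, both of which are guaranteed to have non-trivial weight and to have their unique out-neighbour inside the cover, precisely because of the hypotheses $1<k<n$ and $1<l<m$. This makes your argument more self-contained (it does not lean on Theorem \ref{inducedpath}, nor on the fact $z_1\in C$), at the cost of a slightly longer case analysis; the paper's version buys brevity by reusing earlier machinery and isolating the structural fact $z_1\in C$, which it also needs later in Theorem \ref{path5} to describe the maximal strong vertex covers of $D$ as $C_i\cup\{z_1\}\cup C_j$.
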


\begin{proof}	
Let $C$ be a maximal strong vertex cover of $D$.

Suppose $z_1 \notin C$. We claim $C^{\prime} = C \cup \{z_1\}$ is strong. Since  $z_1 \notin C,$ $N_D(z_1) \subset C$. So by Lemma \ref{L3}, $z_1 \in L^D_3(C^{\prime} )$.  Here  $y_m \in  N_D^{-}(z_1) \cap V^+(D) \cap [{L_2^D{(C^{\prime})}} \cup   {L_3^D{(C^{\prime})}} ] .$ So $z_1$ satisfies the SVC condition on $C^{\prime}$.  If $y_m \in L^D_3(C^{\prime} )$, then $y_{m-1} \in  N_D^{-}(y_m) \cap V^+(D) \cap [{L_2^D{(C^{\prime})}} \cup   {L_3^D{(C^{\prime})}} ] .$ So $y_m$ satisfies the SVC condition on $C^{\prime}$.  If $x_n \in L^D_3(C^{\prime} )$, by the similar argument, $x_n$ satisfies the SVC condition on $C^{\prime}$.  If $v \in L^D_3(C^{\prime} )$, where $v \notin \{y_m,z_1,x_n\}$, then  by Lemma \ref{L3},  
$v \in L^D_3(C)$. Since $C$ is strong, $v$ satisfies the SVC condition on $C$ and so by Lemma \ref{svc1}, $v$ satisfies the SVC condition on $C^{\prime}$. Hence $C^{\prime}  $ is strong. It's a contradiction because $C$ is maximal.  Therefore $z_1 \in C$.

Let $C_1 =  C \cap V(D_1) $.  Since $z_1 \in C$, by Lemma \ref{L3}, $v \in  L^D_3(C_1)$ implies $v \in  L^D_3(C)$.  By the similar argument as in Case (1) of Theorem \ref{inducedpath}, we can show that $C_1$ is  strong in $D_1$.

Suppose $C_1$ is not maximal in $D_1$.  That means there exists a strong vertex cover $C_2$ of $D_1$ such that $C_2= C_1 \sqcup \{ x_{i_1},\ldots,x_{i_m}  \}$, where $ \{ x_{i_1},\ldots,x_{i_m}  \} \subset V(D_1)$  for some $m > 1$.  Now we claim $C^{\prime\prime} = C \sqcup \{ x_{i_1},\ldots,x_{i_m}  \}$ is strong.  By the similar argument as in Case (1) of Theorem \ref{inducedpath}, we can show that $C^{\prime\prime}$ is  strong in $D$. It's a contradiction because $C$ is maximal.  Hence  $C_1$ is a maximal strong vertex cover of $D_1$.
\end{proof}

\begin{theorem}\label{path5}
	Let	$ D $ be the weighted oriented path same as defined in Notation \ref{notation1}.   Assume that
	there exist two indices $ 1 < l < m $ and $ 1 < k < n $ such that the set of vertices with non-trivial
	weights are precisely $ \{y_l
		,\ldots, y_m\} \cup \{x_k,\ldots, x_n\}. $  Then $ I(D)^{(s)} = I(D)^{s} $ for all $s \geq 2$.  
	
\end{theorem}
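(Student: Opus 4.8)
The plan is to reduce the statement to the single-path results through the common sink $z_1$. Throughout set $I = I(D)$, let $D_1$ be the induced path on $\{x_1,\ldots,x_n\}$ and $D_2$ the induced path on $\{y_1,\ldots,y_m\}$, so that $I(D) = I(D_1) + I(D_2) + (x_nz_1,\,y_mz_1)$. Since $I(D)^s \subseteq I(D)^{(s)}$ always holds, it suffices to prove $I(D)^{(s)} \subseteq I(D)^s$. First I would record that both $D_1$ and $D_2$ meet the hypothesis of Theorem \ref{kanoy2}: their nontrivial weights are the suffixes $\{x_k,\ldots,x_n\}$ and $\{y_l,\ldots,y_m\}$, which reach the penultimate vertices $x_{n-1}$ and $y_{m-1}$ (and, by Lemma \ref{cor3}, the weight of the sinks $x_n,y_m$ of $D_1,D_2$ is irrelevant). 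Hence $I(D_1)^{(s)} = I(D_1)^s$ and $I(D_2)^{(s)} = I(D_2)^s$ for all $s$, and these are the only one-path facts I will use.

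Next I would pin down the maximal strong vertex covers of $D$. As in the proof of Lemma \ref{pathD1}, $z_1$ lies in every maximal strong vertex cover $C$ of $D$; moreover Lemma \ref{pathD1} and its mirror image (interchanging the two branches) show that $C_1 := C \cap V(D_1)$ and $C_2 := C \cap V(D_2)$ are maximal strong covers of $D_1$ and $D_2$. Conversely, given maximal strong covers $C_1$ of $D_1$ and $C_2$ of $D_2$, the set $C_1 \sqcup C_2 \sqcup \{z_1\}$ is a maximal strong cover of $D$; maximality follows because any strong cover enlarging it would, after extending to a maximal one and restricting, contradict maximality of $C_1$ or $C_2$. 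This yields a bijection $C \leftrightarrow (C_1,C_2)$. Because the only edges leaving $V(D_1)$ or $V(D_2)$ go into the sink $z_1 \in C$, the sets $L_i^D(C)$ restrict to $L_i^{D_1}(C_1)$ and $L_i^{D_2}(C_2)$ on each branch, so Theorem \ref{minimalgenerator} gives
$$ I_{\subseteq C} = I_{\subseteq C_1} + I_{\subseteq C_2} + J_C, $$
where $J_C = (x_nz_1,\,y_mz_1)$ if $x_n \in C_1$ and $y_m \in C_2$ (the case $z_1 \in L_3^D(C)$), and $J_C = (z_1)$ otherwise (the case $z_1 \in L_2^D(C)$); here $I_{\subseteq C_1}$ and $I_{\subseteq C_2}$ are computed in $D_1$ and $D_2$.

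With this description I would compute the symbolic power through Lemma \ref{cooper}. Fix a monomial $f \in I^{(s)}$ and write $f = z_1^{c}\,g_1 g_2$ with $g_1 \in k[x_1,\ldots,x_n]$, $g_2 \in k[y_1,\ldots,y_m]$ and $z_1 \nmid g_1g_2$. Expanding $\bigl(I_{\subseteq C}\bigr)^s$ and matching the exponent of $z_1$ (every bridge generator contributes exactly one $z_1$ and nothing else does), membership $f \in (I_{\subseteq C})^s$ translates into a decomposition of $g_1$ and $g_2$ on the two branches: there are $a+b = s-c$ and $c_1+c_2 = c$ with $g_1 \in x_n^{c_1}(I_{\subseteq C_1})^a$ and $g_2 \in y_m^{c_2}(I_{\subseteq C_2})^b$. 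Running this over all covers and invoking $I(D_i)^{(a)} = I(D_i)^a$ (which lets me replace the intersection over the $C_i$ of the powers of $I_{\subseteq C_i}$ by the honest power $I(D_i)^a$, multiplication by $x_n^{c_1}$ distributing over the intersection), the goal becomes: produce a \emph{single} split $c = c_1 + c_2$, $s-c = a+b$ with $g_1 \in x_n^{c_1}I(D_1)^{a}$ and $g_2 \in y_m^{c_2}I(D_2)^{b}$, for this is exactly the condition that $f \in I(D)^s$.

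The main obstacle is precisely this last coordination step. For each branch the relevant quantity is the staircase function $c_i \mapsto c_i + \operatorname{ord}_{I(D_i)}\!\bigl(g_i/(\cdot)^{c_i}\bigr)$ (with $\cdot = x_n$ or $y_m$), which I would show is nondecreasing because removing one copy of a variable lowers the $I(D_i)$-adic order by at most one. The difficulty is that the covers do \emph{not} all request the same split $c = c_1 + c_2$: a naive ``$\max$–$\min$ equals $\min$–$\max$'' interchange is false, so I cannot simply quote that every cover separately admits a decomposition. Instead I would exploit the explicit form of the generators of $I_{\subseteq C_i}$ at the shared vertices --- namely that $x_n$ (resp.\ $y_m$) occurs only through $x_n^{w_n}$ or $x_{n-1}x_n^{w_n}$ (resp.\ the analogues for $y_m$), always to its full weight $\geq 2$ --- to locate one split that is simultaneously valid for every cover, and thereby assemble the required factorization of $f$. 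The covers with $J_C = (z_1)$ (where $x_n \notin C_1$ or $y_m \notin C_2$) are the mildest case, since there the extraction of $x_n^{c_1}$ or $y_m^{c_2}$ is not forced; the binding constraints come from the covers with $J_C = (x_nz_1,\,y_mz_1)$, and it is there that the careful, structure-specific choice of the split must be made.
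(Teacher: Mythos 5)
Your reduction framework agrees with the paper's own: the facts that $z_1$ lies in every maximal strong vertex cover $C$ of $D$, that $C\cap V(D_1)$ and $C\cap V(D_2)$ are maximal strong covers of the two branches, and that conversely every pair $(C_1,C_2)$ assembles to a maximal strong cover $C_1\sqcup\{z_1\}\sqcup C_2$ are exactly Lemma \ref{pathD1} and the cover classification in the paper's proof, and your formula $I_{\subseteq C}=I_{\subseteq C_1}+I_{\subseteq C_2}+(x_nz_1,y_mz_1)$ is the paper's application of Theorem \ref{minimalgenerator} (your alternative case $J_C=(z_1)$ never occurs, since under the weight hypothesis every maximal strong cover of $D_1$ contains $x_n$ and every one of $D_2$ contains $y_m$). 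The genuine gap is the step you yourself call the ``main obstacle.'' After translating $f=z_1^cg_1g_2\in(I_{\subseteq C})^s$ into cover-dependent data $(a,b,c_1,c_2)$ (note also the bookkeeping point that one only gets $c_1+c_2\leq c$, not $=c$, since the bridge generators need not account for every factor of $z_1$), you must exhibit a \emph{single} split valid for all covers simultaneously before you may invoke $\bigcap_{C_1}(I_{\subseteq C_1})^a=I(D_1)^{(a)}=I(D_1)^a$. This is a max--min interchange of quantifiers, and your proposal does not prove it: the monotonicity of $c_1\mapsto c_1+\operatorname{ord}_{I(D_1)}(g_1/x_n^{c_1})$ is true but insufficient (two covers can have their order jumps at different values of $c_1$, so monotone staircases alone do not permit the interchange, as you concede), and the closing sentence about ``locating one split that is simultaneously valid for every cover'' states the goal rather than proving it. Worse, that coordination statement is essentially equivalent to the containment $I^{(s)}\subseteq I^s$ being proved, so what remains is not a technical detail but the entire content of the theorem; using Theorem \ref{kanoy2} as a black box has not actually discharged any of it.

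For contrast, the paper sidesteps the interchange entirely by never treating the single-path theorem as a black box. It imports from \cite[Theorem 3.6]{kanoy} the \emph{explicit} classification of the maximal strong covers of each branch (three families per branch, hence nine covers $C_{i,j}$ of $D$), writes out each $I_{\subseteq C_{i,j}}$ concretely via Theorem \ref{minimalgenerator} as $(C^{\prime}_i)+J^{\prime}_p+(y_mz_1,x_nz_1)+J_q+(C^{\prime}_j)$ with explicit monomial generator lists, and then runs the same direct intersection argument as in the single-path proof on these nine ideals at once (in the style of the proof of Theorem \ref{tree}: take a minimal generator of the intersection of the $s$-th powers, do a case analysis on which explicit generators divide it, factor one out, and induct on $s$). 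If you wish to complete your route instead, you must actually prove the coordination lemma, e.g.\ by analyzing how $\operatorname{ord}_{I_{\subseteq C_1}}(g_1/x_n^{c_1})$ varies with $C_1$ and $c_1$ using those explicit generator lists --- and that analysis is, in substance, the computation the paper performs.
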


\begin{proof}	
	 Let	$ D_1 $ and $D_2$ be the induced paths of $D$ with	$ V(D_1) = \{x_1, x_2,  \ldots , x_n\}$,  $E(D_1) =  \{(x_i,x_{i+1}) ~|~ 1 \leq i \leq n-1 \}$, 	$ V(D_2) = \{y_1, y_2,\ldots ,  y_m\}$ and  $E(D_2) =  \{(y_i,y_{i+1}) ~|~ 1 \leq i \leq m-1 \} $.

By \cite[Theorem 3.6]{kanoy}, let  $\{C_{\alpha}\}, \{C_{\beta}\}$ and $\{C_{\gamma}\}  $ be the collections of all maximal strong vertex covers of $ D_1 $, where

$\displaystyle{ C_{\alpha} =  } \bigg\{C^{\prime}_{\alpha} |    C^{\prime}_{\alpha}~\mbox{is a minimal vertex cover of the induced path}~D(x_1, \ldots , x_{k-2})\bigg\} \\ 
\hspace*{0.75cm}\bigcup \{x_{k-1}, x_{k+1}, x_{k+2}, x_{k+3}, \ldots , x_n\}$,
\vspace*{0.2cm}\\
$\displaystyle{ C_{\beta} =  } \bigg\{C^{\prime}_{\beta} |    C^{\prime}_{\beta}~\mbox{is a minimal vertex cover of the induced path}~D(x_1, \ldots , x_{k-3})\bigg\} \\ 
\hspace*{0.75cm}\bigcup \{x_{k-2},x_k, x_{k+1}, x_{k+2},  \ldots , x_n\}$,
\vspace*{0.2cm}\\  	
and $\displaystyle{ C_{\gamma} =  } \bigg\{C^{\prime}_{\gamma} |    C^{\prime}_{\gamma}~\mbox{is a minimal vertex cover of the induced path}~D(x_1, \ldots , x_{k-4})\bigg\} \\ 
\hspace*{0.75cm}\bigcup \{x_{k-3},x_{k-1}, x_{k}, x_{k+2}, x_{k+3}, \ldots , x_n\}$.
\vspace*{0.2cm}\\
By \cite[Theorem 3.6]{kanoy}, let  $\{C_{\delta}\}, \{C_{\zeta}\}$ and $\{C_{\nu}\}  $ be the collections of all maximal strong vertex covers of $ D_2 $, where

$\displaystyle{ C_{\delta} =  } \bigg\{C^{\prime}_{\delta} |    C^{\prime}_{\delta}~\mbox{is a minimal vertex cover of the induced path}~D(y_1, \ldots , y_{l-2})\bigg\} \\ 
\hspace*{0.75cm}\bigcup \{y_{l-1}, y_{l+1}, y_{l+2}, y_{l+3}, \ldots , y_m\}$,
\vspace*{0.2cm}\\
$\displaystyle{ C_{\zeta} =  } \bigg\{C^{\prime}_{\zeta} |    C^{\prime}_{\zeta}~\mbox{is a minimal vertex cover of the induced path}~D(y_1, \ldots , y_{l-3})\bigg\} \\ 
\hspace*{0.75cm}\bigcup \{y_{l-2},y_l, y_{l+1}, y_{l+2},  \ldots , y_m\}$,  	
\vspace*{0.2cm}\\
and $\displaystyle{ C_{\nu} =  } \bigg\{C^{\prime}_{\nu} |    C^{\prime}_{\nu}~\mbox{is a minimal vertex cover of the induced path}~D(y_1, \ldots , y_{l-4})\bigg\} \\ 
\hspace*{0.75cm}\bigcup \{y_{l-3},y_{l-1}, y_{l}, y_{l+2}, y_{l+3}, \ldots , y_m\}$.
\vspace*{0.2cm}\\
Let $C$ be a maximal strong vertex cover of $D$. Then by Lemma \ref{pathD1}, $ C \cap V(D_1) $  is a  maximal strong vertex cover of $ D_1 $ and similarly, $ C \cap V(D_2) $  is a  maximal strong vertex cover of $ D_2 $.   From the proof of Lemma \ref{pathD1}, we know $z_1 \in C$.  Therefore we can say that  $\{C_{i,j}\}$  be the collection of all the  maximal strong vertex covers of $ D $, where  $i \in \{\delta,\zeta, \nu  \}$,  $j \in \{\alpha,\beta, \gamma  \}$ and each  $C_{i,j} = C_{i} \cup \{z_1\}  \cup  C_{j}$.  By Theorem \ref{cooper}, we get  $$ \displaystyle{I^{(s)}  = \bigcap_{\substack{\ i \in \{\delta, \zeta, \nu\}\\~\mbox{and}~ j \in \{\alpha, \beta, \gamma\}}} (I_{\subseteq C_{i,j}})^s }.$$

By Theorem \ref{minimalgenerator}, we know that

$ I_{\subseteq C_{\delta,j}} = I_{\subseteq C_{\delta}} + (y_mz_1,x_nz_1) + I_{\subseteq C_{j}}  =  (C^{\prime}_{\delta}  ) + J_1^{\prime} + (y_mz_1,x_nz_1) + J_k + (C^{\prime}_{j}  )$ for $j= \alpha, \beta, \gamma$ when $k=1,2,3$, respectively,

$ I_{\subseteq C_{\zeta,j}} = I_{\subseteq C_{\zeta}} + (y_mz_1,x_nz_1) + I_{\subseteq C_{j}}  =  (C^{\prime}_{\zeta}  ) + J_2^{\prime} + (y_mz_1,x_nz_1) + J_k + (C^{\prime}_{j}  )$ for $j= \alpha, \beta, \gamma$ when $k=1,2,3$, respectively,

$ I_{\subseteq C_{\nu,j}} = I_{\subseteq C_{\nu}} + (y_mz_1,x_nz_1) + I_{\subseteq C_{j}}  =  (C^{\prime}_{\nu}  ) + J_3^{\prime} + (y_mz_1,x_nz_1) + J_k + (C^{\prime}_{j}  )$ for $j= \alpha, \beta, \gamma$ when $k=1,2,3$, respectively,

where $J_1^{\prime} = (y_{l-1}, y_{l+1}^{w_{l+1}},y_{l+1}y_{l+2}^{w_{l+2}},\ldots,y_{m-1}y_m^{w_m})$, $J_1 = (x_{k-1}, x_{k+1}^{w_{k+1}},x_{k+1}x_{k+2}^{w_{k+2}},\ldots,x_{n-1}x_n^{w_n})$,

$J_2^{\prime} = (y_{l-2},y_{l}^{w_{l}}, y_{l}y_{l+1}^{w_{l+1}},y_{l+1}y_{l+2}^{w_{l+2}},\ldots,y_{m-1}y_m^{w_m})$, $J_2 = (x_{k-2},x_{k}^{w_{k}}, x_{k}x_{k+1}^{w_{k+1}},x_{k+1}x_{k+2}^{w_{k+2}},\ldots,\\x_{n-1}x_n^{w_n})$,
$J_3^{\prime} = (y_{l-3},y_{l-1}, y_{l},y_{l+2}^{w_{l+2}},y_{l+2}y_{l+3}^{w_{l+3}},\ldots,y_{m-1}y_m^{w_m})$, $J_3 = (x_{k-3},x_{k-1}, x_{k},x_{k+2}^{w_{k+2}},\\x_{k+2}x_{k+3}^{w_{k+3}},\ldots,x_{n-1}x_n^{w_n})$.

%
%

The  rest of the proof follows with
similar argument as in  \cite[Theorem 3.6]{kanoy}. 
\end{proof}

\begin{lemma}\label{path5r}
	Let	$ D $ be the weighted oriented path  same as  defined in Notation \ref{notation1}.  Assume that    $w(y_m) \geq 2$ and $w(x_n) \geq 2$.  
	If  $w(y_l) \geq 2$ for some $1 < l \leq m-2$ such that  $w(y_{l+1}) =1$ or    $w(x_k) \geq 2$ for some $1 < k \leq n-2$ such that  $w(x_{k+1}) = 1$, then $ I(D)^{(s)} \neq I(D)^{s} $ for some $s \geq 2$.  
	
\end{lemma}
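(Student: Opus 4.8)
The plan is to reduce to the known inequality for a short naturally oriented sub-path via Lemma \ref{atmost}, and then to transport that inequality back to $D$ using Theorem \ref{inducedpath}. Recall that $D$ is the union of the two naturally oriented arms $y_1 \to \cdots \to y_m \to z_1$ and $x_1 \to \cdots \to x_n \to z_1$ meeting at the common sink $z_1$. Since these two arms play symmetric roles, it suffices to treat the case where $w(x_k) \geq 2$ for some $1 < k \leq n-2$ with $w(x_{k+1}) = 1$; the case of a bad pattern on the $y$-arm follows verbatim after interchanging the two arms.

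First I would isolate the offending pattern in an induced sub-path to which Lemma \ref{atmost} applies. Let $u$ be the out-neighbour of $x_{k+2}$ along the orientation, so that $u = x_{k+3}$ if $k+2 < n$ and $u = z_1$ if $k+2 = n$; this is well defined because $x_{k+2}$ is not the sink. Let $D'$ be the induced sub-path of $D$ on $\{x_1, x_2, \ldots, x_{k+2}, u\}$. Then $D'$ is a naturally oriented path, so at most one edge is oriented into each of its vertices, and it contains the induced length-$3$ naturally oriented path on $\{x_{k-1}, x_k, x_{k+1}, x_{k+2}\}$ (the vertex $x_{k-1}$ exists since $k > 1$) with $w(x_k) \geq 2$ and $w(x_{k+1}) = 1$. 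Hence Lemma \ref{atmost} applies to $D'$ and gives $I(D')^{(3)} \neq I(D')^3$.

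Next I would lift this inequality to $D$ by verifying the hypotheses of Theorem \ref{inducedpath} for the induced sub-path $D'$. The reason for extending $D'$ one step past $x_{k+2}$, rather than stopping at the minimal four-vertex path, is exactly to meet the junction hypothesis: in $D'$ the penultimate vertex is $x_{k+2}$, whose unique in-neighbour in $D$ is $x_{k+1}$ with $w(x_{k+1}) = 1$, so that $N_D^-(x_{k+2}) \cap V^+(D) = \phi$. Moreover the only source of $D'$ is $x_1$, which is a source of $D$ as well, so no spurious source is introduced. Therefore $D'$ is an induced sub-path of $D$ of the form required by Theorem \ref{inducedpath}, and $I(D')^{(3)} \neq I(D')^3$ forces $I(D)^{(3)} \neq I(D)^3$, proving the claim with $s = 3$.

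I expect the junction hypothesis $N_D^-(\cdot) \cap V^+(D) = \phi$ of Theorem \ref{inducedpath} to be the main point requiring care, since it is what dictates precisely which sub-path to cut out: cutting at $x_{k+2}$ would leave the heavy vertex $x_k$ as the in-neighbour of the penultimate vertex $x_{k+1}$ and violate the hypothesis, whereas extending by one vertex places the light vertex $x_{k+1}$ in that role. The only remaining book-keeping is the boundary overlap with the sink in the case $k = n-2$, where $u = z_1$: here $z_1$ is a sink of $D'$ (not a source) of weight $1$, so attaching the $y$-arm to $D'$ at $z_1$ is still an instance of Theorem \ref{inducedpath}, and the argument goes through unchanged.
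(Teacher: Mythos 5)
Your proposal is correct and follows essentially the same approach as the paper's proof: both isolate an induced sub-path that extends exactly one vertex beyond the bad weight pattern (reaching $z_1$ in the boundary case) precisely so that the junction hypothesis $N_D^-(\cdot)\cap V^+(D)=\phi$ of Theorem \ref{inducedpath} holds, and then lift the inequality to $D$. The only cosmetic differences are that you work on the $x$-arm rather than the $y$-arm, unify the paper's two cases ($k\leq n-3$ versus $k=n-2$) through the vertex $u$, and certify the sub-path inequality with Lemma \ref{atmost} (giving the explicit exponent $s=3$) where the paper invokes Theorem \ref{kanoy2}.
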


\begin{proof}
Suppose  $w(y_l) \geq 2$ for some $1 < l \leq m-2$ such that  $w(y_{l+1}) =1$. 

\textbf{Case (1)}  Assume that  $1 < l \leq m-3$.

  Let $ D^{\prime} $ be a weighted naturally oriented path with
	$ V(D^{\prime}) = \{y_1, y_2,  \ldots , y_l,y_{l+1},y_{l+2},y_{l+3}\}$  and $E(D^{\prime}) = \{(y_i,y_{i+1}) ~|~ 1 \leq i \leq l+2 \}$.	
	Then by Theorem \ref{kanoy2},   $ {I(D^{\prime})}^{(s)} \neq {I(D^{\prime})}^s $ for some $s \geq 2$ and so  
	by Theorem \ref{inducedpath}, we have  $ I(D)^{(s)} \neq I(D)^{s} $.
	
\textbf{Case (2)}  Assume that  $ l = m-2$.

Here $w(y_{m-1}) = 1$. Let $ D^{\prime} $ be a weighted naturally oriented path with
$ V(D^{\prime}) = \{y_1, y_2,  \ldots ,\\ y_{m-2},y_{m-1},y_{m},z_1\}$  and $E(D^{\prime}) = \{(y_i,y_{i+1}) ~|~ 1 \leq i \leq m-1 \} \cup \{(y_m,z_1)\}$.	
Then by Theorem \ref{kanoy2},   $ {I(D^{\prime})}^{(s)} \neq {I(D^{\prime})}^s $ for some $s \geq 2$ and thus  
by Theorem \ref{inducedpath}, we get  $ I(D)^{(s)} \neq I(D)^{s} $.

Similarly	if $w(x_k) \geq 2$ for some $1 < k \leq n-2$ such that  $w(x_{k+1}) = 1$,  we can show that $ I(D)^{(s)} \neq I(D)^{s} $ for some $s \geq 2$. 	
\end{proof}

\begin{theorem}\label{path6}
	Let	$ D $ be the weighted oriented path  same as defined in Notation \ref{notation1}. Assume that   $w(y_m) \geq 2$ and $w(x_n) \geq 2$.
 Then $ I(D)^{(s)} = I(D)^{s} $ for all $s \geq 2$  if and only if  $D$
	satisfies the condition  ``there exist two indices $ 1 < l < m $ and $ 1 < k < n $ such that the set of vertices with non-trivial
	weights are precisely $ \{y_l
	,\ldots, y_m\} \cup \{x_k,\ldots, x_n\}  $".  
	
\end{theorem}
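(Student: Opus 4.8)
The statement is an equivalence whose sufficiency direction is already in hand: if $D$ satisfies the block condition, then Theorem \ref{path5} gives $I(D)^{(s)} = I(D)^s$ for all $s \geq 2$ with no further work. So the plan is to concentrate entirely on the necessity direction, and I would prove it in contrapositive form: assuming that the vertices of non-trivial weight are \emph{not} of the prescribed suffix shape $\{y_l,\ldots,y_m\}\cup\{x_k,\ldots,x_n\}$, I would produce some $s\geq 2$ with $I(D)^{(s)}\neq I(D)^s$.

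For the contrapositive the key structural facts are that the extreme vertices $y_1$ and $x_1$ are sources, so $w(y_1)=w(x_1)=1$, that $w(z_1)=1$, and that by hypothesis $w(y_m)\geq 2$ and $w(x_n)\geq 2$. The block condition on the $y$-branch asks exactly that the set $\{\,i : w(y_i)\geq 2\,\}$ be an interval ending at $m$ (and symmetrically on the $x$-branch). I would argue that failure of this on, say, the $y$-branch forces an \emph{interior descent}, i.e.\ an index $l$ with $w(y_l)\geq 2$ and $w(y_{l+1})=1$: if the heavy set is not upward closed, some heavy vertex is immediately followed by a light one. Since $w(y_1)=1$ the heavy vertex has index $l\geq 2$, and since $w(y_m)\geq 2$ the following light vertex cannot be $y_m$, so $l+1\leq m-1$, that is $2\leq l\leq m-2$. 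This is precisely the hypothesis of Lemma \ref{path5r}, which then delivers $I(D)^{(s)}\neq I(D)^s$; the $x$-branch is identical by the evident symmetry of Notation \ref{notation1}. (Concretely, Lemma \ref{path5r} itself reaches this via an induced naturally oriented subpath and Theorem \ref{kanoy2}/Theorem \ref{inducedpath}, so no new induced-path bookkeeping is needed here.)

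The delicate point, and the step I expect to be the real obstacle, is the boundary behaviour at the sink. The descent $w(y_m)\geq 2$, $w(z_1)=1$ \emph{into} $z_1$ is not an interior descent and lies on the boundary $l=m$, just outside the range $2\leq l\leq m-2$ where Lemma \ref{path5r} operates; it corresponds to the heavy set being the single-vertex suffix $\{y_m\}$ (the case $w(y_{m-1})=1$). I would need to verify carefully that this configuration does \emph{not} belong on the inequality side of the argument — that a single-vertex heavy block abutting the sink is compatible with $I(D)^{(s)}=I(D)^s$ — so that the bijection between ``failures of the block condition'' and ``interior descents in the Lemma \ref{path5r} range'' is exact, flagging no sink-adjacent descent and missing no genuine interior failure. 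Establishing this matching is the crux; once it is in place, combining Theorem \ref{path5} for sufficiency with Lemma \ref{path5r} for necessity closes the equivalence.
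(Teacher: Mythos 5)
Your skeleton coincides with the paper's own proof, which consists of exactly the two citations you use: Theorem \ref{path5} for the ``if'' direction and Lemma \ref{path5r} for the ``only if'' direction. Your reduction of a non-interval heavy set to an interior descent with $2\leq l\leq m-2$ (using that $y_1$ is a source and $y_m$ is heavy) is correct, and you have isolated precisely the case this combination misses: a branch whose heavy set is the singleton $\{y_m\}$, i.e.\ $w(y_{m-1})=1$. That configuration violates the stated condition (the requirement $1<l<m$ forces $y_{m-1}$ to be heavy), yet it contains no descent in the range $2\leq l\leq m-2$ where Lemma \ref{path5r} applies.

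The genuine gap lies in what you propose to prove about that case, which is backwards relative to the statement. Since the singleton configuration \emph{fails} the stated block condition, the ``only if'' direction obliges you to exhibit some $s\geq 2$ with $I(D)^{(s)}\neq I(D)^s$ for it; you instead plan to verify that it is ``compatible with $I(D)^{(s)}=I(D)^s$'', which, if true, refutes the stated equivalence rather than completing it. And it does appear to be true: take $m=n=2$ with $w(y_2)=w(x_2)=2$, so the heavy set is exactly $\{y_2,x_2\}$ and no admissible $l$ exists. The strong vertex covers are $\{y_2,x_2\}$, $\{y_2,z_1,x_2\}$, $\{y_2,z_1,x_1\}$, $\{y_1,z_1,x_1\}$, $\{y_1,z_1,x_2\}$, the last four being maximal, so Lemma \ref{cooper} and Theorem \ref{minimalgenerator} give $I(D)^{(s)}=(y_2^{2},x_2^{2},y_2z_1,x_2z_1)^s\cap(x_1,y_2^{2},z_1)^s\cap(y_1,x_1,z_1)^s\cap(y_1,x_2^{2},z_1)^s$, and an elementary feasibility check on exponent vectors shows this intersection equals $I(D)^s$ for every $s$. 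So in this case equality holds although the stated condition fails; the necessity direction cannot be proved for it by any means, the paper's own one-line proof has the same hole you found, and the condition in the statement presumably ought to allow $l=m$ and $k=n$ (at which point proving sufficiency would require extending Theorem \ref{path5} to singleton blocks, not just the matching of failures to descents that you describe). In short: you correctly located the problematic case, but your plan for it is incompatible with the theorem as stated, and what the theorem as stated demands of that case appears to be false.
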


\begin{proof}
It follows from Theorem \ref{path5}  and  Lemma \ref{path5r}.	
\end{proof}

\begin{remark}
Let	$ D $ be the weighted oriented path  same as defined in Notation \ref{notation1}. If  $w(x_i) = 1$ for  $2 \leq i \leq n$, then by changing the orientations of  edges of the edge set  $ \{(x_i,x_{i+1}) ~|~ 1 \leq i \leq n-1 \}\cup \{(x_n,z_1)\}$, we can think $D$ as a weighted naturally oriented path. Similarly if  $w(y_i) = 1$ for  $2 \leq i \leq m$,  we can think $D$ as a weighted naturally oriented path.

\end{remark}

\begin{notation}\label{notation2}
	Let	$ D $ be a weighted oriented  path with
	$ V(D) = \{y_1, y_2, \ldots , y_m, z_1, z_2, x_n, \\  \ldots ,  x_2, x_1\}$, $E(D) = \{(y_i,y_{i+1}) ~|~ 1 \leq i \leq m-1 \} \cup \{(y_m,z_{1}), (z_1,z_2), (x_n,z_{2})\} \cup   \{( x_i,  x_{i+1}) ~|~\\1 \leq i \leq n-1 \}$, $ w(z_1)=1$ and $ w(z_2)=1$ (see Figure \ref{fig.8}).

\begin{figure}[!ht]
	\begin{tikzpicture}[scale=0.75]
		\begin{scope}[ thick, every node/.style={sloped,allow upside down}] 
			\definecolor{ultramarine}{rgb}{0.07, 0.04, 0.56} 
			\definecolor{zaffre}{rgb}{0.0, 0.08, 0.66}
			\draw[fill, black] (-6,0) --node {\midarrow}(-4.5,0);
			\draw[fill, black][dotted,thick] (-3,0)   -- (-4.5,0);
			\draw[fill, black]  (-3,0)  --node {\midarrow}(-1.5,0);
			\draw[fill, black][dotted,thick] (0,0) --(-1.5,0);   
			\draw[fill, black]  (0,0)--node {\midarrow}(1.5,0);
			\draw[fill, black] (1.5,0) --node {\midarrow}(3,0);
			\draw[fill, black] (3,0) --node {\midarrow}(4.5,0);
			\draw[fill, black] (6,0) --node {\midarrow}(4.5,0);
			\draw[fill, black] (7.5,0) --node {\midarrow} (6,0);
			\draw [fill, black][dotted,thick](9,0) --(7.5,0);
			\draw[fill, black]  (10.5,0) --node {\midarrow}(9,0);
			\draw[fill, black] [dotted,thick] (12,0) --(10.5,0);
			\draw[fill, black]  (13.5,0) --node {\midarrow}(12,0);

			\draw[fill] [fill] (-1.5,0) circle [radius=0.04];
			\draw[fill] [fill] (-3,0) circle [radius=0.04];
			\draw[fill] [fill] (-4.5,0) circle [radius=0.04];
			\draw[fill] [fill] (-6,0) circle [radius=0.04];  
			\draw [fill] [fill] (0,0) circle [radius=0.04];
			\draw[fill] [fill] (1.5,0) circle [radius=0.04];
			\draw[fill] [fill] (3,0) circle [radius=0.04];
			\draw[fill] [fill] (4.5,0) circle [radius=0.04];
			\draw[fill] [fill] (6,0) circle [radius=0.04];
			\draw[fill] [fill] (7.5,0) circle [radius=0.04];
			\draw[fill] [fill] (9,0) circle [radius=0.04];
			\draw[fill] [fill] (10.5,0) circle [radius=0.04];
			\draw[fill] [fill] (12,0) circle [radius=0.04];
			\draw[fill] [fill] (13.5,0) circle [radius=0.04];

			\node at (-6,0.5) {$y_1$};
			\node at (-4.5,0.5) {$y_{2}$};
			\node at (-3,0.5) {$y_{l}$};
			\node at (-1.5,0.5) {$y_{l+1}$};
			\node at (0,0.5) {$y_{m-1}$};
			\node at (1.5,0.5) {$y_m$};
			\node at (3,0.5) {$z_1$};
			\node at (4.5,0.5) {$z_{2}$};
			\node at (6,0.5) {$x_{n}$};
			\node at (7.5,0.5) {$x_{n-1}$};
			\node at (9,0.5) {$x_{k+1}$};  
			\node at (10.5,0.5) {$x_{k}$};
			\node at (12,0.5) {$x_{2}$};
			\node at (13.5,0.5) {$x_{1}$};   
		\end{scope}
	\end{tikzpicture}
	\caption{An oriented path which is union of two naturally oriented paths joined at a sink vertex.}\label{fig.8}
\end{figure}
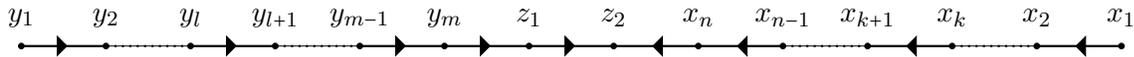

\end{notation}

\begin{theorem}\label{path7.}
Let	$ D $ be the weighted oriented path  same as defined in Notation \ref{notation2}. Assume that
there exist two indices $ 1 < l \leq m $ and $ 1 < k \leq n $ such that the set of vertices with non-trivial
weights are precisely $ \{y_l
,\ldots, y_m\} \cup \{x_k,\ldots, x_n\}. $
 Then $ I(D)^{(s)} = I(D)^{s} $ for all $s \geq 2$.  
	
\end{theorem}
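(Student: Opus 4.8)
The plan is to mirror the proof of Theorem \ref{path5}, accounting for the extra trivially weighted vertex $z_1$. First observe that $D$ is the union of two naturally oriented paths sharing the common sink $z_2$, namely $y_1 \to \cdots \to y_m \to z_1 \to z_2$ and $x_1 \to \cdots \to x_n \to z_2$. Since $l \le m$ and $k \le n$, we have $y_m, x_n \in V^+(D)$, i.e.\ $w(y_m) \ge 2$ and $w(x_n) \ge 2$; each arm therefore satisfies the criterion of Theorem \ref{kanoy2}, which is the structural reason the equality should hold. The actual work is to package this for the whole graph through its maximal strong vertex covers and the formula of Theorem \ref{minimalgenerator}.

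First I would pin down the maximal strong vertex covers of $D$, the key claim being that $z_1, z_2 \in C$ for every maximal strong vertex cover $C$. If $z_2 \notin C$, then $z_1, x_n \in C$ (to cover the two edges into $z_2$); setting $C' = C \cup \{z_2\}$ one has $z_2 \in L_3^D(C')$ by Lemma \ref{L3}, and $x_n \in N_D^-(z_2) \cap V^+(D) \cap (L_2^D(C') \cup L_3^D(C'))$, because $N_D^+(x_n) = \{z_2\} \subset C'$ forces $x_n \notin L_1^D(C')$. Hence $z_2$ satisfies the SVC condition, and by Lemma \ref{svc1} every other vertex of $L_3^D(C')$ still does, so $C'$ is strong, contradicting maximality. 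With $z_2 \in C$ established, the same argument applied to the edge $y_m \to z_1$ (using $y_m \in V^+(D)$ and $N_D^+(y_m) = \{z_1\}$) forces $z_1 \in C$. With $z_1, z_2$ fixed in $C$, an argument parallel to Lemma \ref{pathD1} shows that $C \cap V(D_1)$ and $C \cap V(D_2)$ are maximal strong vertex covers of the naturally oriented sub-paths $D_1$ on $\{x_1, \ldots, x_n\}$ and $D_2$ on $\{y_1, \ldots, y_m\}$, and conversely that every maximal strong vertex cover of $D$ has the form $C_{i,j} = C_i \cup \{z_1, z_2\} \cup C_j$, with $C_i, C_j$ the maximal strong vertex covers of $D_2, D_1$ classified in \cite[Theorem 3.6]{kanoy}.

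Next I would compute $I_{\subseteq C_{i,j}}$ by Theorem \ref{minimalgenerator}. Since $z_1, z_2 \in L_3^D(C_{i,j})$, since $y_m, x_n \in L_2^D(C_{i,j}) \cup L_3^D(C_{i,j})$, and since $w(z_1) = w(z_2) = 1$, the only minimal generators meeting the sink region are $y_m z_1$, $z_1 z_2$ and $x_n z_2$. Thus $I_{\subseteq C_{i,j}} = I_{\subseteq C_i} + (y_m z_1, z_1 z_2, x_n z_2) + I_{\subseteq C_j}$, which is exactly the decomposition of Theorem \ref{path5} except that the single meeting vertex is replaced by the three-edge chain $y_m \to z_1 \to z_2 \leftarrow x_n$. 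By Lemma \ref{cooper} this gives $I(D)^{(s)} = \bigcap_{i,j} (I_{\subseteq C_{i,j}})^s$, and I would finish by the monomial-membership analysis of \cite[Theorem 3.6]{kanoy}: given a monomial lying in every $(I_{\subseteq C_{i,j}})^s$, read off its exponents on the $y$-arm, on the $x$-arm, and on $z_1, z_2$, and exhibit it as a product of $s$ edge-generators of $I(D)$.

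The main obstacle will be this last combinatorial step. The intermediate vertex $z_1$ makes the sink region contribute the three generators $y_m z_1, z_1 z_2, x_n z_2$ rather than the two $y_m z_1, x_n z_1$ of Theorem \ref{path5}, so the bookkeeping of how the $z_1$- and $z_2$-degrees of a monomial in the intersection are distributed among the chosen generators is more delicate: one must route the factorization through $z_1 z_2$ without stranding degree at the sink. This is precisely where the hypothesis that $y_m$ and $x_n$ carry nontrivial weights — equivalently, that both arms satisfy the condition of Theorem \ref{kanoy2} — is used to guarantee that a monomial lying in all the $(I_{\subseteq C_{i,j}})^s$ always admits a factorization into $s$ generators of $I(D)$.
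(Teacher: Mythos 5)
Your route is exactly the one the paper intends (its proof of Theorem \ref{path7.} is literally ``the similar argument as in Theorem \ref{path5}''), and your argument does work verbatim when $l<m$ and $k<n$: there it reproduces Lemma \ref{pathD1} with $z_1$ replaced by the pair $z_1,z_2$. But Theorem \ref{path7.} allows the boundary cases $l=m$ and $k=n$, and precisely there your key structural claim --- that every maximal strong vertex cover of $D$ contains both $z_1$ and $z_2$, hence has the form $C_i\cup\{z_1,z_2\}\cup C_j$ --- is false. The flaw is in your strongness check for $C'=C\cup\{z_2\}$: Lemma \ref{svc1} only controls vertices that were already in $L_3^D(C)$, but adding $z_2$ can push \emph{new} vertices into $L_3^D(C')$, namely $z_1$ and $x_n$. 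For $x_n$ the SVC condition needs $N_D^-(x_n)\cap V^+(D)=\{x_{n-1}\}\cap V^+(D)\neq\emptyset$, i.e.\ $k\le n-1$; when $k=n$ this can never hold, so a strong cover containing $x_{n-1},x_n,z_1$ but not $z_2$ cannot be enlarged by $z_2$, and in fact cannot be enlarged at all.

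Concretely, take $m=2$, $n=3$, $w(y_2)=w(x_3)=2$ and all other weights $1$ (so $l=m$, $k=n$, which the hypothesis permits). Then $C=\{y_2,z_1,x_2,x_3\}$ is a minimal, hence strong, vertex cover, and it is maximal: adding $y_1$ or $x_1$ creates an $L_3$-vertex with empty in-neighbourhood, while adding $z_2$ puts $x_3$ into $L_3$ with $N_D^-(x_3)=\{x_2\}$ and $w(x_2)=1$. So $C$ is a maximal strong vertex cover not containing $z_2$ (likewise $\{y_1,z_1,x_2,x_3\}$), your classification misses it, and $C\cap V(D_1)=\{x_2,x_3\}$ is not even strong in $D_1$, so the analogue of Lemma \ref{pathD1} fails as well. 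Moreover this cannot be repaired by weakening your use of Lemma \ref{cooper} to the single inclusion $\bigcap_{i,j}(I_{\subseteq C_{i,j}})^s\subseteq I^s$: in this example the monomial $f=x_1y_2^2z_1z_2$ lies in $(I_{\subseteq C_{i,j}})^2$ for all four covers $C_{i,j}$ built from $C_i\in\{\{y_1\},\{y_2\}\}$ and $C_j\in\{\{x_2\},\{x_1,x_3\}\}$ (it is divisible by the products $z_1\cdot z_2$, $x_1\cdot z_1$, $y_2^2\cdot z_2$, $x_1\cdot y_2^2$ of pairs of generators, respectively), yet $f\notin I(D)^2$; it is kept out of $I(D)^{(2)}$ only by the missing cover $\{y_1,z_1,x_2,x_3\}$, whose associated ideal is $(y_1,z_1,x_2,x_3)$ and $f$ has total degree one in those variables. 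So when $l=m$ or $k=n$ the set of maximal strong vertex covers is genuinely larger than your list, your formula for $I^{(s)}$ overshoots, and the overshoot is not contained in $I^s$; a correct proof must identify and handle these extra covers (a point the paper's one-line proof also glosses over, but which your explicit write-up turns into false assertions).
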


\begin{proof}

This proof follows by the similar argument as in Theorem \ref{path5}.	
\end{proof}

\begin{lemma}\label{path7}
	Let	$ D $ be the weighted oriented path  same as defined in Notation \ref{notation2}.  
	If  $w(y_l) \geq 2$ for some $1 < l \leq m-1$ such that  $w(y_{l+1}) =1$  or      $w(x_k) \geq 2$ for some $1 < k \leq n-1$ such that  $w(x_{k+1}) = 1$, then $ I(D)^{(s)} \neq I(D)^{s} $ for some $s \geq 2$.  
	
\end{lemma}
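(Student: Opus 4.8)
The plan is to follow the strategy of Lemma \ref{path5r}: locate an induced \emph{naturally oriented} subpath $D'$ of $D$ carrying a heavy-then-light pair, deduce $I(D')^{(s)}\neq I(D')^{s}$ from Theorem \ref{kanoy2}, and transfer the inequality to $D$ by Theorem \ref{inducedpath}. The two naturally oriented paths available inside $D$ are the $y$-branch $y_1\to\cdots\to y_m\to z_1\to z_2$ and the $x$-branch $x_1\to\cdots\to x_n\to z_2$; the hypothesis hands us a vertex of weight $\geq 2$ immediately followed by a vertex of weight $1$, and the whole argument turns on placing this pair inside such a subpath with one vertex to spare on each side.

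Assume first $w(y_l)\geq 2$, $w(y_{l+1})=1$ with $1<l\leq m-1$. I would let $D'$ be the initial segment of the $y$-branch on its first $l+3$ vertices. Since $l\leq m-1$ this segment stops at position $l+3\leq m+2$, that is at $y_{l+3}$, at $z_1$, or at $z_2$; in each case $D'$ is a naturally oriented path containing the consecutive pair $y_l$ (heavy), $y_{l+1}$ (light) with $y_{l-1}$ before it and at least one vertex after it, so Theorem \ref{kanoy2} gives $I(D')^{(s)}\neq I(D')^{s}$ for some $s\geq 2$. Reading $D'$ along its orientation, its penultimate vertex sits at position $l+2$ and its only in-neighbour in $D$ is the vertex at position $l+1$, namely $y_{l+1}$; as $w(y_{l+1})=1$ we get $N_D^-(\cdot)\cap V^+(D)=\phi$, and Theorem \ref{inducedpath} yields $I(D)^{(s)}\neq I(D)^{s}$. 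This settles the full range $1<l\leq m-1$: the two spare vertices $z_1,z_2$ are exactly what allow $l=m-1$, in parallel with Cases (1)--(2) of Lemma \ref{path5r}.

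For the $x$-branch the identical recipe works whenever $1<k\leq n-2$: take $D'$ on the first $k+3\leq n+1$ vertices of $x_1\to\cdots\to x_n\to z_2$, invoke Theorem \ref{kanoy2}, and observe that the penultimate vertex of $D'$ (position $k+2\leq n$) has in-neighbour $x_{k+1}$ of weight $1$, so Theorem \ref{inducedpath} applies exactly as above.

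The boundary case $k=n-1$ on the $x$-branch, i.e. $w(x_{n-1})\geq 2$ and $w(x_n)=1$, is where I expect the real difficulty. Here the light vertex $x_n$ abuts the common sink $z_2$ with no buffer playing the role that $z_1$ plays on the $y$-side. The only naturally oriented subpath containing $x_{n-1},x_n$ together with a further vertex is the whole $x$-branch $x_1\to\cdots\to x_n\to z_2$ (one cannot continue past the sink $z_2$), and there the penultimate vertex is $x_n$, whose in-neighbour is the \emph{heavy} $x_{n-1}$; thus the hypothesis $N_D^-(x_n)\cap V^+(D)=\phi$ of Theorem \ref{inducedpath} fails. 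Worse, because $z_2$ now also receives the edge $z_1\to z_2$, one checks—already on the smallest instance $n=3$—that a maximal strong vertex cover of $D$ containing $x_{n-1},x_n,z_2$ meets the $x$-branch in a set that is not strong and that sits above two different maximal strong vertex covers of the $x$-branch, precisely the configuration Theorem \ref{induced} rules out; so the general transfer theorem is unavailable with $D'$ equal to the $x$-branch as well. To close $k=n-1$ I would therefore drop the reduction-to-a-path approach and argue by hand: using Theorem \ref{minimalgenerator} to list $\mathcal{G}(I_{\subseteq C})$ for every maximal strong vertex cover $C$ of $D$ and then Lemma \ref{cooper}, I would try to exhibit an explicit monomial in $I^{(s)}=\bigcap_C (I_{\subseteq C})^{s}$ that escapes $I^{s}$, with $s=3$ (the cubic obstruction underlying Lemma \ref{atmost}) as the natural first target. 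Confirming that the all-light tail $z_2\leftarrow z_1\leftarrow y_m\leftarrow\cdots\leftarrow y_1$ does not dissolve this obstruction is, to my mind, the heart of the lemma.
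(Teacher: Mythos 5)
Your treatment of the $y$-branch (all $1 < l \leq m-1$) and of the $x$-branch for $1 < k \leq n-2$ is correct and is essentially the paper's own argument: apply Theorem \ref{kanoy2} to an initial segment of the path whose penultimate vertex has only a weight-one in-neighbour, then transfer via Theorem \ref{inducedpath}. But the case $k = n-1$ (that is, $w(x_{n-1}) \geq 2$ and $w(x_n) = 1$), which you explicitly leave open, is a genuine gap: your proposal ends with a plan to exhibit an explicit monomial in $I^{(3)} \setminus I^{3}$ using Theorem \ref{minimalgenerator} and Lemma \ref{cooper}, but this computation is never carried out, so the lemma is not proved in its stated generality.

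The idea you are missing is cheap. Since $w(z_1) = w(z_2) = 1$, the edge joining $z_1$ and $z_2$ contributes the generator $z_1 z_2$ to $I(D)$ whichever way it is oriented; reversing it to $(z_2, z_1)$ therefore changes neither $I(D)$ nor, since symbolic powers depend only on the ideal, $I(D)^{(s)}$, and it changes no weights (both vertices already have weight $1$, and no vertex of non-trivial weight becomes a source). After this reversal, $D$ is the path $x_1 \to \cdots \to x_n \to z_2 \to z_1 \leftarrow y_m \leftarrow \cdots \leftarrow y_1$, and the $x$-branch acquires exactly the two-vertex buffer you said it lacked. Take $D^{\prime\prime}$ to be the naturally oriented path on $\{x_1, \ldots, x_n, z_2, z_1\}$: the heavy interior vertex $x_{n-1}$ is followed by the light vertices $x_n, z_2, z_1$, so Theorem \ref{kanoy2} gives $I(D^{\prime\prime})^{(s)} \neq I(D^{\prime\prime})^{s}$ for some $s \geq 2$; and the penultimate vertex of $D^{\prime\prime}$ is now $z_2$, whose only in-neighbour in $D$ is $x_n$ with $w(x_n) = 1$, so $N_D^-(z_2) \cap V^+(D) = \phi$ and Theorem \ref{inducedpath} transfers the inequality to $D$. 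This is precisely how the paper closes the proof, uniformly for all $1 < k \leq n-1$; your diagnosis that the unreversed $x$-branch cannot serve as $D^{\prime}$ was correct, but no ad hoc computation with Theorem \ref{minimalgenerator} and Lemma \ref{cooper} is needed.
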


\begin{proof}
Suppose $w(y_l) \geq 2$ for some $1 < l \leq m-1$ such that  $w(y_{l+1}) =1$.

\textbf{Case (1)}  Assume that  $1 < l \leq m-2$.

By the same argument as in Lemma \ref{path5r},  we can show that  $ I(D)^{(s)} \neq I(D)^{s} $  for some $s \geq 2$.

\textbf{Case (2)}  Assume that  $ l = m-1$.

Here $w(y_m) = 1$. Let $ D^{\prime} $ be a weighted naturally oriented path with
$ V(D^{\prime}) = \{y_1, y_2,  \ldots ,\\ y_{m-1},y_{m},z_1,z_2\}$  and $E(D^{\prime}) = \{(y_i,y_{i+1}) ~|~ 1 \leq i \leq m-1 \} \cup \{(y_m,z_1), (z_1,z_2)\}$.	
Then by Theorem \ref{kanoy2},   $ {I(D^{\prime})}^{(s)} \neq {I(D^{\prime})}^s $ for some $s \geq 2$ and thus  
by Theorem \ref{inducedpath}, we get  $ I(D)^{(s)} \neq I(D)^{s} $.
\vspace*{0.1cm}\\
Suppose  $w(x_k) \geq 2$ for some $1 < k \leq n-1$ such that  $w(x_{k+1}) = 1$.  Since $w(z_1)=w(z_2)=1$, we can assume $(z_2,z_1) \in E(D)$.  Let $ D^{\prime\prime} $ be a weighted naturally oriented path with
$ V(D^{\prime\prime}) = \{x_1, x_2,  \ldots , x_{n-1},x_{n},z_2,z_1\}$  and $E(D^{\prime\prime}) = \{(x_i,x_{i+1}) ~|~ 1 \leq i \leq n-1 \}\cup \{(x_n,z_{2}), (z_2,z_1)\}$. By  the similar argument as for $D^{\prime}  $, we can show that $ I(D)^{(s)} \neq I(D)^{s} $, for some $s \geq 2$. 	
\end{proof}

\begin{theorem}\label{path7r}
	Let	$ D $ be the weighted oriented path  same as defined in Notation \ref{notation2}. Assume that  $w(x_i) \geq 2$ and $w(y_j) \geq 2$ for some $i$ and $j$. Then $ I(D)^{(s)} = I(D)^{s} $ for all $s \geq 2$  if and only if  $D$
	satisfies the condition ``there exist two indices $ 1 < l \leq m $ and $ 1 < k \leq n $ such that the set of vertices with non-trivial
	weights are precisely $ \{y_l
	,\ldots, y_m\} \cup \{x_k,\ldots, x_n\}  $".  
	
\end{theorem}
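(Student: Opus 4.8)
The plan is to derive Theorem~\ref{path7r} by packaging the two preceding results: Theorem~\ref{path7.} supplies the sufficiency and Lemma~\ref{path7}, after a short combinatorial translation, supplies the necessity. Throughout I keep the standing hypothesis that $w(x_i)\ge 2$ and $w(y_j)\ge 2$ for some $i,j$, and I record the sets
$$S_x=\{\,2\le k\le n : w(x_k)\ge 2\,\},\qquad S_y=\{\,2\le l\le m : w(y_l)\ge 2\,\},$$
which are then both non-empty. Here I use that $x_1,y_1$ are sources and so carry weight $1$, and that $w(z_1)=w(z_2)=1$ by Notation~\ref{notation2}; hence every non-trivially weighted vertex of $D$ lies among the $x$'s and the $y$'s, and the stated condition is exactly the assertion that $S_y=\{l,l+1,\dots,m\}$ and $S_x=\{k,k+1,\dots,n\}$ for some $1<l\le m$ and $1<k\le n$.

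For the implication ($\Leftarrow$) I would simply invoke Theorem~\ref{path7.}: if $D$ satisfies the condition, then its non-trivially weighted vertices are precisely $\{y_l,\dots,y_m\}\cup\{x_k,\dots,x_n\}$ with $1<l\le m$, $1<k\le n$, which is verbatim the hypothesis of that theorem, so $I(D)^{(s)}=I(D)^s$ for all $s\ge 2$.

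For ($\Rightarrow$) I would argue by contraposition, proving that failure of the condition forces $I(D)^{(s)}\neq I(D)^s$ for some $s\ge 2$. Failure means that $S_y$ is not a suffix $\{l,\dots,m\}$ or $S_x$ is not a suffix $\{k,\dots,n\}$; say the former. The elementary point is that a non-empty set $S_y\subseteq\{2,\dots,m\}$ is a suffix of $\{1,\dots,m\}$ iff it is upward closed, which in turn holds iff it has no ``immediate descent''. So if $S_y$ is not a suffix, there is an index $l$ with $l\in S_y$, $l+1\le m$ and $l+1\notin S_y$; unwinding, this is an index with $1<l\le m-1$, $w(y_l)\ge 2$ and $w(y_{l+1})=1$, which is exactly the trigger of the $y$-clause of Lemma~\ref{path7}. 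That lemma then gives $I(D)^{(s)}\neq I(D)^s$ for some $s\ge 2$, contradicting equality for all $s$. The case where $S_x$ is the offending set is symmetric, using the $x$-clause of Lemma~\ref{path7}.

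The argument is essentially bookkeeping; the genuine content resides in Theorem~\ref{path7.} and Lemma~\ref{path7}. The only step needing care, and the one I expect to be the main (minor) obstacle, is matching the index ranges produced by negating the suffix condition ($1<l\le m-1$, respectively $1<k\le n-1$) precisely with the ranges permitted in Lemma~\ref{path7}, and checking that the standing hypothesis $w(x_i),w(y_j)\ge 2$ is exactly what excludes the degenerate case in which one of $S_x,S_y$ is empty. In that degenerate case the suffix condition would fail for a trivial reason while no descent would be available to feed into Lemma~\ref{path7}, so the assumption of heavy vertices on \emph{both} arms is indispensable to the necessity direction.
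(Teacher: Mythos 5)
Your proposal is correct and is essentially the paper's own proof: the paper's argument for Theorem~\ref{path7r} is literally ``It follows from Theorem~\ref{path7.} and Lemma~\ref{path7}'', with sufficiency from the former and necessity (via contraposition) from the latter. Your write-up just makes explicit the bookkeeping the paper leaves implicit --- that, under the standing hypothesis, failure of the suffix condition on either arm produces an index with $w(y_l)\geq 2$, $w(y_{l+1})=1$ (resp.\ $w(x_k)\geq 2$, $w(x_{k+1})=1$) in exactly the ranges required by Lemma~\ref{path7}.
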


\begin{proof}
	It follows from Theorem \ref{path7.}  and  Lemma \ref{path7}.	
\end{proof}

\begin{theorem}\label{path6.}
	Let	$ D $ be the weighted oriented path  same as defined in Notation \ref{notation1}. Assume that   $w(x_i) \geq 2$ and $w(y_j) \geq 2$ for some $i$ and $j$.  Then $ I(D)^{(s)} = I(D)^{s} $ for all $s \geq 2$	if and only if

\begin{enumerate}

\item  When $w(x_n) = 1$, 
$D$
satisfies the condition ``there exist two indices $ 1 < l \leq m $ and $ 1 < k < n $ such that the set of vertices with non-trivial
weights are precisely $ \{y_l
,\ldots, y_m\} \cup \{x_k,\ldots, x_{n-1}\}  $".

\item When $w(y_m) = 1$,   
$D$  satisfies the condition ``there exist two indices $ 1 < l < m $ and $ 1 < k \leq n $ such that the set of vertices with non-trivial
weights are precisely $ \{y_l
,\ldots, y_{m-1}\} \cup \{x_k,\ldots, x_n\}  $".

\item  When $w(y_m) \neq 1$ and $w(x_n) \neq 1$,  
$D$
satisfies the condition ``there exist two indices $ 1 < l < m $ and $ 1 < k < n $ such that the set of vertices with non-trivial
weights are precisely $ \{y_l
,\ldots, y_m\} \cup \{x_k,\ldots, x_n\}  $".  
\end{enumerate}

\end{theorem}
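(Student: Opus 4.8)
The plan is to break the argument along the three displayed cases, which are distinguished by the weights of the two vertices $x_n$ and $y_m$ adjacent to the common sink $z_1$, and to reduce the two genuinely new cases to the characterization already obtained for the configuration of Notation~\ref{notation2}.

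First, Case (3), where $w(y_m)\neq 1$ and $w(x_n)\neq 1$, is exactly the statement of Theorem~\ref{path6}: with both neighbours of the sink carrying nontrivial weight, the standing hypothesis ``$w(x_i)\geq 2$ and $w(y_j)\geq 2$ for some $i,j$'' is subsumed, and the characterization is verbatim. So I would dispose of this case by citing Theorem~\ref{path6}.

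The crux is Case (1), with $w(x_n)=1$ (Case (2) being entirely symmetric). Here the decisive observation is that the edge $(x_n,z_1)$ joins two weight-one vertices, since $w(z_1)=1$ by Notation~\ref{notation1}; the monomial it contributes to the edge ideal is the symmetric $x_nz_1$, so reversing this single edge to $(z_1,x_n)$ leaves $I(D)$ unchanged \emph{as an ideal}, and therefore changes neither $I(D)^{s}$ nor $I(D)^{(s)}$ (the latter because the symbolic power is determined by the ideal together with its associated primes). One checks easily that the reorientation respects the source-weight convention, since no vertex becomes a positively weighted source. After the reversal the sink becomes $x_n$, and reading off the edges shows that $D$ has been transformed into a path of exactly the shape of Notation~\ref{notation2}: the two weight-one ``neck'' vertices are $z_1$ and $x_n$, the $y$-branch $y_1,\dots,y_m$ is unchanged, and the $x$-branch is $x_1,\dots,x_{n-1}$ (so the parameter ``$n$'' of Notation~\ref{notation2} is $n-1$ here). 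I would then apply Theorem~\ref{path7r} to this reoriented path, noting that its hypothesis of a nontrivial weight on each branch is inherited from ours and that the nontrivial $x$-weight must occur among $x_1,\dots,x_{n-1}$ because $w(x_n)=1$.

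The one step that requires genuine care --- and the main bookkeeping obstacle --- is verifying that the weight condition of Theorem~\ref{path7r} for the reoriented path translates back to precisely the condition asserted in Case (1). Under the identification above, the admissible range $1<k\leq n-1$ for the $x$-branch of Notation~\ref{notation2} becomes $1<k<n$, the nontrivial set $\{x_k,\dots,x_{n-1}\}$ matches the set in Case (1), and the $y$-branch is untouched, yielding $1<l\leq m$ and $\{y_l,\dots,y_m\}$. Case (2) is handled in the same way by instead reversing the weight-one edge $(y_m,z_1)$, which makes $y_m$ the sink and turns $D$ into a Notation~\ref{notation2} path whose long branch is $x_1,\dots,x_n$ (with neck $z_1$) and whose short branch is $y_1,\dots,y_{m-1}$; the analogous index matching then produces the condition $1<l<m$, $1<k\leq n$ with nontrivial set $\{y_l,\dots,y_{m-1}\}\cup\{x_k,\dots,x_n\}$. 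Assembling the three cases completes the proof.
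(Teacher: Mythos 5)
Your proposal is correct and follows essentially the same route as the paper: Case (3) is dispatched by Theorem \ref{path6}, and Cases (1) and (2) are reduced to Theorem \ref{path7r} by exploiting that an edge joining two weight-one vertices contributes a symmetric monomial, so its orientation may be flipped without changing $I(D)$ (hence neither its ordinary nor its symbolic powers), after which a renaming of the neck vertices exhibits the path in the form of Notation \ref{notation2}. The only cosmetic difference is in Case (2), where the paper merely renames $y_m$ and $z_1$ as $z_1$ and $z_2$ without any reversal, while you reverse $(y_m,z_1)$ to make $y_m$ the sink; both identifications are valid and yield the same index translation.
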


\begin{proof}
(1)  Since $w(x_n) = 1$, we can assume $(z_1,x_n) \in E(D)$.  If  we rename the vertex   $x_n$ by $z_2$, then the proof follows from  Theorem \ref{path7r}.

(2)  Note that  $w(y_m) = 1$.  If  we rename the vertices $y_m$ and $ z_1  $ by $z_1$ and $z_2$, respectively, then the proof follows from  Theorem \ref{path7r}.

(3) It follows from  Theorem \ref{path6}.
\end{proof}

\begin{theorem}\label{path6r}
	Let	$ D $ be a weighted oriented path with
	$ V(D) = \{y_1, y_2, \ldots , y_m, z_1,  x_n,   \ldots ,\\  x_2, x_1\}$  and $E(D) = \{(y_i,y_{i+1}) ~|~ 1 \leq i \leq m-1 \} \cup \{(y_m,z_{1}),  (x_n,z_{1})\} \cup   \{( x_i,  x_{i+1}) ~|~ 1 \leq i \leq n-1 \}$. Assume that   $w(x_i) \geq 2$ and $w(y_j) \geq 2$ for some $i$ and $j$.  Then $ I(D)^{(s)} = I(D)^{s} $ for all $s \geq 2$	if and only if

	\begin{enumerate}

		\item  When $w(x_n) = 1$, 
		$D$
		satisfies the condition ``there exist two indices $ 1 < l \leq m $ and $ 1 < k < n $ such that the set of vertices except $z_1$ with non-trivial
		weights  are precisely $ \{y_l
		,\ldots, y_m\} \cup \{x_k,\ldots, x_{n-1}\}  $".

		\item When $w(y_m) = 1$,   
		$D$  satisfies the condition ``there exist two indices $ 1 < l < m $ and $ 1 < k \leq n $ such that the set of vertices except $z_1$ with non-trivial
		weights  are precisely $ \{y_l
		,\ldots, y_{m-1}\} \cup \{x_k,\ldots, x_n\}  $".

		\item  When $w(y_m) \neq 1$ and $w(x_n) \neq 1$,  
		$D$
		satisfies the condition ``there exist two indices $ 1 < l < m $ and $ 1 < k < n $ such that the set of vertices except $z_1$ with non-trivial
		weights  are precisely $ \{y_l
		,\ldots, y_m\} \cup \{x_k,\ldots, x_n\}  $".  
	\end{enumerate}

\end{theorem}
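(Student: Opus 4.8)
The plan is to reduce Theorem \ref{path6r} directly to Theorem \ref{path6.} by invoking Lemma \ref{cor3}, since the only structural difference between the path $D$ considered here and the path of Notation \ref{notation1} is that the weight of the common sink $z_1$ is no longer required to be trivial. In other words, once we are allowed to trivialize $w(z_1)$, we land exactly in the situation already settled by Theorem \ref{path6.}.

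First I would observe that $z_1$ is the \emph{unique} sink of $D$: both $(y_m,z_1)$ and $(x_n,z_1)$ are oriented into $z_1$ and $z_1$ has no outgoing edge, whereas every other vertex either is a source ($y_1$ and $x_1$) or has an outgoing edge along its path, so it is not a sink. Let $D^{\prime}$ be the weighted oriented graph obtained from $D$ by replacing $w(z_1)$ with the trivial weight $1$. Then $D^{\prime}$ is precisely the path described in Notation \ref{notation1}, and by Lemma \ref{cor3} we have $I(D)^{(s)}=I(D)^{s}$ for all $s\geq 1$ if and only if $I(D^{\prime})^{(s)}=I(D^{\prime})^{s}$ for all $s\geq 1$. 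Since $I^{(1)}=I$ always holds, this is the same as the equality for all $s\geq 2$.

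Next I would match the hypotheses and the three cases. Because neither $x_n$ nor $y_m$ is a sink, their weights are identical in $D$ and $D^{\prime}$; hence the trichotomy ``$w(x_n)=1$'', ``$w(y_m)=1$'', and ``$w(y_m)\neq 1$ and $w(x_n)\neq 1$'' is the same whether read off $D$ or $D^{\prime}$. Likewise the standing assumption that $w(x_i)\geq 2$ and $w(y_j)\geq 2$ for some $i,j$ concerns only $x$- and $y$-vertices and is therefore preserved under the passage to $D^{\prime}$. Finally, since the only change is trivializing $w(z_1)$, the set of vertices with non-trivial weight in $D^{\prime}$ is exactly the set of vertices of $D$ \emph{other than} $z_1$ carrying non-trivial weight; this is why each condition in Theorem \ref{path6r} is phrased as ``the set of vertices except $z_1$ with non-trivial weights''. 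Thus each of the three weight conditions stated for $D$ is literally the corresponding condition of Theorem \ref{path6.} applied to $D^{\prime}$.

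Applying Theorem \ref{path6.} to $D^{\prime}$ in each of its three cases and translating back through Lemma \ref{cor3} then yields the claimed equivalence for $D$. The argument is genuinely a synthesis of two earlier results, so the only place that needs care is the bookkeeping of the preceding paragraph: confirming that $z_1$ is the unique sink, so that Lemma \ref{cor3} alters exactly one weight, and checking that this single alteration neither disturbs the case distinction nor changes the weight profile on the $x_i$ and $y_j$. Once the sink is correctly identified, this verification is routine and completes the proof.
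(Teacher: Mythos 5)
Your proposal is correct and follows essentially the same route as the paper: identify $z_1$ as the unique sink, use Lemma \ref{cor3} to trivialize its weight, and then invoke Theorem \ref{path6.}. The paper's own proof is just a terser version of this (splitting into $w(z_1)=1$ and $w(z_1)\geq 2$), while you additionally spell out the bookkeeping that the case trichotomy and the weight conditions transfer unchanged, which the paper leaves implicit.
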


\begin{proof}
Here $z_1$ is the only sink vertex in $D$. If $w(z_1) = 1$,
the proof follows from    Theorem \ref{path6.} and 
if $w(z_1) \geq 2 $, then  the proof follows from  Lemma \ref{cor3}  and   Theorem \ref{path6.}.
\end{proof}

\section{Symbolic powers of weighted rooted trees}

In the computation of symbolic powers of edge ideals of weighted oriented graphs,  we always need to know all the maximal strong vertex covers. In this section, we  give a new technique to find all the maximal strong vertex covers of a particular class of weighted  oriented  graphs. Finally, we show the  equality of ordinary and symbolic powers of edge ideals of certain class of weighted rooted trees.

\begin{lemma}\cite[Lemma 47]{pitones}\label{v.c}
	Let $ D $ be  a  weighted  oriented  graph  such  that $  I(D) \subseteq (x_{i_1}^{a_1},\ldots,x_{i_s}^{a_s}).$ Then $ \{{x_{i_1}},\ldots,{x_{i_s}}\} $ is a vertex cover of $ D $.   	
\end{lemma}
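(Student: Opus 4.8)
The plan is to verify the vertex-cover condition one edge at a time, reducing everything to the divisibility criterion for membership in a monomial ideal. First I would fix an arbitrary edge $(x_p, x_q) \in E(D)$; by the definition of the edge ideal, the monomial $x_p x_q^{w_q}$ is a generator of $I(D)$ and therefore, by hypothesis, lies in $(x_{i_1}^{a_1},\ldots,x_{i_s}^{a_s})$.

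Next I would use the fact that $(x_{i_1}^{a_1},\ldots,x_{i_s}^{a_s})$ is a monomial ideal: a monomial belongs to it precisely when it is divisible by one of the generating monomials $x_{i_t}^{a_t}$. Applied to $x_p x_q^{w_q}$, this yields an index $t \in \{1,\ldots,s\}$ with $x_{i_t}^{a_t} \mid x_p x_q^{w_q}$. Since each $a_t$ is a positive integer, in particular $x_{i_t} \mid x_p x_q^{w_q}$, so $x_{i_t} \in \supp(x_p x_q^{w_q}) = \{x_p, x_q\}$ (the endpoints are distinct because the underlying graph $G$ is simple). Hence $x_{i_t}$ equals $x_p$ or $x_q$, and in either case this endpoint belongs to $C := \{x_{i_1},\ldots,x_{i_s}\}$, so the edge $(x_p,x_q)$ is covered by $C$.

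Since the edge was arbitrary, every edge of $D$ has an endpoint in $C$, which is exactly the assertion that $C$ is a vertex cover of $D$. I do not expect any genuine obstacle here: the proof is a direct unwinding of definitions, and the only two points requiring care are invoking the monomial-ideal membership criterion and using $a_t \geq 1$ to descend from divisibility by $x_{i_t}^{a_t}$ to divisibility by $x_{i_t}$ itself. Everything else is immediate from the definitions of the edge ideal and of a vertex cover.
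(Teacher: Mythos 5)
Your proof is correct: fixing an arbitrary edge $(x_p,x_q)$, using that $x_px_q^{w_q}\in I(D)\subseteq(x_{i_1}^{a_1},\ldots,x_{i_s}^{a_s})$, invoking the divisibility criterion for membership in a monomial ideal, and descending from $x_{i_t}^{a_t}\mid x_px_q^{w_q}$ to $x_{i_t}\in\{x_p,x_q\}$ via $a_t\geq 1$ is exactly the natural argument. Note that the paper itself does not prove this statement --- it is quoted from Pitones, Reyes and Toledo \cite[Lemma 47]{pitones} --- so there is no in-paper proof to compare against, but your unwinding of the definitions is the standard one and has no gaps.
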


After  fixing the value of each $a_i$ with its corresponding weight, we get strong vertex cover in the following  result.

\begin{lemma}\label{maximal}
	Let $ D $ be  a  weighted  oriented  graph  such  that $  I(D) \subseteq (x_{i_1}^{w_{i_1}},\ldots,x_{i_s}^{w_{i_s}})$, where $s$ is the least possible value. Then $ \{{x_{i_1}},\ldots,{x_{i_s}}\} $ is a maximal strong vertex cover of $ D $.     	
\end{lemma}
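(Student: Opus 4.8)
The plan is to first invoke Lemma \ref{v.c} to get that $C := \{x_{i_1},\dots,x_{i_s}\}$ is a vertex cover of $D$, and then to verify the two requirements separately: that $C$ is strong, and that it is maximal among strong vertex covers. Both parts hinge on one preliminary observation, which I would establish first, namely that every vertex of $C$ carrying a nontrivial weight lies in $L_2^D(C)\cup L_3^D(C)$, equivalently $L_1^D(C)\cap V^+(D)=\emptyset$. Indeed, if $u\in L_1^D(C)$ there is $z\in N_D^+(u)\cap C^c$, so the generator $u z^{w_z}$ of $I(D)$ must be divisible by some $x_{i_k}^{w_{i_k}}$; since $z\notin C$ the only candidate is $u^{w_u}$, and as $u$ occurs to the first power in $u z^{w_z}$ this forces $w_u=1$.

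For strongness I would argue by contradiction using the minimality of $s$. Suppose some $x\in L_3^D(C)$ fails the SVC condition. By Lemma \ref{L3} we have $N_D(x)\subseteq C$, so by the preliminary observation $N_D^-(x)\cap V^+(D)\subseteq L_2^D(C)\cup L_3^D(C)$; hence failure of SVC forces $N_D^-(x)\cap V^+(D)=\emptyset$. I then claim that $C':=C\setminus\{x\}$ still satisfies $I(D)\subseteq (x_{i_k}^{w_{i_k}}\mid x_{i_k}\in C')$, which contradicts the minimality of $s$. To check the claim it suffices to re-cover every generator $g$ of $I(D)$ that was previously covered only by $x^{w_x}$: if $x$ is the terminal vertex of the corresponding edge $(u,x)$, then $u\in N_D^-(x)\subseteq C'$ has $w_u=1$, so $u\mid g$; if $x$ is the initial vertex of an edge $(x,v)$, then $v\in N_D^+(x)\subseteq C'$ and $v^{w_v}\mid g$. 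Thus every $x\in L_3^D(C)$ satisfies SVC and $C$ is strong.

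For maximality, suppose toward a contradiction that $C\subsetneq C''$ for some strong vertex cover $C''$, and pick $x\in C''\setminus C$. Since $C$ is a vertex cover and $x\notin C$, every edge at $x$ has its other endpoint in $C$, so $N_D(x)\subseteq C\subseteq C''$ and hence $x\in L_3^D(C'')$ by Lemma \ref{L3}. Strongness of $C''$ then yields $y\in N_D^-(x)\cap V^+(D)\cap(L_2^D(C'')\cup L_3^D(C''))$, where $y\in N_D(x)\subseteq C$ and $w_y\ge 2$. But then the generator $y x^{w_x}$ of $I(D)$ has support $\{y,x\}$ with $x\notin C$, so the only possible divisor among the $x_{i_k}^{w_{i_k}}$ is $y^{w_y}$; since $y$ appears to the first power this is impossible, contradicting $I(D)\subseteq(x_{i_1}^{w_{i_1}},\dots,x_{i_s}^{w_{i_s}})$. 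Hence no such $C''$ exists and $C$ is maximal.

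The divisibility bookkeeping for re-covering generators and for the final contradiction is routine; the step that carries the real content is recognizing that minimality of $s$ is exactly what rules out a non-SVC vertex in $L_3^D(C)$, once the observation $L_1^D(C)\cap V^+(D)=\emptyset$ has reduced the SVC condition to the mere existence of an in-neighbor of nontrivial weight. I expect the main obstacle to be phrasing the re-covering argument cleanly enough that the contradiction with the minimality of $s$ is unambiguous.
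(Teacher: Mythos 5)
Your proof is correct and follows essentially the same route as the paper's: Lemma \ref{v.c} supplies the vertex cover, the minimality of $s$ rules out a vertex of $L_3^D(C)$ lacking a non-trivially weighted in-neighbour (the paper's ``drop the vertex'' contradiction), the containment $I(D)\subseteq(x_{i_1}^{w_{i_1}},\ldots,x_{i_s}^{w_{i_s}})$ forces such in-neighbours out of $L_1^D(C)$, and maximality follows from the same divisibility contradiction applied to a generator $y x^{w_x}$ with $x\notin C$ and $w_y\geq 2$. Your preliminary observation that $L_1^D(C)\cap V^+(D)=\emptyset$ simply packages the paper's case analysis more cleanly; the underlying ingredients coincide.
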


\begin{proof}  
	Let $C = \{{x_{i_1}},\ldots,{x_{i_s}}\}.$ By Lemma \ref{v.c}, $C$ is a  vertex cover of $ D $. Let $x_{i_j} \in  L_3^D(C).$ By Lemma \ref{L3}, $  N_D(x_{i_j}) \subset C.$ If $  N_D^{-}(x_{i_j}) =\phi,$ then  $  I(D) \subseteq (x_{i_1}^{w_{i_1}},\ldots,x_{i_{j-1}}^{w_{i_{j-1}}}, x_{i_{j+1}}^{w_{i_{j+1}}} ,\ldots,\\x_{i_s}^{w_{i_s}}).$ Since $s$ is the least possible value, its a contradiction. Therefore $  N_D^{-}(x_{i_j})    \neq \phi.$ Let  $  N_D^{-}(x_{i_j}) =\{{x_{k_1}},\ldots,{x_{k_t}}    \}.$ If $w_{k_1}=\cdots={w_{k_t}}=1 ,$ then $  I(D) \subseteq (x_{i_1}^{w_{i_1}},\ldots,x_{i_{j-1}}^{w_{i_{j-1}}}, x_{i_{j+1}}^{w_{i_{j+1}}} ,\ldots,\\x_{i_s}^{w_{i_s}})$ and its a contradiction  by the previous argument. Thus at least one of $x_{k_1},\ldots,{x_{k_t}}$ has non-trivial weight. Without loss of generality let $w_{k_1}  \neq 1  $. Suppose $x_{k_1} \in L_1^D(C)$. That means  there is some  $x_{l_1} \in N_D^{+}(x_{k_1}) \cap C^c$. Here $({x_{k_1},x_{l_1}}) \in E(D)$. Since $x_{k_1}  \notin (x_{i_1}^{w_{i_1}},\ldots,x_{i_s}^{w_{i_s}})    $ and   $x_{l_1} \notin C$, we have   $x_{k_1}x_{l_1}^{w_{l_1}}  \notin (x_{i_1}^{w_{i_1}},\ldots,x_{i_s}^{w_{i_s}})    $. Thus $  I(D) \nsubseteq (x_{i_1}^{w_{i_1}},\ldots,\\x_{i_s}^{w_{i_s}})$, which is a contradiction.
 Therefore  $x_{k_1} \in L_2^D(C) \cup L_3^D(C)$.  Note that  $ x_{k_1}  \in    N_D^{-}(x_{i_j})\cap V^+(D) \cap [L_2^D(C) \cup L_3^D(C)] $, i.e.,    $ x_{i_j} $ satisfies SVC condition
	on  $ C $. Hence $C$ is  strong.  Suppose $C$ is not maximal. That means,  there exists a strong vertex cover  $ C^{\prime}$   of $D$  such that  
	$C \subsetneq C^{\prime}$. Let $x_q \in  C^{\prime}  \setminus  C.$ Since $x_q \notin C$  and $C$ is a vertex cover of $D$, $N_D(x_q)  \subset C  $. Here   $N_D(x_q)  \subset  C^{\prime}$ and so by Lemma \ref{L3}, $x_q \in L_3^D(C^{\prime})$. Since $C^{\prime}$ is strong, there is some $x_{p}  \in N_D^{-}(x_{q}) \cap V^+(D) \cap [{L_2^D{(C^{\prime})}} \cup   {L_3^D{(C^{\prime})}} ]$. Note that $x_p \in C$,  $w(x_p) \geq 2$, $x_q \notin C$ and $(x_p,x_q) \in E(D)$. Then  $  x_px_q^{w_q} \notin (x_{i_1}^{w_{i_1}},\ldots,x_{i_s}^{w_{i_s}})$ and  so  $  I(D) \nsubseteq (x_{i_1}^{w_{i_1}},\ldots,x_{i_s}^{w_{i_s}})$, which is a contradiction. Hence $C$ is maximal.     	
\end{proof}

\begin{remark}
Converse of the above lemma need not be true in general.

\hspace*{0.5cm}For example consider the  weighted oriented path $D$ as in Figure \ref{fig.6}.  Then $I(D) =   (x_1x_2^{7},x_2x_3,x_3x_4)$. Using Macaulay $2$,  the strong vertex covers of $ D $ are $ \{x_1, x_3\},$ $ \{x_2, x_3\}$  and    $\{x_2,x_4\}$.  Note that $\{x_2,x_4\}$ is a maximal strong vertex cover, but $I(D) \nsubseteq    (x_2^{7},x_4)$  because    $x_2x_3 \notin  (x_2^{7},x_4)$.	
\begin{figure}[!ht]
	\begin{tikzpicture}[scale=0.7]
	\begin{scope}[ thick, every node/.style={sloped,allow upside down}] 
	\definecolor{ultramarine}{rgb}{0.07, 0.04, 0.56} 
	\definecolor{zaffre}{rgb}{0.0, 0.08, 0.66}
	
	\draw [fill, black](0,0) --node {\midarrow}(1.5,0);  
	\draw [fill, black](1.5,0) --node {\midarrow}(3,0);
	\draw [fill, black](3,0) --node {\midarrow}(4.5,0);

	\draw[fill] [fill] (0,0) circle [radius=0.04];
	\draw[fill] [fill] (1.5,0) circle [radius=0.04];
	\draw[fill] [fill] (3,0) circle [radius=0.04];
	\draw[fill] [fill] (4.5,0) circle [radius=0.04];

	\node at (0,0.5) {$x_1$};
	\node at (1.5,0.5) {$x_2$};
	\node at (3,0.5) {$x_{3}$};  
	\node at (4.5,0.5) {$x_{4}$};
	
	\node at (0,-0.5) {$1$};
	\node at (1.5,-0.5) {$7$};
	\node at (3,-0.5) {$1$};  
	\node at (4.5,-0.5) {$1$};

	\node at (2.5,-1.2) {$D$};


	\end{scope}
	\end{tikzpicture}
	\caption{A weighted oriented path $D$ of length $3$.}\label{fig.6}
\end{figure}
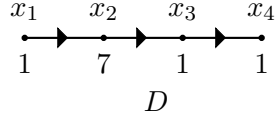

\end{remark}

We see that the converse of Lemma \ref{maximal} is true under certain condition on  weights of vertices and orientation of edges of $D$.  To prove the converse part, the following lemma is  important.

\begin{lemma}\label{maximal3}
	Let $ D $ be a  weighted oriented graph such that at most one edge is oriented into each vertex  and $w(x) \geq 2$ if $\deg_D(x)\geq 2 $ for all $x \in V(D)$.  Let $C$ be a strong  vertex cover of $D$ with $x_i \in  L_1^{D}(C) $ and $w(x_i)  \neq 1$. Then there exists a strong vertex cover  $ C^{\prime}$   of $D$  such that  
	$C \subsetneq C^{\prime}$.
\end{lemma}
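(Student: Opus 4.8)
The plan is to exhibit a strong vertex cover strictly larger than $C$ by enlarging $C$ with the ``outside'' out-neighbours of \emph{all} the offending vertices simultaneously. First I would record two structural facts forced by the hypotheses. Since $C$ is a vertex cover, its complement $C^c$ is an independent set; hence every vertex of $C^c$ has all its neighbours in $C$, and no two vertices of $C^c$ are adjacent. Moreover, as at most one edge is oriented into each vertex, every vertex has a \emph{unique} in-neighbour whenever its in-neighbourhood is nonempty, and $w(x)=1$ forces $\deg_D(x)\le 1$.

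Set $B=L_1^D(C)\cap V^+(D)$. By hypothesis $x_i\in B$, so $B\neq\emptyset$. Let $S=\bigcup_{x\in B}\big(N_D^+(x)\cap C^c\big)$ and put $C'=C\cup S$. Since $x_i\in L_1^D(C)$, we have $N_D^+(x_i)\cap C^c\neq\emptyset$, so $\emptyset\neq S\subseteq C^c$ and $C\subsetneq C'$; as $C$ is a vertex cover, so is $C'$. Because $S\subseteq C^c$ is independent, for each $x_j\in S$ all neighbours of $x_j$ lie in $C\subseteq C'$, whence $N_D[x_j]\subseteq C'$ and $x_j\in L_3^D(C')$ by Lemma \ref{L3}; also the unique in-neighbour of $x_j$ is the vertex of $B$ that placed $x_j$ into $S$.

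It remains to show that $C'$ is strong, i.e.\ that every $v\in L_3^D(C')$ satisfies the SVC condition on $C'$, and I would split $L_3^D(C')$ into the vertices already in $L_3^D(C)$, the vertices of $S$, and the vertices of $C$ that newly enter $L_3$. For $v\in L_3^D(C)$, Lemma \ref{svc1} applied to $C\subseteq C'$ yields the SVC condition on $C'$. For $v=x_j\in S$, its in-neighbour $x\in B$ satisfies $N_D^+(x)\cap C^c\subseteq S$, so $N_D^+(x)\cap (C')^c=\emptyset$ and $x\notin L_1^D(C')$; since $x\in V^+(D)$, the SVC condition for $x_j$ holds. A vertex $v\in C$ can enter $L_3^D(C')$ only if $\emptyset\neq N_D(v)\cap C^c\subseteq S$, so $v$ is adjacent to $S$ with all its outside neighbours in $S$; such a $v$ is either an out-neighbour of some $x_j\in S$ (then $x_j$ is its unique in-neighbour, $x_j\in L_3^D(C')$, and $w(x_j)\ge2$ because $x_j$ then has degree $\ge2$, giving the SVC condition) or the in-neighbour of some $x_j\in S$, in which case $v\in B$. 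In this decisive subcase $v\in B\cap L_3^D(C')$ has a unique in-neighbour $x_p$ (which exists since $v\in V^+(D)$ is not a source). If $x_p\in S$ then $x_p$ has degree $\ge2$, so $w(x_p)\ge2$ and $x_p\in L_3^D(C')$, giving the SVC condition. If $x_p\in C$, then $w(x_p)\ge2$: otherwise $x_p$ would be a degree-one source whose only neighbour $v$ lies in $C$, forcing $x_p\in L_3^D(C)$ with empty in-neighbourhood and contradicting the strongness of $C$. Finally $x_p\notin L_1^D(C')$, for an out-neighbour of $x_p$ in $(C')^c=C^c\setminus S$ would place $x_p$ in $L_1^D(C)\cap V^+(D)=B$, whence that out-neighbour would already lie in $S$, a contradiction. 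Thus the SVC condition holds for $v$, so $C'$ is strong and $C\subsetneq C'$, as required.

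I expect the main obstacle to be precisely this last step. Enlarging $C$ by the out-neighbours of a \emph{single} vertex can demote that vertex into $L_3^D(C')$ and transfer its SVC obligation to its in-neighbour, which may itself be an offending $L_1$-vertex; this can cascade backwards, and even cycle, along the unique in-neighbour chains permitted by the in-degree hypothesis. Absorbing the outside out-neighbours of the entire set $B$ at once is what defuses the cascade, since any in-neighbour capable of reviving membership in $L_1^D(C')$ must itself belong to $B$ and hence already have had its outside out-neighbours swallowed into $S$.
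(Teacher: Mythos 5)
Your proposal is correct and is essentially the paper's own proof: you build the same enlarged cover $C'=C\cup S$ by absorbing the $C^c$-out-neighbours of \emph{all} vertices of $L_1^D(C)\cap V^+(D)$ at once, and verify strongness by the same three-way split (old $L_3$-vertices via Lemma \ref{svc1}, new vertices of $S$ via their unique in-neighbour in $B$, and vertices of $C$ newly entering $L_3$, split according to whether they are out-neighbours or the unique in-neighbour of a vertex of $S$). The only differences are cosmetic, e.g.\ you rule out $w(x_p)=1$ by the contrapositive of the degree hypothesis where the paper rules out $\deg_D(x_p)=1$ directly, and you split explicitly on $x_p\in S$ versus $x_p\in C$ where the paper folds this into its $L_1$-membership argument.
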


\begin{proof}
	
		Let $L =\{  x~|~x \in L_1^D(C) \cap V^+(D) \}=   \{x_{i_1},x_{i_2},\ldots,x_{i_l}\}  $.  By our  assumption  $L  \neq  \phi$.  Let $ J = [N_{D}^+ (x_{i_1})\cap {{C}}^c] \cup \cdots \cup  [N_{D}^+ (x_{i_l})\cap {{C}}^c]  = \{x_{j_1},x_{j_2},\ldots,x_{j_r}\}$. Let $C^{\prime} = C  \cup J$. Then    $ x_{i_m} \notin    L_1^{D}(C^{\prime}) $  because $ N_{D}^+ (x_{i_m})\cap {{C^{\prime}}}^c = \phi $ for $1 \leq m \leq l$.  Since  $J \subset C^c$ and  $C$ is a vertex cover of $D$, $N_D(x_{j_n}) \subseteq C \subset {{C^{\prime}}}$  for each $x_{j_n}  \in J$. By Lemma \ref{L3}, $J \subset L_3^{D}(C^{\prime}) $. We claim $C^{\prime}$ is a strong vertex cover of $D$.  
	   Note that $     L_3^{D} ({{{C}}}) \cup \{x_{j_1},x_{j_2},\ldots,x_{j_r}\}   \subseteq  L_3^{D}(C^{\prime})$. Let $x_l \in L_3^{D}(C)$. Then $ x_l $ satisfies SVC condition
	   on  $ {C} $ because $C$ is strong. By Lemma \ref{svc1},  $x_l \in L_3^{D}(C^{\prime})$  and  $ x_l $ satisfies SVC condition
	   on  $ C^{\prime} $.  Without loss of generality let $ x_{j_1} \in   N_{D}^+ (x_{i_1}) $.  
	    Since $  x_{i_1} \in N_{D}^- (x_{j_1}) \cap V^+(D)\cap  [L_2^{D}(C^{\prime}) \cup  L_3^{D}(C^{\prime})]$,  $x_{j_1}$
	satisfies SVC condition on $ C^{\prime} $. By the similar argument, we  can show $x_{j_t}$
	satisfies SVC condition on $ C^{\prime} $ for $2 \leq t \leq r$.  	If $     L_3^{D} ({{{C}}}) \cup \{x_{j_1},x_{j_2},\ldots,x_{j_r}\}   =  L_3^{D}(C^{\prime})$, then  each element of $L_3^{D}(C^{\prime})$  satisfies SVC condition on $ C^{\prime} $ and hence $ C^{\prime} $ is  strong.  Now we assume that  $     L_3^{D} ({{{C}}}) \cup \{x_{j_1},x_{j_2},\ldots,x_{j_r}\}   \subsetneq  L_3^{D}(C^{\prime})$.
Let  $x_k  \in   L_3^{D}(C^{\prime}) \setminus [L_3^{D} ({{{C}}}) \cup \{x_{j_1},x_{j_2},\ldots,x_{j_r}\} ]$. Then $ x_k $ lies in the neighbourhood of  $ x_{j_t} $ for
	some $ t \in  [r]$.  Without loss of generality we can assume that $ x_{j_t} \in   N_{D}^+ (x_{i_1}) $. By definition of $D$,  $  N_{D}^- (x_{j_t})  =  \{x_{i_1}\}$.

\textbf{Case (1)} Suppose  $x_k \in N_{D}^- (x_{j_t})$.  Then  $x_k = x_{i_1} $.

 Since $w(x_{i_1})  \geq  2  ,$ there is $ (x_p,x_{i_1})  \in  E(D) $ for some $x_p \in  V(D)$.   If $\deg_D(x_p) = 1$, then $N_D(x_p) = \{x_{i_1}\}$.  Since  $x_{i_1}  \in   L_3^{D}(C^{\prime})$, by Lemma \ref{L3}, $N_D(x_{i_1}) \subset  C^{\prime}$  and  so  $ x_p \in  C^{\prime}   $. Here $x_p \notin J$ because  $ N_{D}^- (x_{p})  =  \phi$. This implies $ x_p \in  C$.  We know $ x_{i_1} \in C$. Then by Lemma \ref{L3},   $x_p  \in L_3^D(C)$. Since $ N_{D}^- (x_{p})  =  \phi$,  $x_p  $  does not satisfy the SVC condition on $C$. Its  a contradiction because $C$  is strong. Therefore  $\deg_D(x_p) \geq 2$ and by the definition of $D$, $w(x_p) \geq 2$. Suppose  $x_p  \in  L_1^{D}(C^{\prime})$.  That means there exists some     $x_q \in N_{D}^+ (x_p)\cap {C^{\prime}}^c $.  Since $C \subsetneq  C^{\prime}$,  $x_q \in N_{D}^+ (x_p)\cap {C}^c $. Thus  $x_p  \in  L_1^{D}(C) $  and  so $x_p \in L$, which is a contradiction because each element of $L$ $  \notin  L_1^{D}(C^{\prime})$. Therefore $x_p  \notin  L_1^{D}(C^{\prime})$.    Then   $x_{p} \in N_D^{-}(x_{i_1}) \cap V^+(D) \cap [{L_2^D{(C^{\prime})}} \cup   {L_3^D{(C^{\prime})}} ]$, i.e., $ x_{i_1} = x_k $ satisfies SVC condition
on  $ C^{\prime} $.

\textbf{Case (2)} Suppose $x_k \in N_{D}^+ (x_{j_t})$.  Then $x_k \neq x_{i_1} $.               

 Note that  $(x_{i_1},x_{j_t})$ and $(x_{j_t},x_k)  \in  E(D)$.
 Since $\deg_D(x_{j_t}) \geq  2$, by the definition of $D$,  $w(x_{j_t}) \geq  2$.  This implies   $x_{j_t} \in N_D^{-}(x_k) \cap V^+(D) \cap   {L_3^D{(C^{\prime})}} $, i.e., $ x_k $ satisfies SVC condition
	on  $ C^{\prime} $.

Therefore each element of $L_3^{D}(C^{\prime})$  satisfies SVC condition on $ C^{\prime} $. Hence $ C^{\prime} $ is  strong.
\end{proof}

\begin{theorem}\label{maximal2}  
	Let $ D $ be a  weighted oriented graph such that at most one edge is oriented into each vertex  and $w(x) \geq 2$ if $\deg_D(x)\geq 2 $ for all $x \in V(D)$. Then $  I(D) \subseteq (x_{i_1}^{w_{i_1}},\ldots,x_{i_s}^{w_{i_s}})$, where $s$ is the least possible value if and only if $ \{{x_{i_1}},\ldots,{x_{i_s}}\} $ is a maximal strong vertex cover of $ D $.            	
\end{theorem}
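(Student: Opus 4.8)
The plan is to establish the two implications separately. The forward direction — that a containment $I(D) \subseteq (x_{i_1}^{w_{i_1}},\ldots,x_{i_s}^{w_{i_s}})$ with $s$ least possible forces $\{x_{i_1},\ldots,x_{i_s}\}$ to be a maximal strong vertex cover — is precisely Lemma \ref{maximal}, so nothing remains there. All the work lies in the converse: starting from a maximal strong vertex cover $C=\{x_{i_1},\ldots,x_{i_s}\}$, I would produce the containment and then verify that $s$ cannot be lowered.

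For the containment, I would first pin down the generators of the irreducible ideal $I_C$. Writing $C = L_1^D(C)\sqcup L_2^D(C)\sqcup L_3^D(C)$, recall $I_C=(L_1^D(C))+(x_j^{w_j}\mid x_j\in L_2^D(C)\cup L_3^D(C))$. The only obstruction to identifying $I_C$ with $(x_{i_1}^{w_{i_1}},\ldots,x_{i_s}^{w_{i_s}})$ is that a vertex of $L_1^D(C)$ contributes the linear form $x_i$ rather than $x_i^{w_i}$. This is exactly where maximality and the standing hypotheses enter, via Lemma \ref{maximal3}: if some $x_i\in L_1^D(C)$ had $w_i\neq 1$, then Lemma \ref{maximal3} (which needs precisely ``at most one edge into each vertex and $w(x)\geq 2$ when $\deg_D(x)\geq 2$'') would produce a strong vertex cover $C'\supsetneq C$, contradicting the maximality of $C$. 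Hence every vertex of $L_1^D(C)$ has trivial weight, so $x_i=x_i^{w_i}$ there and $I_C=(x_{i_1}^{w_{i_1}},\ldots,x_{i_s}^{w_{i_s}})$. Combined with Lemma \ref{edge}, which gives $I(D)\subseteq I_C$, this yields the desired containment.

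It then remains to show that $s$ is least possible, i.e.\ that no generator can be dropped. I would argue by contradiction: suppose $I(D)\subseteq (x_{i_1}^{w_{i_1}},\ldots,\widehat{x_{i_j}^{w_{i_j}}},\ldots,x_{i_s}^{w_{i_s}})$ for some $j$. By Lemma \ref{v.c}, $C\setminus\{x_{i_j}\}$ is then a vertex cover, which forces $N_D(x_{i_j})\subseteq C$ and hence $x_{i_j}\in L_3^D(C)$ by Lemma \ref{L3}. Because $C$ is strong, there is an edge $(y,x_{i_j})\in E(D)$ with $y\in L_2^D(C)\cup L_3^D(C)\subseteq C$ and $w(y)\geq 2$. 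The generator $y\,x_{i_j}^{w_{i_j}}$ of $I(D)$ is supported only on $\{y,x_{i_j}\}$; since $x_{i_j}^{w_{i_j}}$ has been removed and $y^{w(y)}\nmid y\,x_{i_j}^{w_{i_j}}$ (as $w(y)\geq 2$ while $y$ occurs to the first power, and $y\neq x_{i_j}$), no surviving generator divides this monomial, so $y\,x_{i_j}^{w_{i_j}}$ lies outside the reduced ideal, contradicting the containment. Thus no generator is redundant and $s$ is least possible.

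The main obstacle, and the step that genuinely consumes the hypotheses of the theorem, is the passage from ``$x_i\in L_1^D(C)$ with $w_i\neq 1$'' to a strictly larger strong vertex cover; everything there is delegated to Lemma \ref{maximal3}, whose proof is where the structural assumptions on in-degrees and weights are actually used. The remark preceding the theorem (the path $x_1\to x_2\to x_3\to x_4$ with $w(x_2)=7$, where $\{x_2,x_4\}$ is maximal strong yet $I(D)\nsubseteq(x_2^{7},x_4)$) confirms that without these hypotheses the converse fails, so the argument cannot avoid invoking them.
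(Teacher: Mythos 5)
Your proof is correct, and in its essentials it follows the paper's route: the forward implication is delegated to Lemma \ref{maximal} exactly as in the paper, and the containment $I(D)\subseteq(x_{i_1}^{w_{i_1}},\ldots,x_{i_s}^{w_{i_s}})$ turns on the same pivotal step, namely that maximality of $C$ together with Lemma \ref{maximal3} forces every vertex of $L_1^D(C)$ to have trivial weight. (The paper runs this generator-by-generator: for $x_ix_j^{w_j}\in I(D)$ with $x_j\notin C$ one gets $x_i\in L_1^D(C)$, hence $w(x_i)=1$; you instead identify $I_C$ with the pure-power ideal and quote Lemma \ref{edge} --- the same argument in cleaner packaging.) Where you genuinely diverge is the minimality of $s$. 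The paper argues that if $s$ were not least, a proper subset $\{x_{j_1},\ldots,x_{j_r}\}\subsetneq C$ would satisfy the containment with $r$ least, whence by Lemma \ref{maximal} it would be a maximal strong vertex cover properly contained in the strong vertex cover $C$, a contradiction. You prove irredundancy directly: dropping $x_{i_j}^{w_{i_j}}$ makes $C\setminus\{x_{i_j}\}$ a vertex cover by Lemma \ref{v.c}, hence $x_{i_j}\in L_3^D(C)$ by Lemma \ref{L3}, and strongness of $C$ supplies an edge $(y,x_{i_j})$ with $y\in C$, $w(y)\geq 2$, so that the generator $yx_{i_j}^{w_{i_j}}$ of $I(D)$ is divisible by no surviving pure power. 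This is the same divisibility trick the paper itself uses inside the proof of Lemma \ref{maximal}, but deploying it here buys you something concrete: you never need to re-invoke Lemma \ref{maximal}, and you sidestep the paper's tacit step that a smaller admissible set of variables can be chosen \emph{inside} $C$, which is automatic only when ``least possible'' is read as irredundancy of the displayed generating set (the reading both arguments ultimately certify). Since a proper subset of $C$ satisfies the containment only if some $C\setminus\{x_{i_j}\}$ does, your one-generator-at-a-time check does establish full irredundancy, so the proof is complete and, on this point, slightly tighter than the paper's.
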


\begin{proof}
If  $  I(D) \subseteq (x_{i_1}^{w_{i_1}},\ldots,x_{i_s}^{w_{i_s}})$, where $s$ is the least possible value, then by Lemma \ref{maximal}, $ \{{x_{i_1}},\ldots,{x_{i_s}}\} $ is a maximal strong vertex cover of $ D $.

Now we assume that  $C= \{{x_{i_1}},\ldots,{x_{i_s}}\} $ is a maximal strong vertex cover of $ D $.
We claim  $  I(D) \subseteq (x_{i_1}^{w_{i_1}},\ldots,x_{i_s}^{w_{i_s}})$, where $s$ is the least possible value. Let $x_ix_j^{w_j}  \in I(D)$. If $x_j \in C,$ then  $x_ix_j^{w_j}  \in   (x_{i_1}^{w_{i_1}},\ldots,x_{i_s}^{w_{i_s}}) $. Suppose $x_j \notin C$.  Since $(x_i,x_j) \in E(D)$, $  x_j  \in  N_D^+(x_i)  \cap C^c$ and it implies $x_i  \in  L_1^D(C)$.  If $w(x_i) \neq 1$,
  by  Lemma \ref{maximal3},  there exists a strong vertex cover  $ C^{\prime}$   of $D$  such that  
$C \subsetneq C^{\prime}$, which is a contradiction because $C$  is maximal. So $w(x_i) = 1$ and  $x_ix_j^{w_j}  \in   (x_{i_1}^{w_{i_1}},\ldots,x_{i_s}^{w_{i_s}}) $. Therefore
 $  I(D) \subseteq (x_{i_1}^{w_{i_1}},\ldots,x_{i_s}^{w_{i_s}})$. Suppose  $s$ is not the least possible value.  That means,   $  I(D) \subseteq (x_{j_1}^{w_{j_1}},\ldots,x_{j_r}^{w_{j_r}})$ for some $\{x_{j_1},\ldots,  x_{j_r} \} \subsetneq  \{x_{i_1},\ldots,  x_{i_s} \}   $, where $r  <  s$ and $r$ is the least possible value.  Then by Lemma \ref{maximal}, $\{x_{j_1},\ldots,  x_{j_r} \} $ is a maximal strong vertex cover of $ D $, which is a contradiction because $C$  is maxmal.        
\end{proof}

Next we prove the  equality of ordinary and symbolic powers of edge ideals of some class of weighted rooted trees.

\begin{theorem}\label{tree}  
	Let $ D $ be a weighted  rooted tree with root $x_0$, $\deg_D(x_0) = 1$ and $w(x) \geq 2$ if $\deg_D(x)\geq 2 $ for all $x \in V(D)$.  Then $ {I(D)}^{(s)} = {I(D)}^s $ for all $s \geq   2$.
\end{theorem}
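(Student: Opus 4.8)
The plan is to first normalize weights, then pin down \emph{all} maximal strong vertex covers of $D$, which turn out to be only two. By Lemma~\ref{cor3} I may replace the non-trivial weights of the sink vertices (the leaves) by $1$ without affecting whether $I(D)^{(s)}=I(D)^s$; since every leaf has degree $1$, this preserves the hypothesis that $w(x)\ge 2$ whenever $\deg_D(x)\ge 2$. Write $x_1$ for the unique child of the root $x_0$, so $N_D^+(x_0)=\{x_1\}$. A rooted tree has at most one edge oriented into each vertex, so $D$ satisfies the hypotheses of Theorem~\ref{maximal2}. Using that theorem I would show there are exactly two maximal strong vertex covers, namely $C_0=V(D)\setminus\{x_1\}$ and $C_1=V(D)\setminus\{x_0\}$: for any edge $(x_i,x_j)$ the divisibility $x_ix_j^{w_j}\in(x_{i_1}^{w_{i_1}},\dots,x_{i_s}^{w_{i_s}})$ forces $x_j\in C$ unless $w_i=1$, and among tails $w_i=1$ happens only for $x_i=x_0$ (every other tail is an internal vertex, hence of weight $\ge 2$). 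Thus every maximal strong vertex cover contains $V(D)\setminus\{x_0,x_1\}$ together with at least one of $x_0,x_1$, and the minimality of $s$ in Theorem~\ref{maximal2} forces exactly one of them.

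Next I would compute $I_{\subseteq C_0}$ and $I_{\subseteq C_1}$ from Theorem~\ref{minimalgenerator}. For $C_1$ one finds $L_1^D(C_1)=\emptyset$, $L_2^D(C_1)=\{x_1\}$ and $L_3^D(C_1)=V(D)\setminus\{x_0,x_1\}$, whence
\[
I_{\subseteq C_1}=(x_1^{w_1})+I(D\setminus\{x_0\}).
\]
For $C_0$ one finds $L_1^D(C_0)=\{x_0\}$, $L_2^D(C_0)=N_D^+(x_1)$ and $L_3^D(C_0)=V(D)\setminus(\{x_0,x_1\}\cup N_D^+(x_1))$, whence
\[
I_{\subseteq C_0}=(x_0)+(c^{w_c}\mid c\in N_D^+(x_1))+I(D\setminus\{x_0,x_1\}).
\]
Setting $K=(c^{w_c}\mid c\in N_D^+(x_1))$ and $L=I(D\setminus\{x_0,x_1\})$, one has $I(D\setminus\{x_0\})=x_1K+L$, so $I=(x_0x_1^{w_1})+x_1K+L$, while $I_{\subseteq C_0}=(x_0)+K+L$ has no generator involving $x_1$ and $I_{\subseteq C_1}=(x_1^{w_1})+x_1K+L$ has no generator involving $x_0$.

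By Lemma~\ref{cooper}, $I^{(s)}=(I_{\subseteq C_0})^s\cap(I_{\subseteq C_1})^s$, and since $I\subseteq I_{\subseteq C_0}$ and $I\subseteq I_{\subseteq C_1}$ by Lemma~\ref{edge}, the inclusion $I^s\subseteq I^{(s)}$ is automatic. For the reverse inclusion I would argue on a monomial $f=x_0^{a}x_1^{b}u$ (with $u$ in the remaining variables) lying in both $(I_{\subseteq C_0})^s$ and $(I_{\subseteq C_1})^s$. Membership in $(I_{\subseteq C_1})^s$ writes $f$ as divisible by $p$ factors $x_1^{w_1}$ together with $s-p$ generators of $x_1K+L$; to place $f$ into $I^s=((x_0x_1^{w_1})+x_1K+L)^s$ it suffices to pair each factor $x_1^{w_1}$ with an $x_0$ to form the generator $x_0x_1^{w_1}$, which works verbatim when $p\le a$. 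The role of $(I_{\subseteq C_0})^s$ is to treat the case $p>a$: since its generators omit $x_1$, this membership yields a decomposition of $u$ into $s-a$ generators of $K+L$, and each pure power $c^{w_c}\in K$ can be \emph{upgraded} to the edge generator $x_1c^{w_c}\in x_1K\subset I$ at the cost of one extra $x_1$. Trading surplus $x_1^{w_1}$-factors for such upgraded edge factors then cuts the number of required $x_0$'s down to $a$, yielding $f\in I^s$.

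The main obstacle is exactly the bookkeeping of the $x_1$-exponent in this last step: one must verify that the number $t$ of upgrades demanded by the $C_0$-decomposition never exceeds the surplus $b-w_1a$ of $x_1$'s guaranteed by the $C_1$-decomposition, i.e.\ that $w_1a+t\le b$ can always be achieved by a suitable \emph{simultaneous} choice of the two decompositions. This is a finite combinatorial optimization over the $K/L$-profiles of $u$, and I expect it to proceed exactly in parallel with the monomial count performed for naturally oriented paths in \cite[Theorem~3.6]{kanoy} (compare the closing step of the proof of Theorem~\ref{path5}); once that count is carried out the reverse inclusion, and hence the theorem, follows.
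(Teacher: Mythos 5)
Your structural work matches the paper's proof exactly: the identification of $C_0=V(D)\setminus\{x_1\}$ and $C_1=V(D)\setminus\{x_0\}$ as the only two maximal strong vertex covers via Theorem~\ref{maximal2}, the computation of $I_{\subseteq C_0}=(x_0)+K+L$ and $I_{\subseteq C_1}=(x_1^{w_1})+x_1K+L$ via Theorem~\ref{minimalgenerator}, and the reduction of the problem to $(I_{\subseteq C_0})^s\cap(I_{\subseteq C_1})^s\subseteq I^s$ via Lemma~\ref{cooper} are all exactly what the paper does (your preliminary normalization of leaf weights by Lemma~\ref{cor3} is harmless but unnecessary). However, there is a genuine gap at the crux. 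The containment $(I_{\subseteq C_0})^s\cap(I_{\subseteq C_1})^s\subseteq I^s$ \emph{is} the theorem, and you do not prove it: you describe a pairing/upgrading scheme, concede that its feasibility rests on a bookkeeping inequality ($w_1a+t\le b$, achievable by a ``suitable simultaneous choice'' of the two decompositions) that you never verify, and then defer that verification to an expected analogy with \cite[Theorem 3.6]{kanoy}. The case $p>a$, where the $C_1$-factorization uses more copies of $x_1^{w_1}$ than there are $x_0$'s in $f$, is precisely where the difficulty of the statement is concentrated, and nothing you wrote bounds the number of $K$-factors demanded by the $C_0$-decomposition against the surplus of $x_1$'s; as written, the argument stops at ``I expect the count to work out.''

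The gap can be closed without any global bookkeeping, and this is how the paper does it: induct on $s$. Take $m\in\mathcal{G}(I^{(s)})$ and write $m=\lcm(m_1,m_2)$, where $m_1$ is a product of $s$ generators of $I_{\subseteq C_1}=(x_1^{w_1},x_1x_2^{w_2},\ldots,x_1x_r^{w_r},f_1,\ldots,f_t)$ and $m_2$ is a product of $s$ generators of $I_{\subseteq C_0}=(x_0,x_2^{w_2},\ldots,x_r^{w_r},f_1,\ldots,f_t)$; here $x_2,\ldots,x_r$ are the children of $x_1$ and $f_1,\ldots,f_t$ generate $I(D\setminus\{x_0,x_1\})$. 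The key fact is one you isolated but never exploited: no generator of $I_{\subseteq C_1}$ involves $x_0$ and no generator of $I_{\subseteq C_0}$ involves $x_1$. Consequently, if $m_1$ uses a factor $x_1^{w_1}$ and $m_2$ uses a factor $x_0$, then $x_0x_1^{w_1}\mid m$, and $m/(x_0x_1^{w_1})$ is still divisible both by $m_1/x_1^{w_1}\in(I_{\subseteq C_1})^{s-1}$ and by $m_2/x_0\in(I_{\subseteq C_0})^{s-1}$; hence it lies in $I^{(s-1)}$, which equals $I^{s-1}$ by induction, giving $m\in I^s$. The same division trick with the edge generator $x_1x_i^{w_i}$ applies if $m_2$ instead uses a factor $x_i^{w_i}$ for some $2\le i\le r$. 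In the remaining cases either $m_1$ uses no factor $x_1^{w_1}$, so every factor of $m_1$ is an honest edge generator of $I$ and $m_1\in I^s$; or $m_2$ is a product of $f_j$'s alone and $m_2\in I^s$; either way $m\in I^s$ since both divide $m$. Replacing your final paragraph with this induction completes your proof and makes it coincide with the paper's.
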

\begin{proof}
Let $N_D(x_0)  =  \{x_1\}$ and by definition of $D$, $(x_0,x_1) \in E(D)$. Here $N_D^-(x_1) = \{x_0\}$. Let $N_D^+(x_1) = \{x_2,x_3,\ldots,x_r\}$.   Note that $I(D)  \subseteq  (\{x_i^{w_i}~|~x_i \in V(D)  \setminus  \{x_0\}   \})    $ and  $I(D)  \nsubseteq  (\{x_i^{w_i}~|~x_i \in V(D)  \setminus  \{x_0,x_p\}   \})   $  for any $x_p \in V(D)  \setminus  \{x_0\} $. Thus by Theorem \ref{maximal2}, $V(D)  \setminus  \{x_0\}$ is a maximal strong vertex cover of $D$.  Also $I(D)  \subseteq  (\{x_i^{w_i}~|~x_i \in V(D)  \setminus  \{x_1\}   \})    $ and  $I(D)  \nsubseteq  (\{x_i^{w_i}~|~x_i \in V(D)  \setminus  \{x_1,x_q\}   \})   $  for any $x_q \in V(D)  \setminus  \{x_1\} $. Thus by Theorem \ref{maximal2}, $V(D)  \setminus  \{x_1\}$ is a maximal strong vertex cover  of $D$. Let $C_1= V(D)  \setminus  \{x_0\}$ and  $C_2= V(D)  \setminus  \{x_1\}$.
Suppose there exists a maximal strong vertex cover $C$ of $D$, which contains both $x_0$ and $x_1.$  Then we can write  $C= \{x_0,x_1,{x_{i_1}},\ldots,x_{i_{s-2}}\} $, where    $\{x_{i_1},\ldots,x_{i_{s-2}}  \}  \subseteq [V(D)\setminus \{x_0,x_1\}]$ is some vertex set.  By Theorem \ref{maximal2},  $  I(D) \subseteq (x_0,x_1^{w_1},x_{i_1}^{w_{i_1}},\ldots,x_{i_{s-2}}^{w_{i_{s-2}}})$, where    $|C|=s$ is the least possible value. Since $x_0x_1^{w_1}$ is the only minimal generator of $I(D)$,  which involves  the vertex $x_0$, we have $  I(D) \subseteq (x_1^{w_1},x_{i_1}^{w_{i_1}},\ldots,x_{i_{s-2}}^{w_{i_{s-2}}})$. That means $s$ is not  the least possible value and by Theorem \ref{maximal2}, its a contradiction.
Hence	$C$ is not a maximal strong vertex cover of $D$. Thus $C_1$ and $C_2$ are the only maximal strong vertex covers of $D.$ Let $D^{\prime} = D \setminus \{x_0\}  $ and $D^{\prime\prime} = D  \setminus  \{x_0,x_1\} $ be the induced digraphs of $D.$      
Here $ L_2^D(C_1) = \{x_1\}$, $L_3^D(C_1)  = V(D)  \setminus  \{x_0,x_1\} $ and  $w(x_i) \geq 2$ if $\deg_D(x)\geq 2 $  for all $x_i \in C_1$. Then by Theorem \ref{minimalgenerator}, $I_{\subseteq C_1}  = (x_1^{w_1}) + I(D^{\prime})    $.  Here $ L_1^D(C_2) = \{x_0\}$, $ L_2^D(C_2) = \{x_2,\ldots,x_r\}$, $L_3^D(C_2)  =  V(D)  \setminus  \{x_0,x_1,x_2,  \ldots, x_r\}  $  and  $w(x_i) \geq 2$ if $\deg_D(x)\geq 2 $  for all $x_i \in C_2$. Then by Theorem \ref{minimalgenerator}, $I_{\subseteq C_2}  = (x_0,x_2^{w_2},\ldots,x_r^{w_r}) + I(D^{\prime\prime})    $. Let $I(D^{\prime\prime})  = (f_1,\ldots,f_t)     $. Hence  $I_{\subseteq C_1}  = (x_1^{w_1}) + I(D^{\prime})  =  (x_1^{w_1},x_1x_2^{w_2},\ldots,  x_1x_r^{w_r}, f_1,\ldots,f_t)$ and $I_{\subseteq C_2}  =   (x_0,x_2^{w_2},\ldots,x_r^{w_r}, f_1,\ldots,f_t)$.

By Lemma \ref{cooper}, we have
$   I(D)^{(s)} 
  = (I_{\subseteq C_1})^s \cap (I_{\subseteq C_2})^s   $. It is enough to prove that
$   I(D)^{(s)} 
= (I_{\subseteq C_1})^s \cap (I_{\subseteq C_2})^s \subseteq    I(D)^{s}    $.  We prove this by induction on $s.$ The case for $s=1$ is trivial. Let $m \in \mathcal{G}(I(D)^{(s)}) $. Then $m = \lcm(m_1,m_2)$ for some $m_1 \in \mathcal{G}(( x_1^{w_1},x_1x_2^{w_2},\ldots,  x_1x_r^{w_r}, f_1,\ldots,f_t )$ and $m_2 \in \mathcal{G}((x_0,x_2^{w_2},\ldots,x_r^{w_r}, f_1,\ldots,f_t  )).$
Thus  $ m_1 =  (x_1^{w_1})^{a_1 } (x_1x_2^{w_2})^{a_2}\ldots(x_1x_r^{w_r})^{a_r}(f_1)^{a_{r+1}}\cdots(f_t)^{a_{r+t}}   $  and
$ m_2 =  (x_0)^{b_1 } (x_2^{w_2})^{b_2}\ldots(x_r^{w_r})^{b_r}(f_1)^{b_{r+1}}\\\cdots(f_t)^{b_{r+t}} $ for some $a_i, b_i \geq 0$ with  $\displaystyle{\sum_{i=1}^{r+t} }a_i= s$ and $ \displaystyle{\sum_{i=1}^{r+t} }b_i= s$.  	  
	
\textbf{Case (1)} Assume that $a_1 \neq 0.$

If $b_1 \neq 0,$ then $x_0x_1^{w_1}$ is divisible by $m$ and notice that $\frac{m}{x_0x_1^{w_1}}$ $ \in    (x_1^{w_1},x_1x_2^{w_2},\ldots,  x_1x_r^{w_r}, f_1,\\\ldots,f_t)^{s-1} \cap  (x_0,x_2^{w_2},\ldots,x_r^{w_r}, f_1,\ldots,f_t)^{s-1}= I(D)^{(s-1)}.$ Hence by induction hypothesis	$\frac{m}{x_0x_1^{w_1}}$ $ \in  I(D)^{s-1}$ and so $m \in I(D)^s.$

If $b_2 \neq 0,$ then $x_1x_2^{w_2}$ is divisible by $m$ and observe that $\frac{m}{x_1x_2^{w_2}}$ $ \in   (x_1^{w_1},x_1x_2^{w_2},\ldots,  x_1x_r^{w_r}, f_1,\\\ldots,f_t)^{s-1} \cap  (x_0,x_2^{w_2},\ldots,x_r^{w_r}, f_1,\ldots,f_t)^{s-1} = I(D)^{(s-1)}.$ Hence by induction hypothesis	$\frac{m}{x_1x_2^{w_2}}$ $ \in  I(D)^{s-1}$ and so $m \in I(D)^s.$ Similarly if $b_i \neq 0$ for some $i \in \{3,\ldots,r\}$, we can show    $m \in I(D)^s.$ If $b_1=\cdots=b_r=0,$ then $m_2 \in I(D)^s$ and hence $m \in I(D)^s.$	
	
\textbf{Case (2)} Assume that $a_1 = 0.$
Then $m_1 \in I(D)^s$ and so $m \in I(D)^s.$
\end{proof}
\vspace*{1cm}

%

\textbf{Acknowledgements}
\vspace*{0.15cm}\\
We would like to thanks the anonymous referee for their helpful  suggestions. The first author was supported by SERB(DST) grant No.: $\mbox{MTR}/2020/000429$, India.

\end{document}